\newtheorem{theorem}{Theorem}[section]
\newtheorem{proposition}[theorem]{Proposition}
\newtheorem{lemma}[theorem]{Lemma}
\newtheorem{corollary}[theorem]{Corollary}
\newtheorem{defn}[theorem]{Definition}
\newtheorem{remark}[theorem]{Remark}
\newcommand{\RR}{\mathbb R}
\newcommand{\ZZ}{\mathbb Z}
\newcommand{\CC}{\mathbb C}
\newcommand{\KK}{\mathbb K}
\newcommand{\om}{\omega}
\newcommand{\rank}{\operatorname{rank}}
\newcommand{\im}{\operatorname{Im}}
\def\df={\buildrel {\rm def}\over =}
\begin{document}

\title{{\LARGE {Symplectic singularity of curves with semigroups} {\Large $(4,5,6,7), (4,5,6)$} and \Large $(4,5,7)$ }}
\author{Fausto Assun\c{c}\~{a}o de Brito Lira \thanks{F. Lira has been supported by CAPES, CNPq grant no. 245309/2012-8 and Fapesp grant no. 2012/16426-4.}, Wojciech Domitrz \thanks{W. Domitrz was partially supported by NCN grant no. DEC-2013/11/B/ST1/03080.} \\ and Roberta Wik Atique \thanks{Wik-Atique was partially supported by CNPq grant no. 245309/2012-8 and Fapesp grant no. 2012/16426-4.}}

\maketitle


{\footnotesize{\textbf{Abstract}: We study the local symplectic algebra of curves with semigroups $(4,5,6,7)$, $(4,5,6)$ and $(4,5,7)$.
We use the method of algebraic restrictions to parameterized curves as in \cite{D1}. A new discrete invariant for algebraic restrictions to parameterized quasi-homogeneous curves is introduced. This invariant together with the method of algebraic restriction can distinguish different symplectic orbits of quasi-homogeneous curves.}}


\section{Introduction}

This paper is concerned with the problem of symplectic classification of parameterized quasi-homogeneous curve-germs in the symplectic space $(\KK^{2n},\om)$ $(\KK=\RR \, \text{or} \, \CC)$.

We say that two parameterized curve-germs $f,g:(\KK,0)\to (\KK^{2n},0)$ are \textbf{symplectically equivalent}, or  \textbf{symplectomorphic}, if there exist a germ of diffeomorphism $\phi:(\KK,0)\to (\KK,0)$ and a germ of symplectomorphism $\Phi:((\KK^{2n},\om),0))\to ((\KK^{2n},\om),0)$ such that $\Phi \circ f=g \circ \phi$. By  symplectomorphism we mean a diffeomorphism  which preserves the symplectic form $\om$. We study the action of a subgroup of the classical $\mathcal A$-group where the diffeomorphism in the target is a symplectomorphism.

The problem of symplectic classification of singular varieties was introduced by V.I. Arnold in \cite{A1}. He proved that the $A_{2k}$-singularity of a planar curve (the orbit with respect to the standard $\mathcal A$-equivalence of parameterized curves) splits into exactly $2k+1$ symplectic singularities.
He distinguish the orbits by the orders of tangency of the parameterized curve to the nearest smooth Lagrangian submanifold. Arnold posed a problem of expressing these new symplectic invariants in terms of the local algebra's interaction with the symplectic structure and he proposed to call this interaction the \textbf{local symplectic algebra}.

In \cite{IJ} G. Ishikawa and S. Janeczko classified symplectic singularities of curves in the 2-dimensional symplectic space. A symplectic form on a 2-dimensional manifold is a special case of a volume form on a smooth manifold. The generalization of results in \cite{IJ} to volume-preserving classification of singular varieties and maps in arbitrary dimensions was obtained in \cite{DR}.

A symplectic singularity is stably simple if it is simple and remains simple if the ambient symplectic space is symplectically embedded (i.e. as a symplectic submanifold) into a larger symplectic space. In \cite{K} P. A. Kolgushkin classified the stably simple symplectic singularities of curves (in the $\CC$-analytic category).

A generalization of the Darboux-Givental Theorem (\cite{AG}) to germs of  quasi-homogeneous subsets of the
symplectic space was obtained in \cite{DJZ2} and reduces the problem of symplectic classification of germs of
quasi-homogeneous subsets to the problem of classification of algebraic restrictions of symplectic
forms to these subsets. By this method, complete symplectic classifications
of the $A-D-E$ singularities of planar curves and the $S_5$ singularity were obtained in \cite{DJZ2}.

In \cite{D1} W. Domitrz used the method of algebraic restrictions to study the problem of symplectic classification of quasi-homogeneous parametrized curve-germs. He proved that the vector space of algebraic restrictions of closed 2-forms to the germ of a $\KK$-analytic curve is a finite-dimensional vector space. The  symplectic orbits were distinguished by the symplectic invariants: symplectic multiplicity, index of isotropy and the Lagrangian tangency order.

The method of algebraic restrictions was also applied to the zero-dimensional symplectic isolated complete intersection singularities (see \cite{D2}) and to other 1-dimensional isolated complete intersection singularities: the $S_\mu$ symplectic singularities for $\mu > 5$ in \cite{DT1}, the $T_7-T_8$ symplectic singularities in \cite{DT2}, the $W_8-W_9$ symplectic singularities in \cite{T1} and the $U_7, U_8$ and $U_9$ symplectic singularities in \cite{T2}.

In this paper we use the method of algebraic restrictions described in \cite{D1} to obtain the symplectic classification of all parametrized curve-germs with semigroups $(4,5,6,7), (4,5,6)$ and $(4,5,7)$.
The choice of the semigroup $(4,5,6,7)$ is due to the following reason: this semigroup shows us that the system of symplectic invariants used in \cite{D1} is not enough to distinguish all the symplectic orbits here obtained. 
Then we apply this method to others semigroups as examples.

The contribution of this paper is to introduce a new discrete invariant for algebraic restrictions of $k$-forms to parameterized quasi-homogeneous curves: \textbf{the proportional minimum quasi-degree part}. This is a very natural invariant for algebraic restrictions but the translation of this invariant for the symplectic action is still open.

We  present an algorithm in Singular software \cite{DGPS} to find a basis of the space of algebraic restrictions of closed 2-forms to a germ of parameterized curve.

This paper is organized as follows: In section 2 we recall the method of algebraic restrictions for quasi-homogeneous curves following \cite{D1}. The discrete symplectic invariants appear in section 3. Finally, in section 4, we present the classification.


\section{The method of algebraic restrictions of quasi-homogeneous curves}

In this section we present the method of algebraic restrictions. More details can be found in \cite{DJZ2} and \cite{D1}.

\begin{defn}
A curve-germ $f:(\KK,0)\to (\KK^{m},0)$ is \textbf{quasi-homogeneous} if there exist coordinate systems $t$ on $(\KK ,0)$ and $(x_1,\ldots,x_m)$ on $(\KK^{m},0)$ and positive integers $(\lambda_1,\ldots,\lambda_m)$ such that
\[
df \left(t \frac{d}{dt} \right)=E\circ f,
\]
where $E=\sum_{i=1}^{m}\lambda_i x_i \partial / \partial x_i$ is the germ of the Euler vector field on $(\KK^{m},0)$. The coordinate system $(x_1,\ldots,x_m)$ is  also called quasi-homogeneous, and the numbers $(\lambda_1,\ldots,\lambda_m)$ are called \textbf{weights}.
\end{defn}

\begin{defn}
Positive integers $\lambda_1,\ldots ,\lambda_m$ are \textbf{linearly dependent over non-negative integers} if there exist $j$ and non-negative integers $k_i$ for $i\neq j$ such that $\lambda_j=\sum_{i\neq j} k_i\lambda_i$. Otherwise we say that $\lambda_1,\ldots,\lambda_m$ are \textbf{linearly independent over non-negative integers}.
\end{defn}

It is not difficult to show that quasi-homogeneous curves have the following form in quasi-homogeneous coordinates.

\begin{proposition}\label{a-eq}
A curve-germ $f$ is quasi-homogeneous if and only if it is $\mathcal A$-equivalent to
\[
t\to (t^{\lambda_1},\ldots,t^{\lambda_k},0,\ldots,0),
\]
where $\lambda_1 < \cdots < \lambda_k$ are positive integers linearly independent over non-negative integers.
\end{proposition}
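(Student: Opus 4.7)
The plan is to prove both implications. The forward direction (monomial form implies quasi-homogeneous) is immediate: for the monomial curve and $E = \sum_{i=1}^m \lambda_i x_i \partial_{x_i}$ (extending the weights arbitrarily to the zero components), both sides of $df(t \, d/dt) = E \circ f$ reduce to $\sum_{i=1}^k \lambda_i t^{\lambda_i} \partial_{x_i}|_{f(t)}$, so the definition is satisfied.

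For the nontrivial direction, I would begin with quasi-homogeneous coordinates $(x_1, \ldots, x_m)$ and weights $(\lambda_1, \ldots, \lambda_m)$ supplied by the definition. Writing $f = (f_1, \ldots, f_m)$ and expanding $df(t \, d/dt) = E \circ f$ componentwise gives the scalar ODE $t f_i'(t) = \lambda_i f_i(t)$. Since each $\lambda_i$ is a positive integer and $f_i$ is a smooth (resp.\ analytic) germ at $0$, a Taylor/power-series comparison forces all coefficients of $f_i$ other than that of $t^{\lambda_i}$ to vanish, so $f_i(t) = c_i t^{\lambda_i}$ for some $c_i \in \KK$.

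Next I would normalize. After permuting target coordinates to put the zero components last and applying the target dilation $x_i \mapsto c_i x_i$ wherever $c_i \neq 0$, $f$ takes the form $(t^{\lambda_1}, \ldots, t^{\lambda_k}, 0, \ldots, 0)$ with $\lambda_1 \leq \cdots \leq \lambda_k$. Any repeated weight $\lambda_i = \lambda_j$ with $i < j$ can be eliminated by the linear target change $y_j = x_j - x_i$, which zeroes the $j$-th component; iterating, I may assume $\lambda_1 < \cdots < \lambda_k$.

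The main step, and the one I expect to require the most care, is removing non-negative-integer dependencies among the surviving weights. Suppose $\lambda_j = \sum_{i \neq j,\, i \leq k} k_i \lambda_i$ with $k_i \in \NN$. Since $\lambda_j > 0$ the $k_i$ are not all zero, and because the $\lambda_i$ are distinct positive integers, the relation forces either several $k_i$ to be nonzero or a single one to be at least $2$; in either case the monomial $\prod_{i \neq j} x_i^{k_i}$ vanishes to order at least $2$ at the origin, so that $y_j = x_j - \prod_{i \neq j} x_i^{k_i}$, $y_l = x_l$ for $l \neq j$, is a genuine local diffeomorphism of the target. Substituting $f$ yields $y_j \circ f = t^{\lambda_j} - t^{\sum k_i \lambda_i} = 0$, strictly decreasing the number of nonzero components. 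Iterating terminates in finitely many steps and leaves a representative whose nonzero-component weights are strictly increasing and linearly independent over $\NN$. The main technical point to verify is precisely that order-at-least-$2$ estimate on each coordinate change; once that is in hand the rest is bookkeeping.
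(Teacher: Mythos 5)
The paper itself offers no proof of Proposition \ref{a-eq} (it is introduced with ``it is not difficult to show'' and is essentially quoted from \cite{D1}), so there is no proof of record to compare against; judged on its own, your argument is correct and is the expected normalization procedure. Componentwise the quasi-homogeneity equation gives $tf_i'(t)=\lambda_i f_i(t)$, hence $f_i(t)=c_i t^{\lambda_i}$; scaling, permuting, subtracting equal-weight components, and then killing each dependency $\lambda_j=\sum_{i\neq j}k_i\lambda_i$ via $y_j=x_j-\prod_{i\neq j}x_i^{k_i}$ terminates because every step strictly decreases the number of nonzero components, and what remains is by construction a set of weights independent over the non-negative integers. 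The point you single out as delicate is indeed fine: once the surviving weights are distinct, a relation with $\sum_i k_i=0$ would force $\lambda_j=0$ and one with $\sum_i k_i=1$ would force two equal weights, so $\sum_i k_i\geq 2$ and the monomial lies in the square of the maximal ideal, making the substitution tangent to the identity; in fact even a degree-one monomial would still give a linear isomorphism, exactly as in your repeated-weight step, so nothing can fail there. The only place to add a sentence is the ODE step in the smooth real category: vanishing of all Taylor coefficients other than the $\lambda_i$-th does not by itself exclude a flat remainder, but the remainder again satisfies $tf'=\lambda_i f$, whose solutions on each half-line are scalar multiples of $t^{\lambda_i}$, so a flat solution vanishes identically; in the analytic or holomorphic setting your power-series comparison is already complete.
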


The integers $\lambda_1,\ldots,\lambda_k$ generate the semigroup of the curve $f$, which we denote by $(\lambda_1,\ldots,\lambda_k)$.


\begin{defn}
The germ of a function, of a differential $k$-form, or a vector field $\alpha$ on $(\KK^m,0)$ is \textbf{quasi-homogeneous} in a coordinate system $(x_1,\ldots,x_m)$ on $(\KK^m,0)$ with positive weights $(\lambda_1,\ldots,\lambda_m)$ if ${\mathcal L}_E\alpha=\delta \alpha$, where $E=\sum_{i=1}^{m}\lambda_ix_i\partial / \partial x_i$ is  the germ of the Euler vector field on $(\KK^m,0)$ and $\delta$ is a real number called the \textbf{quasi-degree}.
\end{defn}

It is easy to show that $\alpha$ is quasi-homogeneous in a coordinate system $(x_1,\ldots,x_m)$ with weights $(\lambda_1,\ldots,\lambda_m)$ if and only if $F_{t}^{*}\alpha=t^{\delta}\alpha$, where $F_{t}(x_1,\ldots,x_m)=(t^{\lambda_1}x_1,\ldots,t^{\lambda_m}x_m)$. Thus germs of quasi-homogeneous functions of quasi-degree $\delta$ are germs of weighted homogeneous polynomials of degree $\delta$. The coefficient $f_{i_1,\ldots,i_k}$ of the quasi-homogeneous differential $k$-form $\sum f_{i_1,\ldots,i_k}dx_{i_1}\wedge \cdots \wedge dx_{i_k}$ of quasi-degree $\delta$ is a weighted homogeneous polynomial of degree $\delta - \sum_{j=1}^{k}\lambda_{i_j}$. The coefficient $f_i$ of the quasi-homogeneous vector field $\sum_{i=1}^{m}f_i\partial / \partial x_i$ of quasi-degree $\delta$ is a weighted homogeneous polynomial of degree $\delta + \lambda_i$.



\begin{proposition}\label{prop2.5}(\cite{DJZ2})
If $X$ is the germ of a quasi-homogeneous vector field of quasi-degree $i$ on $(\KK^{m},0)$ and $\om$ is the germ of a quasi-homogeneous differential $k$-form of quasi-degree $j$ on $(\KK^{m},0)$ then ${\mathcal L}_X\om$ is the germ of a quasi-homogeneous differential $k$-form of quasi-degree $i+j$ on $(\KK^m,0)$.
\end{proposition}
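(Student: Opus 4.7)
The plan is to reduce the claim to the defining equation $\mathcal{L}_E\alpha=\delta\alpha$ for quasi-homogeneity and then exploit the way Lie derivatives commute. Concretely, one needs to verify that $\mathcal{L}_E(\mathcal{L}_X\omega)=(i+j)\mathcal{L}_X\omega$, where $E$ is the Euler field associated with the weights $(\lambda_1,\ldots,\lambda_m)$.

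First I would recall the standard commutator identity
\[
[\mathcal{L}_E,\mathcal{L}_X]=\mathcal{L}_{[E,X]},
\]
valid for any pair of smooth vector fields acting on any differential form. Next I would translate the two quasi-homogeneity hypotheses into usable equations. For the form this is immediate: $\mathcal{L}_E\omega=j\omega$. For the vector field, I would note that the Lie derivative of a vector field equals its Lie bracket, so the hypothesis $\mathcal{L}_E X=iX$ is precisely $[E,X]=iX$.

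Putting these two ingredients together gives the one-line computation
\[
\mathcal{L}_E(\mathcal{L}_X\omega)=\mathcal{L}_X(\mathcal{L}_E\omega)+\mathcal{L}_{[E,X]}\omega
=\mathcal{L}_X(j\omega)+\mathcal{L}_{iX}\omega=(j+i)\mathcal{L}_X\omega,
\]
which is exactly the defining identity for $\mathcal{L}_X\omega$ to be quasi-homogeneous of quasi-degree $i+j$.

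There is essentially no genuine obstacle here: the only conceptual point worth pausing on is the fact that, in the paper's uniform definition of quasi-homogeneity for functions, forms, and vector fields, the equation $\mathcal{L}_E X=iX$ for a vector field is literally the bracket identity $[E,X]=iX$, which is what makes the commutator $[\mathcal{L}_E,\mathcal{L}_X]$ collapse to $i\mathcal{L}_X$ on $\omega$. One could alternatively argue via the rescaling diffeomorphisms $F_t(x_1,\ldots,x_m)=(t^{\lambda_1}x_1,\ldots,t^{\lambda_m}x_m)$ and the naturality formula $F_t^*(\mathcal{L}_X\omega)=\mathcal{L}_{F_t^*X}(F_t^*\omega)$, but the Lie-bracket route above is the shortest and stays entirely inside the framework already set up in the section.
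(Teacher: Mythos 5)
Your argument is correct, and it is worth noting that the paper itself gives no proof of this proposition: it is quoted from [DJZ2], so there is no in-text argument to compare against. Your commutator computation is the standard self-contained proof, and every step checks out: the identity $[\mathcal{L}_E,\mathcal{L}_X]=\mathcal{L}_{[E,X]}$ holds for arbitrary vector fields, the paper's uniform definition of quasi-homogeneity does give $\mathcal{L}_EX=[E,X]=iX$ for the vector field (consistent with the paper's remark that the coefficient of $\partial/\partial x_l$ has weighted degree $i+\lambda_l$), and linearity of $\mathcal{L}_{(\cdot)}\omega$ over the constant $i$ justifies $\mathcal{L}_{iX}\omega=i\mathcal{L}_X\omega$, yielding $\mathcal{L}_E(\mathcal{L}_X\omega)=(i+j)\mathcal{L}_X\omega$ as required. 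Your alternative route via the rescalings $F_t$ also works, since $F_t^*X=t^iX$ and $F_t^*\omega=t^j\omega$ combined with naturality of the Lie derivative give $F_t^*(\mathcal{L}_X\omega)=t^{i+j}\mathcal{L}_X\omega$, which is the paper's stated equivalent characterization of quasi-homogeneity; either version is a complete proof.
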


Given the germ of a smooth manifold $(M,p)$ denote by ${\Lambda}^{k}(M)$ the space of all germs at $p$ of differential $k$-forms on $M$. Given a curve-germ $f:(\KK,0)\to (M,p)$ we introduce the following subspaces of ${\Lambda}^{k}(M)$:
\[
\begin{array}{c}
\Lambda_{\im(f)}^{k}(M)=\{\om\in {\Lambda}^k (M): \om(f(t))=0\,\ \text{for any}\,\ t\in (\KK,0)\},\vspace{0.2cm}\\
{\mathscr A}_{0}^{k}(Im(f),M)=\{\alpha + d\beta:\,\ \alpha \in {\Lambda}_{\im(f)}^{k}(M),\, \beta \in {\Lambda}_{\im(f)}^{k-1}(M)\}.\\
\end{array}
\]

\begin{defn}
Let $f:(\KK,0)\to (M,p)$ be a curve-germ and  $\om \in {\Lambda}^{k}(M)$. The \textbf{algebraic restriction} of $\om$ to $\im(f)$ is the equivalence class of $\om$ in ${\Lambda}^{k}(M)$, where the equivalence is as follows: $\om$ is equivalent to $\tilde{\om}$ if $\om - \tilde{\om}\in \mathscr{A}_{0}^{k}(Im(f),M)$. We denote the algebraic restriction of $\om$ to $\im(f)$ as $[\om]_f$.
\end{defn}


The next result provides some basic properties about the set of zero algebraic restrictions to a parameterized curve-germ.

\begin{proposition}(\cite{DJZ2})\label{propalgres}
If $\om\in {\mathscr A}_{0}^{k}(\im(f),M)$ then $d\om\in {\mathscr A}_{0}^{k+1}(\im(f),M)$ and $\om\wedge \alpha \in {\mathscr A}_{0}^{k+p}(\im(f),M)$ for any $p$-form $\alpha$ on $M$.
\end{proposition}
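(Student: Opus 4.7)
The plan is a direct definition-chase relying on two facts: $d^2=0$ and the graded Leibniz rule, together with the observation that $\Lambda_{\im(f)}^{\bullet}(M)$ is a (graded) two-sided ideal for the wedge product, since a form vanishing at every $f(t)$ still vanishes there after wedging with any other form. By hypothesis, write $\om = \eta + d\be$ with $\eta \in \Lambda_{\im(f)}^{k}(M)$ and $\be \in \Lambda_{\im(f)}^{k-1}(M)$; I rename the ``$\alpha$'' from the definition of $\mathscr{A}_0^k$ to $\eta$ to avoid clashing with the $\alpha$ in the proposition.

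For the first assertion, apply $d$ to the decomposition and use $d^2=0$:
$$d\om = d\eta + d(d\be) = d\eta = 0 + d\eta.$$
The zero form is trivially in $\Lambda_{\im(f)}^{k+1}(M)$ and $\eta \in \Lambda_{\im(f)}^{k}(M)$, so this presentation exhibits $d\om$ as an element of $\mathscr{A}_0^{k+1}(\im(f),M)$.

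For the second assertion, let $\alpha$ be an arbitrary $p$-form on $M$. Expand
$$\om \wedge \alpha = \eta \wedge \alpha + d\be \wedge \alpha,$$
and apply the graded Leibniz rule $d(\be \wedge \alpha) = d\be \wedge \alpha + (-1)^{k-1}\be \wedge d\alpha$ to rewrite $d\be \wedge \alpha = d(\be \wedge \alpha) + (-1)^{k}\be \wedge d\alpha$. Collecting,
$$\om \wedge \alpha = \bigl(\eta \wedge \alpha + (-1)^{k}\be \wedge d\alpha\bigr) + d(\be \wedge \alpha).$$
Since $\eta$ and $\be$ both vanish at every $f(t)$, the same holds for $\eta \wedge \alpha$, $\be \wedge d\alpha$, and $\be \wedge \alpha$; hence the parenthesized term lies in $\Lambda_{\im(f)}^{k+p}(M)$ while $\be \wedge \alpha$ lies in $\Lambda_{\im(f)}^{k+p-1}(M)$. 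This is exactly the form required for membership in $\mathscr{A}_0^{k+p}(\im(f),M)$.

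There is no genuine obstacle here; the proof is a bookkeeping exercise. The only points requiring care are keeping the Leibniz sign straight and recognizing each summand as either an element of some $\Lambda_{\im(f)}^{\bullet}(M)$ (via the ideal property) or as the exterior derivative of such an element.
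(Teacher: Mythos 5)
Your proof is correct: the decomposition $\om=\eta+d\be$, the use of $d^2=0$, the Leibniz sign $d\be\wedge\alpha=d(\be\wedge\alpha)+(-1)^{k}\be\wedge d\alpha$, and the observation that $\Lambda_{\im(f)}^{\bullet}(M)$ is a graded ideal under the wedge product (a form vanishing pointwise on $\im(f)$ still vanishes there after wedging with anything) are exactly what is needed. The paper itself gives no proof, since the proposition is quoted from \cite{DJZ2}; your argument is the standard one and matches the expected proof, so nothing further is required.
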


In particular, the exterior derivative and the wedge product for algebraic restriction to a parametrized curve-germ are well defined.

Let $(M,p)$ and $(\widetilde{M},\tilde{p})$ be germs of smooth equi-dimensional manifolds. Let $f:(\KK,0)\to (M,p)$ and $\tilde{f}:(\KK,0)\to (\tilde{M},\tilde{p})$ be curve-germs. Let $\om$ and $\tilde{\om}$ be germs of $k$-forms on $(M,p)$ and $(\widetilde{M},\tilde{p})$, respectively.

\begin{defn}
Two algebraic restrictions $[\om]_f$ and $[\tilde{\om}]_{\tilde{f}}$ are \textbf{diffeomorphic} if there exist germs of diffeomorphisms $\Phi :(\widetilde{M},\tilde{p})\to (M,p)$ and $\phi:(\KK,0)\to(\KK,0)$ such that $\Phi\circ \tilde{f}\circ \phi^{-1}=f$ and ${\Phi}^{*}([\om]_f):=[{\Phi}^{*}\om]_{{\Phi}^{-1}\circ f}=[\tilde{\om}]_{\tilde{f}\circ \phi^{-1}}=[\tilde{\om}]_{\tilde{f}}$.
\end{defn}

\begin{remark}
The above definition does not depend of the choice of the representatives $\om$ and $\tilde{\om}$ since a local diffeomorphism maps forms with zero algebraic restriction to $f$ to forms with zero algebraic restrictions to $\tilde{f}$.
\end{remark}

The method of algebraic restrictions is based on the following result.

\begin{theorem}(Theorem A in \cite{DJZ2})\label{teo35}
Let $f:(\KK,0)\to (\KK^{2n},0)$ be the germ of a quasi-homogeneous curve. If $\om_0,\om_1$ are germs of symplectic forms on $(\KK^{2n},0)$ with the same algebraic restriction to $f$ then there exists a germ of a diffeomorphism $\Phi:(\KK^{2n},0)\to (\KK^{2n},0)$ such that $\Phi\circ f=f$ and $\Phi^{*}\om_1=\om_0$.
\end{theorem}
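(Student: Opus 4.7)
The plan is a Moser-type homotopy argument adapted to the quasi-homogeneous setting. Set $\om_t = (1-t)\om_0 + t\om_1$ for $t \in [0,1]$. Each $\om_t$ is closed, and by openness of non-degeneracy each remains symplectic on a neighbourhood of the origin. I would look for a time-dependent vector field $X_t$ on $(\KK^{2n},0)$ that vanishes identically on $\im(f)$ and satisfies the Moser equation $d(i_{X_t}\om_t) = \om_0 - \om_1$. Its flow $\Phi_t$ then fixes every point of $\im(f)$, so in particular $\Phi_1\circ f = f$, while the usual computation $\tfrac{d}{dt}\Phi_t^*\om_t = 0$ gives $\Phi_1^*\om_1 = \om_0$, and $\Phi := \Phi_1$ is the desired diffeomorphism.

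By hypothesis, $\om_1 - \om_0 = \alpha + d\beta$ with $\alpha \in \Lambda^2_{\im(f)}(\KK^{2n})$ and $\beta \in \Lambda^1_{\im(f)}(\KK^{2n})$; since $\om_0$ and $\om_1$ are closed, so is $\alpha$. Constructing $X_t$ thus reduces to a relative Poincar\'{e} lemma: produce a 1-form $\gamma$ with $d\gamma = \alpha$ and $\gamma|_{\im(f)} = 0$. Given such $\gamma$, set $\eta := -(\beta + \gamma)$; then $d\eta = \om_0 - \om_1$ and $\eta|_{\im(f)} = 0$, and the unique $X_t$ determined by $i_{X_t}\om_t = \eta$ must vanish on $\im(f)$ because $\om_t$ is non-degenerate there.

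For the relative Poincar\'{e} step I would exploit the quasi-homogeneous structure. Let $E = \sum_{i=1}^{2n}\lam_i x_i \,\partial/\partial x_i$ be the Euler field and $F_s(x) = (s^{\lam_1}x_1,\ldots,s^{\lam_{2n}}x_{2n})$ its flow. The defining relation $df(t\,d/dt) = E\circ f$ of quasi-homogeneity forces $F_s\circ f(u) = f(su)$, so $F_s$ preserves $\im(f)$. Set
\[
K\alpha := \int_0^1 F_s^*(i_E\alpha)\,\frac{ds}{s}.
\]
Combining $\tfrac{d}{ds}F_s^*\alpha = \tfrac{1}{s}F_s^*(\mathcal{L}_E \alpha) = \tfrac{1}{s}F_s^*(d i_E\alpha + i_E d\alpha)$ with $d\alpha = 0$ and integrating on $[0,1]$ yields $\alpha = d(K\alpha)$. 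Because $\alpha$ vanishes pointwise on $\im(f)$, so does $i_E\alpha$, and since $F_s$ preserves $\im(f)$ the integrand restricts to zero along $\im(f)$ for every $s$; hence $\gamma := K\alpha$ has the required vanishing.

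The main obstacle I anticipate is the convergence and smoothness of the integral defining $K\alpha$ as $s \to 0^+$. Strict positivity of all weights $\lam_i > 0$ ensures $F_s^*\alpha \to 0$ at every fixed point, but one still needs to decompose $\alpha$ into quasi-homogeneous components of strictly positive total quasi-degree and bound them uniformly; this verification is standard in the analytic category of \cite{DJZ2} but requires care. Once $\gamma$ is produced, integrating $X_t$ gives the flow $\Phi_t$ and completes the argument.
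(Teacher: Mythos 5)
Your overall strategy -- the quasi-homogeneous homotopy operator $K$ giving a relative Poincar\'{e} lemma with control on $\im(f)$, followed by Moser's homotopy method with a vector field vanishing on $\im(f)$ -- is the strategy of the source the paper cites for this statement (the paper itself gives no proof, quoting Theorem A of \cite{DJZ2}), and the details you give for that part are sound: $F_s$ preserves $\im(f)$, $\gamma=K\alpha$ vanishes on $\im(f)$, convergence is governed by the strictly positive quasi-degrees, and a flow of a field vanishing on $\im(f)$ fixes the curve pointwise. The genuine gap is at the very first step: the claim that each $\om_t=(1-t)\om_0+t\om_1$ ``remains symplectic on a neighbourhood of the origin by openness of non-degeneracy.'' Openness helps only if $\om_t(0)$ is non-degenerate, and equality of algebraic restrictions does not control the values at $0$: it only gives $\om_1(0)-\om_0(0)=d\beta(0)$ with $\beta\in\Lambda^1_{\im(f)}$, and such $d\beta(0)$ can be an arbitrary $2$-form in directions transverse to the curve. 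Concretely, take $g(t)=(t^4,t^5,t^6,t^7,0,0)$ in $\RR^6$, $\om_0=dx_1\wedge dx_2+dx_3\wedge dx_4+dx_5\wedge dx_6$, $\beta=-2x_5\,dx_6$ (which vanishes on $\im(g)$) and $\om_1=\om_0+d\beta$. Both forms are symplectic and have the same algebraic restriction to $g$, yet $\om_{1/2}=dx_1\wedge dx_2+dx_3\wedge dx_4$ is degenerate on a whole neighbourhood of $0$, so the equation $i_{X_t}\om_t=\eta$ is not solvable at $t=\tfrac12$ and the flow argument collapses.

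So the linear segment cannot be used as it stands, and this is not a technicality about bounding the integral defining $K\alpha$ (which you flag as the main obstacle) but a missing idea. What is needed before running Moser is a preliminary normalization in the spirit of the Darboux--Givental theorem \cite{AG}: either first apply a local diffeomorphism $\Psi$ with $\Psi\circ f=f$ that matches the values (suitable jets at $0$) of $\om_1$ and $\om_0$, using that the derivative at $0$ of a diffeomorphism fixing the curve is essentially unconstrained in the directions transverse to it, or otherwise replace the segment by a path of symplectic forms with constant algebraic restriction. Only after such a step is $\om_t$ non-degenerate near $0$ uniformly in $t\in[0,1]$, and then your relative Poincar\'{e} plus Moser argument goes through. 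Handling this degeneration along the path is precisely the delicate part of the proof of Theorem A in \cite{DJZ2} that your proposal assumes away.
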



\begin{corollary}\label{corteoa}
The quasi-homogeneous curve-germs $f_1$ and $f_2$ are symplectomorphic in the symplectic space $(\KK^{2n},\om)$ if and only if the algebraic restrictions $[\om]_{f_1}$ and $[\om]_{f_2}$ are diffeomorphic.
\end{corollary}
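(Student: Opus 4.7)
The plan is to derive both directions of the corollary directly from Theorem~\ref{teo35}. The forward direction is a mechanical check that symplectomorphisms transport $\om$ (hence its algebraic restriction) in the expected way; the substance lies in the backward direction, where Theorem~\ref{teo35} supplies the rigidity needed to upgrade a diffeomorphism of algebraic restrictions to a genuine symplectomorphism.

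For the forward direction, assume $f_1$ and $f_2$ are symplectomorphic, so there exist a diffeomorphism $\phi:(\KK,0)\to(\KK,0)$ and a symplectomorphism $\Phi$ of $(\KK^{2n},\om)$ with $\Phi\circ f_1=f_2\circ\phi$. Since $\Phi^{*}\om=\om$, unraveling the definition of the pullback of an algebraic restriction gives $\Phi^{*}([\om]_{f_2})=[\Phi^{*}\om]_{\Phi^{-1}\circ f_2}=[\om]_{f_1\circ\phi^{-1}}$. The space $\mathscr{A}_{0}^{k}(\im(f),M)$, and hence $[\om]_f$, depends on $f$ only through its image, so $[\om]_{f_1\circ\phi^{-1}}=[\om]_{f_1}$, showing that $[\om]_{f_1}$ and $[\om]_{f_2}$ are diffeomorphic.

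For the backward direction, assume $[\om]_{f_1}$ and $[\om]_{f_2}$ are diffeomorphic and pick witnesses $\Phi$, $\phi$ from the definition, so that $\Phi\circ f_2\circ\phi^{-1}=f_1$ and $[\Phi^{*}\om]_{\Phi^{-1}\circ f_1}=[\om]_{f_2}$. Because $\Phi^{-1}\circ f_1=f_2\circ\phi^{-1}$ has the same image as $f_2$, this identity reduces to $[\Phi^{*}\om]_{f_2}=[\om]_{f_2}$. Now $\Phi^{*}\om$ and $\om$ are two germs of symplectic forms on $(\KK^{2n},0)$ with the same algebraic restriction to the quasi-homogeneous curve $f_2$, so Theorem~\ref{teo35} produces a diffeomorphism $\Psi$ of $(\KK^{2n},0)$ with $\Psi\circ f_2=f_2$ and $\Psi^{*}\om=\Phi^{*}\om$. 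The composition $\Phi\circ\Psi^{-1}$ is then a symplectomorphism of $(\KK^{2n},\om)$, and $(\Phi\circ\Psi^{-1})\circ f_2=\Phi\circ f_2=f_1\circ\phi$, exhibiting the sought symplectic equivalence of $f_2$ and $f_1$.

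The only delicate point is bookkeeping: at every step one must remember that algebraic restrictions depend on $f$ only through $\im(f)$, and one has to compose $\Phi$ with the corrector $\Psi$ supplied by Theorem~\ref{teo35} in the correct order, so that both the symplectic identity $(\Phi\circ\Psi^{-1})^{*}\om=\om$ and the curve identity hold simultaneously. Beyond this, no further tool is needed, which is exactly why the statement is packaged as a corollary.
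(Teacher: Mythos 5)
Your argument is correct and is precisely the derivation the paper intends: the corollary is stated without proof as an immediate consequence of Theorem~\ref{teo35}, and your two directions (transporting $[\om]$ by the symplectomorphism, then correcting the diffeomorphism $\Phi$ by the map $\Psi$ supplied by Theorem~\ref{teo35}, using that algebraic restrictions depend on the curve only through its image) are the same manipulations the paper itself carries out later in the proof of Theorem~\ref{cla4,5,6,7}. No gaps.
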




Theorem \ref{teo35} reduces the symplectic classification of curve-germs which are diffeomorphic to some fixed quasi-homogeneous curve-germ to the classification of algebraic restrictions with symplectic representative to this fixed curve-germ.

Let $f$ be a parameterized curve-germ. Every algebraic restriction to $f$ of a closed 2-form has a symplectic representative due to the following result.

\begin{proposition}(\cite{DJZ2})\label{mergsymp}
Let $r$ be the minimal dimension of germs of smooth submanifolds of $(\KK^{2n},0)$ containing $\im(f)$. Let $(S,0)$ be one of such germs of dimension $r$. Let $\theta$ be the germ of a closed 2-form on $(\KK^{2n},0)$. Then there exists a germ of a symplectic form $\om$ on $(\KK^{2n},0)$ such that $[\theta]_f=[\om]_f$ if and only if $\rank({\theta}|_{T_0S})\geq 2r-2n$.
\end{proposition}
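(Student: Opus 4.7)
The plan is to establish both implications by analyzing what the equality $\theta - \omega \in \mathscr{A}_{0}^{2}(\im(f),M)$ forces at the origin $0 = f(0)$.

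For necessity, suppose $[\theta]_{f}=[\omega]_{f}$ with $\omega$ symplectic, and write $\theta-\omega=\alpha+d\beta$ with $\alpha\in\Lambda_{\im(f)}^{2}(M)$ and $\beta\in\Lambda_{\im(f)}^{1}(M)$. Since $\alpha(f(t))\equiv 0$ as a tensor, $\alpha(0)=0$. The crux is the lemma $d\beta(0)|_{T_{0}S}=0$. To prove it, use Proposition \ref{a-eq} to take quasi-homogeneous coordinates with $f(t)=(t^{\lambda_{1}},\ldots,t^{\lambda_{r}},0,\ldots,0)$ and $T_{0}S=\text{span}(e_{1},\ldots,e_{r})$; writing $\beta=\sum_{i}b_{i}\,dx_{i}$, the condition $\beta|_{\im(f)}\equiv 0$ forces every $b_{i}$ to lie in $I_{f}$, the ideal of germs vanishing on $\im(f)$. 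I claim $dg(0)\in(T_{0}S)^{\perp}$ for every $g\in I_{f}$: Taylor-expanding $g(f(t))=\sum_{i=1}^{r}(dg(0))_{i}\,t^{\lambda_{i}}+R(t)$, where $R(t)$ collects monomials $c\,t^{\lambda_{j_{1}}+\cdots+\lambda_{j_{s}}}$ with $s\geq 2$, the linear independence of the $\lambda_{i}$ over nonnegative integers ensures no exponent $\lambda_{i}$ is reproduced by such a sum, so identifying the coefficient of $t^{\lambda_{i}}$ in $g\circ f\equiv 0$ gives $(dg(0))_{i}=0$. Hence $db_{i}(0)\in\text{span}(dx_{j}:j>r)$, which implies $d\beta(0)=\sum_{i}db_{i}(0)\wedge dx_{i}$ vanishes on $T_{0}S\times T_{0}S$. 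Therefore $\theta(0)|_{T_{0}S}=\omega(0)|_{T_{0}S}$, and since $\omega(0)$ is nondegenerate, the kernel of its restriction to $T_{0}S$ equals $T_{0}S\cap(T_{0}S)^{\perp_{\omega}}$, of dimension at most $2n-r$, whence $\rank(\theta|_{T_{0}S})\geq r-(2n-r)=2r-2n$.

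For sufficiency, assume $\rank(\theta(0)|_{T_{0}S})\geq 2r-2n$. First, extend the bilinear form $\theta(0)|_{T_{0}S}$ to a nondegenerate bilinear form $\omega_{0}$ on $T_{0}\KK^{2n}$: this is possible by elementary linear algebra precisely because the rank condition says $\dim\ker\theta(0)|_{T_{0}S}\leq 2n-r$, so the kernel can be paired injectively against a complement of $T_{0}S$ and the pairing extended nondegenerately. The difference $\eta:=\omega_{0}-\theta(0)$ is a constant 2-form vanishing on $T_{0}S$, hence $\eta=\sum_{j>r}dx_{j}\wedge\gamma_{j}$ for constant 1-forms $\gamma_{j}$. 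Setting $\beta:=\sum_{j>r}x_{j}\gamma_{j}\in\Lambda_{\im(f)}^{1}(M)$ (since $x_{j}|_{\im(f)}=0$ for $j>r$), one obtains $d\beta(0)=\eta$. Finally, $\omega:=\theta+d\beta$ is closed, satisfies $\omega-\theta=d\beta\in\mathscr{A}_{0}^{2}(\im(f),M)$, and $\omega(0)=\omega_{0}$ is nondegenerate; by openness of nondegeneracy, $\omega$ is a symplectic germ at the origin with $[\omega]_{f}=[\theta]_{f}$.

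The main technical obstacle is the key lemma $d\beta(0)|_{T_{0}S}=0$, which reduces to showing that the linear part of the ideal $I_{f}$ lies in $(T_{0}S)^{\perp}$. This genuinely exploits the quasi-homogeneous structure via the linear independence of the weights over nonnegative integers; without this hypothesis the coefficient-matching argument could be spoiled by higher-order cross terms. The degenerate case $\lambda_{1}=1$ (which forces $r=1$ and makes $f$ an embedding with $S=\im(f)$) is immediate and handled separately.
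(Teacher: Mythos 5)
Your argument is essentially correct, but be aware that the paper itself offers no proof of Proposition \ref{mergsymp}: it is quoted from \cite{DJZ2}, where it is established for arbitrary subset-germs of $(\KK^{2n},0)$, not only for quasi-homogeneous curves. Your two halves are sound: for necessity, $\alpha(0)=0$ and $d\beta(0)|_{T_0S}=0$ give $\theta(0)|_{T_0S}=\om(0)|_{T_0S}$, whose kernel has dimension at most $2n-r$; for sufficiency, the extension criterion $\dim\ker\bigl(\theta(0)|_{T_0S}\bigr)\le 2n-r$ is exactly the right linear algebra, and correcting $\theta$ by $d\bigl(\sum_{j>r}x_j\gamma_j\bigr)$, a form with zero algebraic restriction, produces the symplectic representative. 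The genuine difference lies in your key lemma, $dg(0)\in(T_0S)^{\perp}$ for every $g$ vanishing on $\im(f)$: you prove it by matching the coefficient of $t^{\lambda_i}$ after normalizing $f$ via Proposition \ref{a-eq}, which uses quasi-homogeneity and the independence of the weights over non-negative integers, whereas in \cite{DJZ2} the same lemma follows from minimality of $r$ alone — if $dg(0)|_{T_0S}\neq 0$ then $g|_S$ is submersive at $0$, so $S\cap\{g=0\}$ is a smooth $(r-1)$-dimensional germ containing $\im(f)$, contradicting minimality. That general argument is shorter and covers arbitrary parameterized curve-germs, which is the generality in which the proposition is stated and used as a citation; your weight-counting version buys an explicit coordinate picture and, as a by-product, shows that every minimal $S$ has the same tangent space $\mathrm{span}(e_1,\dots,e_r)$ in the normal-form coordinates — a fact you use tacitly when you simultaneously normalize $f$ and assume $T_0S$ is that coordinate span (remember $S$ is given in the statement, not chosen), so you should make it explicit by applying your lemma to defining equations of $S$. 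Finally, the closing remark is superfluous: the case $\lambda_1=1$ needs no separate treatment, since your construction applies verbatim there.
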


The following result shows the reason that this method is very powerful for analytic curves.

\begin{theorem}(\cite{D1})\label{teo2}
Let $C$ be the germ of a $\KK$-analytic curve. Then the space of algebraic restrictions to $C$ of germs of closed 2-forms is a finite-dimensional vector space.
\end{theorem}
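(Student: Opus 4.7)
The plan is to reduce, via the Poincar\'e lemma, to finite dimensionality of an explicit quotient of $\Lambda^1(\KK^m,0)$, and then to bound that quotient using the conductor of the analytic curve $C$.

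First, since the ambient germ $(\KK^m,0)$ is smooth, every closed $2$-form is exact; so I would write $\om = d\eta$ with $\eta\in\Lambda^1$. By the definition of $\mathscr{A}_0^2$, both $\Lambda^2_{\im(f)}$ and $d\Lambda^1_{\im(f)}$ lie in $\mathscr{A}_0^2$, hence the algebraic restriction $[d\eta]_f$ depends only on $\eta$ modulo
\[
K \;=\; \Lambda^1_{\im(f)} \,+\, \bigl\{\eta\in\Lambda^1 : d\eta\in\Lambda^2_{\im(f)}\bigr\}.
\]
A direct check shows that $\eta\mapsto [d\eta]_f$ induces an isomorphism between $\Lambda^1/K$ and the space of algebraic restrictions of closed $2$-forms. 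Hence the theorem reduces to $\dim_\KK \Lambda^1/K < \infty$.

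Since $\Lambda^1$ is a free $\mathcal{O}_{\KK^m,0}$-module of rank $m$, the quotient $\Lambda^1/\mathfrak m^N\Lambda^1$ is finite dimensional over $\KK$ for every $N$, where $\mathfrak m$ is the maximal ideal. Thus it suffices to produce an $N$ with $\mathfrak m^N \Lambda^1\subseteq K$. I would derive such an $N$ from the $\KK$-analyticity of $C$: its local ring $\mathcal{O}_C$ sits as a finite-codimensional subring of its normalization $\tilde{\mathcal{O}}_C$, so there exists $N_0$ with $\mathfrak m^{N_0}\tilde{\mathcal{O}}_C\subseteq\mathcal{O}_C$. Combined with Proposition~\ref{prop2.5} and the Euler identity $\om = \tfrac{1}{\delta}\,d(i_E\om)$ for a closed quasi-homogeneous $2$-form of positive quasi-degree $\delta$, this forces every closed $2$-form of sufficiently high quasi-degree to satisfy $i_E\om\in\Lambda^1_{\im(f)}$, so that $\om\in d\Lambda^1_{\im(f)}\subseteq \mathscr{A}_0^2$; passing through $\eta = i_E\om/\delta$ yields a corresponding cutoff at the level of $1$-forms.

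The main obstacle is extending this graded statement to a general analytic $C$, which need not itself be quasi-homogeneous. I would filter $\Lambda^1$ by the order of vanishing at $0$ and run an induction on this filtration, at each step peeling off the leading quasi-homogeneous component (relative to a fixed weight system adapted to the branches of $C$), handling it by the Euler argument above, and absorbing the lower-order remainder into the inductive hypothesis. This yields a concrete $N$ depending on $N_0$ and the minimal weights of the branches of $C$, after which the desired inclusion $\mathfrak m^N\Lambda^1\subseteq K$ — and with it the finite dimensionality of $\Lambda^1/K$ — is immediate.
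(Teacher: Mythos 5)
Your reduction is sound as far as it goes: the isomorphism between $\Lambda^1/K$ and the space of algebraic restrictions of closed $2$-forms is correct, it does suffice to find $N$ with $\mathfrak m^N\Lambda^1\subseteq K$, and the conductor of the analytic curve is indeed the right input (it is what \cite{D1} uses). The gap is in the one concrete mechanism you offer for killing high-order terms. The claim that a closed $2$-form $\om$ of sufficiently high quasi-degree satisfies $i_E\om\in\Lambda^1_{\im(f)}$ is false: take $f(t)=(t^{\lambda_1},t^{\lambda_2},\ldots)$ and $\om=x_1^{N-1}dx_1\wedge dx_2$; then $i_E\om=x_1^{N-1}(\lambda_1x_1\,dx_2-\lambda_2x_2\,dx_1)$ has coefficients $\lambda_1x_1^{N}$ and $-\lambda_2x_1^{N-1}x_2$, which restrict to $\lambda_1t^{N\lambda_1}\neq0$ and $-\lambda_2t^{(N-1)\lambda_1+\lambda_2}\neq0$ for every $N$, so $\om$ is never in $d\Lambda^1_{\im(f)}$ via its Euler primitive. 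What is true is only the weaker membership $\om\in\Lambda^2_{\im(f)}+d\Lambda^1_{\im(f)}$, and producing the witnessing pair $(\alpha,\beta)$ is exactly where the conductor must actually be used: one must rewrite the coefficient, using $t^j\in f^*\mathcal O_{\KK^m,0}$ for $j\geq N_0$, as a function vanishing on $\im(f)$ plus one divisible by suitable coordinates, and only then integrate by parts (this is the role played by statements like Lemma \ref{lemma6.2} in the quasi-homogeneous case). Your proposal asserts the conclusion of this step (``this forces\ldots'') without performing it, and since this step \emph{is} the theorem, the argument is not complete.

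The passage from the quasi-homogeneous case to a general analytic $C$ is also not available as described. A general analytic branch is not quasi-homogeneous in any coordinates, and for a non-quasi-homogeneous curve the ``leading quasi-homogeneous component'' of a form with respect to an arbitrarily chosen weight system does not interact with $\Lambda^1_{\im(f)}$ degree by degree: Proposition \ref{prop6.5}, which lets one argue on quasi-homogeneous parts separately, is stated and true only for quasi-homogeneous curves. An induction on the order of vanishing can be made to work, but each step has to be closed by an order-of-vanishing estimate along the parameterization itself (again via the conductor), not by the graded Euler identity; as written, the inductive step inherits the false claim from the first paragraph.
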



\begin{theorem}(\cite{D1})\label{corteo2}
The space of algebraic restrictions of closed 2-forms to a parameterized quasi-homogeneous curve-germ $g$ is a finite-dimensional vector space spanned by algebraic restrictions of quasi-homogeneous closed 2-forms of bounded quasi-degrees.
\end{theorem}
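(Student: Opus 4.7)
The finite dimensionality of the space $V$ of algebraic restrictions to $g$ of germs of closed 2-forms is immediate from Theorem~\ref{teo2}, since every parameterized quasi-homogeneous curve is in particular $\KK$-analytic (by Proposition~\ref{a-eq} its image is cut out by monomial relations). The remaining content is that $V$ is spanned by algebraic restrictions of \emph{quasi-homogeneous} closed 2-forms of \emph{bounded} quasi-degree, and I would extract this from the natural $\KK^*$-action generated by the Euler vector field.

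By Proposition~\ref{a-eq} I may fix quasi-homogeneous coordinates $(x_1,\ldots,x_{2n})$ of weights $(\lambda_1,\ldots,\lambda_{2n})$ in which $g(t)=(t^{\lambda_1},\ldots,t^{\lambda_k},0,\ldots,0)$, and write $F_s(x)=(s^{\lambda_1}x_1,\ldots,s^{\lambda_{2n}}x_{2n})$ for the flow of $E=\sum_i\lambda_i x_i\,\partial/\partial x_i$. The identity $F_s\circ g(t)=g(st)$ shows that $\im(g)$ is $F_s$-invariant, so $F_s^*$ preserves each $\Lambda^k_{\im(g)}(M)$, and since $F_s^*$ commutes with $d$ it also preserves $\mathscr{A}_0^k(\im(g),M)$. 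Therefore $F_s^*$ descends to a linear $\KK^*$-action on the finite-dimensional space $V$, under which any quasi-homogeneous form $\eta_\delta$ of quasi-degree $\delta$ transforms by $F_s^*\eta_\delta=s^\delta\eta_\delta$. Standard rational representation theory of $\KK^*$ on a finite-dimensional space then yields a weight-space decomposition $V=\bigoplus_\delta V_\delta$, where $V_\delta=\{v\in V:F_s^*v=s^\delta v\text{ for all }s\}$, and only finitely many $V_\delta$ are nonzero; this already supplies the desired bound on quasi-degrees.

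It remains to identify each $V_\delta$ with the image of quasi-homogeneous closed 2-forms of quasi-degree $\delta$. Given $v\in V_\delta$ lifted to a closed representative $\omega$, decompose analytically $\omega=\sum_{\delta'\ge\delta_0}\omega_{\delta'}$ into quasi-homogeneous components; each $\omega_{\delta'}$ is closed by Proposition~\ref{prop2.5}. Passing the identity $F_s^*\omega=\sum_{\delta'}s^{\delta'}\omega_{\delta'}$ to the quotient $V$ and using $F_s^*v=s^\delta v$, comparison of the coefficients of $s^{\delta'}$ in the power-series identity with values in the finite-dimensional space $V$ gives $[\omega_\delta]_g=v$ and $[\omega_{\delta'}]_g=0$ for $\delta'\ne\delta$.

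The principal obstacle, and the step I would spend most effort on, is precisely justifying this term-by-term comparison: one must verify that the partial sums $[\sum_{\delta'\le N}s^{\delta'}\omega_{\delta'}]_g$ converge to $[F_s^*\omega]_g$ in $V$ as $N\to\infty$, equivalently that $\mathscr{A}_0^2(\im(g),M)$ is closed in a topology for which algebraic restriction is continuous. I would handle this by a Nakayama-type argument applied to the decreasing filtration $V\supset V^{>1}\supset V^{>2}\supset\cdots$, where $V^{>N}$ is the image in $V$ of closed forms of quasi-degree $>N$. Finite dimensionality forces this chain to stabilize, and the Poincar\'e-type identity $\omega_\delta=\delta^{-1}d(\iota_E\omega_\delta)$ for closed quasi-homogeneous $\omega_\delta$ with $\delta>0$, together with the explicit structure of $\Lambda^1_{\im(g)}$ in quasi-homogeneous coordinates, forces the stable value $\bigcap_N V^{>N}$ to vanish. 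Combined with the weight-space decomposition this yields a spanning set of $V$ consisting of algebraic restrictions of quasi-homogeneous closed 2-forms whose quasi-degrees are bounded by the largest weight occurring in $V$.
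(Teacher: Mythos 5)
Note first that the paper does not actually prove this statement: it is imported from \cite{D1}, and the ingredients of the proof given there appear here only as quoted results (Theorem \ref{teo2}, Proposition \ref{propalgres}, Proposition \ref{prop6.5}). Your overall architecture --- finite dimensionality from Theorem \ref{teo2} applied to the monomial model of Proposition \ref{a-eq}, followed by a quasi-homogeneous decomposition induced by the flow $F_s$ of the Euler field --- is the right one, and you correctly isolate the delicate point: passing from the Taylor decomposition $\omega=\sum_\delta\omega_\delta$ of a representative to the identity $[\omega]_g=\sum_\delta[\omega_\delta]_g$ with only finitely many nonzero terms. One logical remark: your appeal to ``rational representation theory of $\KK^*$'' is premature where you make it, since rationality of $s\mapsto F_s^*$ on $V$ (finitely many weights, polynomial matrix entries) is essentially equivalent to the term-by-term statement you are trying to establish; a merely abstract action of $\KK^*$ on a finite-dimensional space need not admit a weight decomposition. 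So the whole weight of the proof falls on your last paragraph.

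That last paragraph contains the genuine gap. First, the space $V$ of algebraic restrictions of \emph{closed} $2$-forms is not a module over the ring of function germs (multiplying a closed form by a function destroys closedness), so a Nakayama-type argument cannot be run directly on your filtration $V\supset V^{>1}\supset\cdots$, and stabilization of that chain does not by itself force the intersection to vanish. Second, the identity $\omega_\delta=\delta^{-1}d(\iota_E\omega_\delta)$ proves that a closed quasi-homogeneous form is \emph{exact}, not that it has zero algebraic restriction: for the latter you would need $\iota_E\omega_\delta\in\Lambda^1_{\im(g)}$, which fails in general. For the cusp $g(t)=(t^2,t^3)$ and $\omega=x_1^N\,dx_1\wedge dx_2$ one computes $\iota_E\omega=x_1^N(2x_1\,dx_2-3x_2\,dx_1)$, which is nonzero along the curve for every $N$; the actual vanishing of $[x_1^N dx_1\wedge dx_2]_g$ for $N\ge 2$ is detected by the ideal of the curve (from $d\bigl(h(x_2^2-x_1^3)\,dx_2\bigr)$ one gets $-3x_1^2h\,dx_1\wedge dx_2\in\mathscr{A}_0^2$), not by the Euler identity. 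The correct mechanism is to work first with algebraic restrictions of \emph{all} $2$-forms: by Proposition \ref{propalgres} with $p=0$, $\mathscr{A}_0^2(\im(g),M)$ is stable under multiplication by functions, so $\Lambda^2(M)/\mathscr{A}_0^2(\im(g),M)$ is a finitely generated module over the local ring, and its finite dimensionality yields ${\mathcal M}^N\Lambda^2(M)\subset\mathscr{A}_0^2(\im(g),M)$ for some $N$ by Nakayama, where $\mathcal M$ denotes the maximal ideal. The tail $\omega-\sum_{\delta\le K}\omega_\delta$ then lies in ${\mathcal M}^N\Lambda^2(M)$ for $K$ large enough and has zero algebraic restriction; each surviving $\omega_\delta$ is closed because $d$ preserves quasi-degree; and Proposition \ref{prop6.5} makes the quasi-homogeneous parts of an algebraic restriction well defined, so that nonzero quasi-homogeneous algebraic restrictions of distinct quasi-degrees are linearly independent and the set of occurring quasi-degrees is bounded. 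With these repairs your argument closes.
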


Let $f:(\KK,0)\to (\KK^{2n},0)$ be a parametrized curve-germ and let $(M,0)$ be a germ of submanifold in $(\KK^{2n},0)$ such that $\im(f)\subset (M,0)$. Due to the following result we can replace the classification of algebraic restrictions to $f$ in $(\KK^{2n},0)$ to the classification of algebraic restrictions to $f$ in $(M,0)$.

\begin{proposition}(\cite{DJZ2})\label{prop3.6}
\begin{description}
	\item[(a)] Let $(M,0)$ be the germ of a smooth submanifold of $(\KK^m,0)$ containing $\im(f)$. Let $\om_1,\om_2$ be germs of $k$-forms on $(\KK^m,0)$.
Then $[\om_1]_f=[\om_2]_f$ if and only if $[\om_1|_{TM}]_f=[\om_2|_{TM}]_f$.
\item[(b)]
Let $f_1,f_2$ be curve-germs in $(\KK^m,0)$ whose images are contained in germs of equi-dimensional smooth submanifolds $(M_1,0), (M_2,0)$ respectively. Let $\om_1,\om_2$ be two germs of $k$-forms on $(\KK^m,0)$. The algebraic restrictions $[\om_1]_{f_1}$ and $[\om_2]_{f_2}$ are diffeomorphic if and only if the algebraic restrictions $[\om_1|_{TM_1}]_{f_1}$
and $[\om_2|_{TM_2}]_{f_2}$ are diffeomorphic.
\end{description}
\end{proposition}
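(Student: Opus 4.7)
My plan is to prove (a) directly using coordinates adapted to $M$, and then to bootstrap (b) out of (a) by producing a diffeomorphism between the submanifolds from an ambient one.

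For (a), the direction $[\om_1]_f=[\om_2]_f\Rightarrow[\om_1|_{TM}]_f=[\om_2|_{TM}]_f$ is routine: if $\om_1-\om_2=\alpha+d\beta$ with $\alpha,\beta$ vanishing pointwise on $\im(f)$, then pulling back by the inclusion $\iota\colon M\hookrightarrow\KK^m$ and using $d\iota^{*}=\iota^{*}d$ gives $\iota^{*}(\om_1-\om_2)=\iota^{*}\alpha+d(\iota^{*}\beta)$, with both summands still vanishing pointwise on $\im(f)\subset M$. The substantive direction is the converse. I would fix local coordinates $(x_1,\ldots,x_r,y_1,\ldots,y_{m-r})$ on $(\KK^m,0)$ with $M=\{y=0\}$, and decompose any $k$-form into a tangential part (involving only $dx_i$'s) and a normal part (each monomial containing some $dy_l$). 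Given $\iota^{*}(\om_1-\om_2)=\alpha'+d\beta'$ on $M$ with $\alpha',\beta'\in\Lambda^{*}_{\im(f)}(M)$, extend $\alpha',\beta'$ to forms $\tilde\alpha,\tilde\beta$ on $\KK^m$ by viewing their coordinate expressions in the ambient space; since the coefficients depend only on $x$ and vanish at the points of $\im(f)\subset\{y=0\}$, these extensions lie in $\Lambda^{*}_{\im(f)}(\KK^m)$. The remaining form $\gamma:=\om_1-\om_2-\tilde\alpha-d\tilde\beta$ satisfies $\iota^{*}\gamma=0$, so its tangential part has the form $\sum_j y_j\rho_j$ (manifestly in $\Lambda^k_{\im(f)}(\KK^m)$) and its normal part is $\sum_l dy_l\wedge\zeta_l$. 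The identity
\[
dy_l\wedge\zeta_l=d(y_l\zeta_l)-y_l\,d\zeta_l
\]
writes the normal part as an element of $\mathscr{A}_0^k(\im(f),\KK^m)$, so $\gamma\in\mathscr{A}_0^k(\im(f),\KK^m)$ and $[\om_1]_f=[\om_2]_f$.

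For (b), given an ambient diffeomorphism $\Phi$ with $\Phi\circ f_2\circ\phi^{-1}=f_1$ realizing the equivalence in $\KK^m$, the image $\Phi(M_2)$ is an $r$-dimensional submanifold-germ through $0$ containing $\im(f_1)$. After a preliminary linear change of coordinates one can arrange that both $M_1$ and $\Phi(M_2)$ are graphs over a common $r$-plane: $M_1=\{y=0\}$ and $\Phi(M_2)=\{y=h(x)\}$ with $h$ vanishing along the projection of $\im(f_1)$. The straightening isotopy $\Psi_s(x,y):=(x,y-s\,h(x))$ then connects $\mathrm{id}$ to a diffeomorphism $\Psi=\Psi_1$ with $\Psi(\Phi(M_2))=M_1$ and $\Psi_s\circ f_1=f_1$ for every $s$. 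Its generating vector field $X_s(x,y)=(0,-h(x))$ vanishes on $\im(f_1)$; since then $i_{X_s}\theta$ and $i_{X_s}d\theta$ lie in $\Lambda^{*}_{\im(f_1)}(\KK^m)$ for any form $\theta$, the Moser identity $\Psi^{*}\om_1-\om_1=\int_0^1\Psi_s^{*}\mathcal{L}_{X_s}\om_1\,ds$ together with $\mathcal{L}_{X_s}\om_1=i_{X_s}d\om_1+d(i_{X_s}\om_1)$ places $\Psi^{*}\om_1-\om_1$ in $\mathscr{A}_0^k(\im(f_1),\KK^m)$. Hence $\Phi':=\Psi\circ\Phi$ still realizes the algebraic-restriction equivalence in $\KK^m$ while additionally sending $M_2$ diffeomorphically onto $M_1$. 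Restricting this ambient identity to $M_2$ and $M_1$ via part (a) yields the desired equivalence with $\tilde\Phi:=\Phi'|_{M_2}\colon M_2\to M_1$. The reverse implication is obtained by extending a submanifold diffeomorphism to an ambient one via a tubular-neighborhood construction and invoking (a) to lift the algebraic-restriction identity.

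The main obstacle is the combinatorial bookkeeping in (a), where the cohomological identity $dy_l\wedge\zeta_l=d(y_l\zeta_l)-y_l\,d\zeta_l$ is essential for absorbing all normal contributions into $\mathscr{A}_0^k$; and in (b), the point that straightening $\Phi(M_2)$ onto $M_1$ necessarily modifies the pulled-back $k$-form, so a Moser-type argument—enabled by choosing the straightening to fix $\im(f_1)$ pointwise—is needed to verify that the algebraic restriction is nevertheless preserved.
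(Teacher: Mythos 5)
Your argument is essentially correct, but note that the paper itself offers no proof to compare against: Proposition \ref{prop3.6} is quoted verbatim from \cite{DJZ2}, so any proof here is necessarily ``external'' to the paper. Your treatment of (a) -- flattening $M$ to $\{y=0\}$, extending $\alpha',\beta'$ by pullback along the projection, and absorbing the normal part via $dy_l\wedge\zeta_l=d(y_l\zeta_l)-y_l\,d\zeta_l$ together with Hadamard division for the tangential part -- is the standard argument and is complete. In (b), the key point you identified (that straightening $\Phi(M_2)$ onto $M_1$ must be done by a diffeomorphism that not only fixes $\im(f_1)$ pointwise but demonstrably preserves the algebraic restriction class, which your shear isotopy with generator vanishing on $\im(f_1)$ and the homotopy formula do guarantee) is exactly the substantive issue, and your verification is sound. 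Two small imprecisions: the normalization making $M_1=\{y=0\}$ and $\Phi(M_2)=\{y=h(x)\}$ is not achievable by a \emph{linear} change alone -- you need a linear change producing a common transversal complement (possible since two $r$-planes always admit a common complement) followed by the shear $(x,y)\mapsto (x,y-h_1(x))$, which is harmless; and the converse of (b), extending $\tilde\Phi:(M_2,0)\to(M_1,0)$ to an ambient germ of diffeomorphism and invoking (a), is only sketched, though for germs this extension is routine. With these caveats your proof is a faithful reconstruction of the kind of argument given in \cite{DJZ2}.
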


The following two results are  very useful to obtain a basis of the space of algebraic restrictions of closed 2-forms to $f$.

\begin{proposition}(\cite{DJZ2})\label{basisclo}
Let $a_1,\ldots,a_k$ be a basis of the space of algebraic restrictions of 2-forms to a quasi-homogeneous parameterized curve-germ $f:(\KK,0)\to (\KK^{m},0)$  satisfying the following conditions:
\begin{description}
  \item[(1)] $da_1=\cdots=da_j=0$,
  \item[(2)] the algebraic restrictions $da_{j+1},\ldots,da_k$ are linearly independent.
\end{description}

Then $a_1,\ldots,a_j$ is a basis of the space of algebraic restrictions of closed 2-forms to $f$.
\end{proposition}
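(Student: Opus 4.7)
\medskip\noindent\textbf{Proof proposal.}
The plan is to split the argument into three steps: linear independence of $a_1,\ldots,a_j$, the fact that $a_1,\ldots,a_j$ already span every closed algebraic restriction, and the verification that each individual $a_i$ with $i\le j$ itself admits a closed representative. Linear independence is immediate, since $a_1,\ldots,a_j$ sit inside the basis $a_1,\ldots,a_k$ by hypothesis.

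For the spanning step I would invoke Proposition~\ref{propalgres}: the class $\mathscr{A}_{0}^{k}(\im(f),M)$ is closed under $d$, so the exterior derivative descends to a well-defined operator on algebraic restrictions. If $\om$ is any closed 2-form and $[\om]_f=\sum_{i=1}^{k}c_i a_i$ in the given basis, applying $d$ yields
\[
0 \;=\; [d\om]_f \;=\; \sum_{i=1}^{k} c_i\,da_i \;=\; \sum_{i=j+1}^{k} c_i\,da_i,
\]
where the last equality uses $da_1=\cdots=da_j=0$. The hypothesised linear independence of $da_{j+1},\ldots,da_k$ then forces $c_{j+1}=\cdots=c_k=0$, so $[\om]_f\in\mathrm{span}(a_1,\ldots,a_j)$.

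The substantive step is the third one, where the quasi-homogeneity of $f$ must enter. Using the finite-dimensionality and quasi-degree grading given by Theorems~\ref{teo2} and~\ref{corteo2}, I would represent each $a_i$ by a quasi-homogeneous 2-form $\om_i$ of positive quasi-degree $\delta_i$. Cartan's formula then reads $\delta_i\om_i=\sheaf{L}_E\om_i=d\iota_E\om_i+\iota_E d\om_i$. The hypothesis $da_i=0$ gives $d\om_i=\al+d\be$ with $\al\in\Lambda^3_{\im(f)}(M)$ and $\be\in\Lambda^2_{\im(f)}(M)$. Since $f$ is quasi-homogeneous, the Euler field $E$ is tangent to $\im(f)$ and its flow preserves $\im(f)$; hence $\iota_E\al$, $\iota_E\be$ and $\sheaf{L}_E\be=\frac{d}{dt}\big|_{t=0}(F_t^*\be)$ all vanish along $\im(f)$. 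Combining $\iota_E d\be=\sheaf{L}_E\be-d\iota_E\be$ with $\iota_E d\om_i=\iota_E\al+\iota_E d\be$, I conclude $\iota_E d\om_i\in\mathscr{A}_{0}^{2}(\im(f),M)$. Therefore $\delta_i\,a_i=[d\iota_E\om_i]_f$, exhibiting $a_i$ as the algebraic restriction of an exact, and a fortiori closed, 2-form.

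The main obstacle is precisely this last step: $da_i=0$ only forces $d\om_i$ to lie in $\mathscr{A}_{0}^{3}(\im(f),M)$, not $\om_i$ itself to be closed, and improving $\om_i$ to a closed representative genuinely uses that the Euler vector field is tangent to $\im(f)$ together with the homotopy identity $\delta_i\om_i=d\iota_E\om_i+\iota_E d\om_i$. This is what ties the proposition to the quasi-homogeneous setting, and without it one would at best obtain the spanning statement for $[\om]_f$ of closed $\om$, not the fact that $a_1,\ldots,a_j$ themselves lie in that space.
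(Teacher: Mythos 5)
Your steps 1 and 2 are correct and complete, and the core idea of step 3 --- the homotopy identity $\mathcal{L}_E\om=d\iota_E\om+\iota_E d\om$ combined with the facts that the flow of the Euler field preserves $\im(f)$ and that $\iota_E$, $\mathcal{L}_E$ and $d$ keep you inside ${\mathscr A}_{0}^{\bullet}(\im(f),M)$ --- is exactly the mechanism that ties the proposition to quasi-homogeneity. (Note that the paper itself does not prove this proposition; it is imported from \cite{DJZ2}, so there is no in-paper argument to compare with.)

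There is, however, a genuine gap in step 3 as written: you assert that each $a_i$ with $i\le j$ can be represented by a quasi-homogeneous $2$-form of some quasi-degree $\delta_i$. The hypothesis only gives an arbitrary basis subject to (1) and (2), and a general algebraic restriction has no quasi-homogeneous representative: for instance, replacing $a_9$ by $a_9+a_{13}^{-}$ in the basis of Proposition \ref{basis4,5,6,7} still satisfies (1) and (2), but $a_9+a_{13}^{-}$ has two distinct nonzero quasi-homogeneous parts, so it cannot equal $a^{(r)}$ for a single $r$. The repair is to prove the stronger statement that \emph{every} algebraic restriction $a=[\om]_f$ with $da=0$ is the restriction of a closed form, by running your computation on the quasi-homogeneous components of an arbitrary representative: by Proposition \ref{prop6.5}, $[d\om^{(r)}]_f=[(d\om)^{(r)}]_f=0$ for every $r$, and your argument then gives $r\,[\om^{(r)}]_f=[d\iota_E\om^{(r)}]_f$ with $r>0$. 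To conclude $a=\sum_r[\om^{(r)}]_f$ as a finite sum you must additionally know that quasi-homogeneous algebraic restrictions of sufficiently high quasi-degree vanish and that the tail of the Taylor series has zero algebraic restriction; this is precisely the finiteness input ($K(f)<\infty$, Theorems \ref{teo2} and \ref{corteo2}) which you cite but do not actually invoke at the point where it is needed. With these two adjustments (componentwise homotopy argument plus the tail estimate), your proof is complete; combined with your step 2 it shows that the restrictions of closed forms, the span of $a_1,\ldots,a_j$, and the kernel of $d$ on restrictions all coincide.
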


Fix a coordinate system  $(x_1,\ldots,x_m)$ in $(\KK^m,0)$.

\begin{lemma}(\cite{D1})\label{lemma6.2}
Let $f$ be a quasi-homogeneous curve on $(\KK^m,0)$. If the monomials $s(x)=\prod_{l=1}^{k}x_{l}^{s_l}$ and $p(x)=\prod_{l=1}^{k}x_{l}^{p_l}$ have the same quasi-degree then the forms $s(x)dx_i\wedge dx_j$ and $p(x)dx_i\wedge dx_j$ have the same algebraic restriction to $f$.
\end{lemma}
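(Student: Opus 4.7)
The plan is to observe that the difference $(s(x)-p(x))\,dx_i\wedge dx_j$ lies in $\Lambda_{\im(f)}^{2}(\KK^m) \subset \mathscr{A}_{0}^{2}(\im(f),\KK^m)$; the asserted equality of algebraic restrictions then follows directly from the definition.

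First I would invoke Proposition \ref{a-eq} to reduce to the case where, in the fixed quasi-homogeneous coordinates $(x_1,\ldots,x_m)$ with weights $(\lambda_1,\ldots,\lambda_m)$, the curve is parameterized by
\[
f(t)=(t^{\lambda_1},\ldots,t^{\lambda_k},0,\ldots,0).
\]
Substituting into the monomials only in $x_1,\ldots,x_k$ gives
\[
s(f(t))=\prod_{l=1}^{k}(t^{\lambda_l})^{s_l}=t^{\sum_{l=1}^{k}s_l\lambda_l},\qquad p(f(t))=t^{\sum_{l=1}^{k}p_l\lambda_l}.
\]
Because $\mathcal L_E x_l=\lambda_l x_l$, the quasi-degree of the monomial $s$ (viewed as a function) is exactly $\sum_{l=1}^{k}s_l\lambda_l$, and similarly for $p$. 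The hypothesis that $s$ and $p$ have the same quasi-degree therefore forces $s(f(t))=p(f(t))$ for every $t$.

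Consequently the function $s-p$ vanishes on $\im(f)$, so at each point $f(t)$ the 2-form $(s(x)-p(x))\,dx_i\wedge dx_j$ evaluates to zero. Hence it belongs to $\Lambda_{\im(f)}^{2}(\KK^m)$, and taking $\beta=0$ in the definition of $\mathscr{A}_{0}^{2}(\im(f),\KK^m)$ places it in that set too. Thus $s(x)\,dx_i\wedge dx_j$ and $p(x)\,dx_i\wedge dx_j$ differ by an element of $\mathscr{A}_{0}^{2}(\im(f),\KK^m)$, which is precisely the equivalence relation defining equal algebraic restrictions to $\im(f)$.

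There is no serious obstacle: once Proposition \ref{a-eq} puts $f$ into its canonical form, the argument is a one-line substitution. The only point worth checking carefully is that the quasi-degree of $\prod_l x_l^{s_l}$, as defined via $\mathcal L_E$, coincides with the weighted sum $\sum_l s_l\lambda_l$; this is immediate from the Leibniz rule for $\mathcal L_E$ applied to a product of coordinate functions.
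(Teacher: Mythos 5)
Your proof is correct and is essentially the standard argument for this lemma (which the paper quotes from [D1] without reproving): for the normal form, equal quasi-degree gives $s\circ f=p\circ f$, so $(s-p)\,dx_i\wedge dx_j$ vanishes at every point of $\im(f)$, lies in $\Lambda^2_{\im(f)}$, and hence has zero algebraic restriction. One small caution: the appeal to Proposition \ref{a-eq} is unnecessary and slightly misleading, since the target diffeomorphism in an $\mathcal A$-equivalence would transform the monomials $s,p$; the lemma is to be read with $f$ already written in quasi-homogeneous coordinates as $f(t)=(t^{\lambda_1},\ldots,t^{\lambda_k},0,\ldots,0)$ (the only way it is used in the paper), and with that understanding your evaluation argument is complete.
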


Let $\om$ be the germ of a $k$-form on $(\KK^m,0)$. We denote by $\om^{(r)}$ the quasi-homogeneous part of quasi-degree $r$ in the Taylor series of $\om$.

\begin{proposition}(\cite{D1})\label{prop6.5}
Let $f$ be a quasi-homogeneous curve in $(\KK^m,0)$. If $[\om]_f=0$ then $[\om^{(r)}]_f=0$ for any $r$.
\end{proposition}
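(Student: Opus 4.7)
The plan is to show that both $\Lambda^k_{\im(f)}(M)$ and $\mathscr A^k_0(\im(f),M)$ are stable under taking quasi-homogeneous components of the Taylor series at $0$; this immediately yields the proposition.

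First, by Proposition~\ref{a-eq}, I would choose quasi-homogeneous coordinates $(x_1,\ldots,x_m)$ in which $f(t)=(t^{\lambda_1},\ldots,t^{\lambda_k},0,\ldots,0)$. Writing $[\om]_f=0$ as $\om=\alpha+d\beta$ with $\alpha\in\Lambda^k_{\im(f)}(M)$ and $\beta\in\Lambda^{k-1}_{\im(f)}(M)$, I would decompose each formally into quasi-homogeneous parts, $\alpha=\sum_r\alpha^{(r)}$ and $\beta=\sum_r\beta^{(r)}$. Since $\mathcal L_E$ commutes with $d$, Proposition~\ref{prop2.5} gives that $d$ preserves quasi-degree, so $\om^{(r)}=\alpha^{(r)}+d\beta^{(r)}$. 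The proposition therefore reduces to the following key step: if $\gamma\in\Lambda^\ell_{\im(f)}(M)$, then $\gamma^{(r)}\in\Lambda^\ell_{\im(f)}(M)$ for every $r$.

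For this key step I would write $\gamma=\sum_I g_I\,dx^I$. Each quasi-homogeneous piece $g_I^{(j)}$ is a weighted homogeneous polynomial of quasi-degree $j$; evaluated at $f(t)$ its monomials in $x_{k+1},\ldots,x_m$ disappear, and every remaining weighted homogeneous monomial of quasi-degree $j$ collapses to a constant times $t^j$. Hence $g_I^{(j)}(f(t))=c_{I,j}\,t^j$, and the Taylor identity
\[
0=g_I(f(t))=\sum_j c_{I,j}\,t^j
\]
forces every $c_{I,j}=0$. Consequently $\gamma^{(r)}_{f(t)}=0$ for all $t$. Applying this to both $\alpha$ and $\beta$ yields $\om^{(r)}=\alpha^{(r)}+d\beta^{(r)}\in\mathscr A^k_0(\im(f),M)$, i.e.\ $[\om^{(r)}]_f=0$.

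The main content of the argument is the observation that pulling back a weighted homogeneous polynomial by a quasi-homogeneous parameterization yields a pure monomial in $t$, so contributions of different quasi-degrees cannot cancel each other when evaluated along $f$. The only subtle point is that the identity $g_I(f(t))=\sum_j c_{I,j}\,t^j$ should be read as equality of formal Taylor series at $t=0$; since $f(0)=0$ this is genuinely the Taylor expansion of the germ $g_I\circ f$, so term-by-term vanishing is automatic. Everything else is pure bookkeeping on quasi-degrees.
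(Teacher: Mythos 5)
Your argument is correct. Note that this proposition is quoted from \cite{D1} and the present paper does not reproduce a proof, so there is no in-paper argument to compare with; your coefficientwise proof is a valid, self-contained one. The mechanism you isolate is exactly the right one: along $f(t)=(t^{\lambda_1},\ldots,t^{\lambda_k},0,\ldots,0)$ every weighted homogeneous piece of quasi-degree $j$ of a coefficient function restricts to $c\,t^j$, so pieces of different quasi-degrees cannot cancel, which shows that $\Lambda^{\ell}_{\im(f)}(M)$ is stable under taking quasi-homogeneous Taylor parts, and since $d$ preserves quasi-degree the same follows for ${\mathscr A}^{k}_{0}(\im(f),M)$; this is in substance the same scaling argument that underlies the proof in \cite{D1} (there organized around the invariance of $\im(f)$ under $F_s(x)=(s^{\lambda_1}x_1,\ldots,s^{\lambda_m}x_m)$ and the expansion of $F_s^*\om$ in powers of $s$). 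Two minor remarks: since $\om^{(r)}$ is only defined after a quasi-homogeneous coordinate system adapted to $f$ is fixed, it is cleaner to work in those fixed coordinates (where, after rescaling the variables, $f$ already has the monomial form) than to invoke $\mathcal A$-equivalence, which could change the coordinates and hence the operation $\om\mapsto\om^{(r)}$; and your reading of $g_I(f(t))=\sum_j c_{I,j}t^j$ as an identity of Taylor series at $t=0$ is the correct way to handle the smooth, non-analytic case.
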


This proposition allows us to define quasi-homogeneous algebraic restriction to a quasi-homogeneous parameterized curve-germ.

\begin{defn}
Let $a=[\om]_f$ be an algebraic restriction to $f$. The algebraic restriction $a^{(r)}=[\om^{(r)}]_f$ is called the \textbf{quasi-homogeneous part} of
\textbf{quasi-degree} $r$ of the algebraic restriction $a$; and $a$ is \textbf{quasi-homogeneous} of quasi-degree $r$ if $a=a^{(r)}$.
\end{defn}

\begin{defn}
Let $f$ be a curve in $(\KK^m,0)$. The germ $X$ of a vector field on $(\KK^m,0)$ is called \textbf{liftable} over $f$ if there exists a function germ $h$ on $(\KK,0)$ such that
\[
h \left( \frac{df}{dt} \right)=X\circ f.
\]
\end{defn}

The tangent space to the orbit of an algebraic restriction to $f$ $a$ is given by ${\mathcal L}_Xa$ for all vector fields $X$ liftable over $f$. The Lie derivative of an algebraic restriction with respect to a liftable vector field is well defined since $[{\mathcal L}_X\theta]_f=0$, for every $k$-form $\theta$ with zero algebraic restriction (Proposition 6.8 \cite{D1}).

Let $f$ be a quasi-homogeneous parameterized curve-germ and let $X$ be a smooth vector field. We denote by $X^{(r)}$ the quasi-homogeneous part of quasi-degree $r$ in the Taylor series of $X$. If $X$ is liftable over $f$ then $X^{(r)}$ is liftable over $f$ (Proposition 6.11 in \cite{D1}).

Let $K(f)$ be the minimal natural number such that all quasi-homogeneous algebraic restrictions to $f$ of closed 2-forms of quasi-degree greater than $K(f)$ vanish. By Theorem \ref{teo2}, $K(f)$ is finite.

\begin{theorem}(\cite{D1})\label{genvect}
Let $f(t)=(t^{\lambda_1},\ldots,t^{\lambda_k},0,\ldots,0)$. Let $X_s$ be the germ of a vector field such that $X_s\circ f=t^{s+1}df/dt$. Then the tangent space of the orbit of the quasi-homogeneous algebraic restriction $a_r$ of quasi-degree $r$ is spanned by ${\mathcal L}_{X_s}a_r$ for $s$ that are $\ZZ_{\geq 0}$-linear combinations of $\lambda_1,\ldots,\lambda_k$ and are smaller than $K(f)-r$.
\end{theorem}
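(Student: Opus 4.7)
The plan is to describe the tangent space at $a_r$ of its orbit as the set of Lie derivatives ${\mathcal L}_X a_r$ for $X$ ranging over vector fields liftable over $f$, and then prune this spanning set using quasi-homogeneity and the definition of $K(f)$.

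First I would decompose an arbitrary liftable vector field into its quasi-homogeneous Taylor parts $X = \sum_s X^{(s)}$. By Proposition 6.11 of \cite{D1} (cited in the paragraph just above the theorem), each $X^{(s)}$ is again liftable, so ${\mathcal L}_X a_r = \sum_s {\mathcal L}_{X^{(s)}} a_r$. By Proposition \ref{prop2.5} each summand is quasi-homogeneous of quasi-degree $s + r$; and since $a_r = [\om]_f$ for some closed 2-form $\om$, the identity ${\mathcal L}_{X^{(s)}} \om = d(\iota_{X^{(s)}} \om)$ coming from $d\om = 0$ shows that ${\mathcal L}_{X^{(s)}} a_r$ is itself an algebraic restriction of a closed 2-form. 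By the definition of $K(f)$, every term with $s + r > K(f)$ therefore vanishes, yielding the cutoff $s \le K(f) - r$.

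Second, I would classify the quasi-homogeneous liftable vector fields of a fixed quasi-degree $s$. The liftability equation $X^{(s)} \circ f = h(t)\, df/dt$, combined with the explicit form $df/dt = (\lambda_1 t^{\lambda_1 - 1}, \ldots, \lambda_k t^{\lambda_k - 1}, 0, \ldots, 0)$ and the fact that each coordinate $X^{(s)}_i$ is weighted-homogeneous of degree $s + \lambda_i$ and hence pulls back to a scalar multiple of $t^{s + \lambda_i}$, forces $h(t) = c\, t^{s+1}$ for a single scalar $c$. Thus up to scaling there is at most one quasi-homogeneous liftable vector field of quasi-degree $s$, namely $X_s$. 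For $s = \sum_j a_j \lambda_j$ a $\ZZ_{\ge 0}$-combination of the generators, I can exhibit $X_s$ concretely as $X_s = \bigl(\prod_j x_j^{a_j}\bigr) E$, where $E = \sum_i \lambda_i x_i\, \partial/\partial x_i$ is the Euler vector field satisfying $E \circ f = t\, df/dt$; a direct computation gives $X_s \circ f = \bigl(\prod_j t^{a_j \lambda_j}\bigr)(E \circ f) = t^{s+1}\, df/dt$, as required.

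The principal obstacle I anticipate is in this second step: carefully matching weighted degrees on both sides of the liftability relation to isolate $h(t) = c\, t^{s+1}$, and then producing $X_s$ explicitly for every $s$ in the semigroup via the Euler-field construction above. The first step, i.e.\ the quasi-homogeneous decomposition together with the $K(f)$ cutoff, is essentially formal once Proposition 6.11 of \cite{D1}, Proposition \ref{prop2.5}, and the defining property of $K(f)$ are in hand.
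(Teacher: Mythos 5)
The paper itself gives no proof of Theorem \ref{genvect} (it is quoted from \cite{D1}), so your argument can only be checked against the statement and the way the paper actually uses it. Your first step is fine: splitting a liftable field into quasi-homogeneous parts (liftable by Proposition 6.11 of \cite{D1}), observing via Proposition \ref{prop2.5} and Cartan's formula that ${\mathcal L}_{X^{(s)}}a_r$ is a quasi-homogeneous algebraic restriction of a closed $2$-form of quasi-degree $s+r$, and cutting off with $K(f)$ (up to the harmless boundary question of $s<K(f)-r$ versus $s\le K(f)-r$). One smaller flaw in step two: ``up to scaling there is at most one quasi-homogeneous liftable vector field of quasi-degree $s$'' is false as stated, since any quasi-homogeneous field whose components vanish on $\im(f)$ is liftable with $h=0$. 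What your degree bookkeeping really shows is $X^{(s)}\circ f=c\,t^{s+1}df/dt=(cX_s)\circ f$; to conclude ${\mathcal L}_{X^{(s)}}a_r=c\,{\mathcal L}_{X_s}a_r$ you still need the fact that a vector field $Y$ with $Y\circ f=0$ acts trivially on algebraic restrictions (Cartan's formula: $\iota_Y\om$ and $\iota_Y d\om$ vanish on $\im(f)$, the analogue of Proposition 6.8 of \cite{D1}). That is true and easy, but it must be said.

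The genuine gap is in your identification of the admissible quasi-degrees $s$. A nonzero lift of quasi-degree $s$ exists precisely when each $t^{s+\lambda_i}$ extends through $f$, i.e.\ when $s+\lambda_i$ lies in the semigroup for every $i\le k$; this does \emph{not} force $s$ itself to be a $\ZZ_{\geq 0}$-combination of $\lambda_1,\ldots,\lambda_k$. For the curve with semigroup $(4,5,6,7)$ the fields $X_1,X_2,X_3$ of quasi-degrees $1,2,3$ exist and are liftable ($X_1\circ g=t^{2}dg/dt$, etc.), their flows are local symmetries of $g$, and by Table \ref{liederivate} one has ${\mathcal L}_{X_1}a_9=5a_{10}$, an element of the tangent space to the orbit of $a_9$ that is not in the span of ${\mathcal L}_{X_s}a_9$ over semigroup values of $s$ (those only give $a_9$, $a_{13}^{-}$, $a_{14}$, $a_{15}^{-}$). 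So exhibiting $X_s=\bigl(\prod_j x_j^{a_j}\bigr)E$ only for $s$ in the semigroup, and treating that as exhaustive, does not span the tangent space. Your argument, once repaired as above, proves the statement with the index set $\{s\le K(f)-r:\ s+\lambda_i\ \text{is in the semigroup for all } i\}$, which is exactly what the paper uses in practice ($s=0,\ldots,6$ in the proof of Proposition \ref{claalg4,5,6,7}); it does not prove the literal statement with $s$ restricted to $\ZZ_{\geq0}$-combinations of the $\lambda_i$, and the discrepancy between that clause and the paper's own tables should be flagged rather than silently absorbed.
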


We use the following result to obtain the normal forms of algebraic restrictions to a parameterized quasi-homogeneous curve-germ under the action of local symmetries.

\begin{theorem}(\cite{D1})\label{teo6.3}
Let $a_1,\ldots,a_p$ be a quasi-homogeneous basis of quasi-degrees $\delta_1\leq \ldots \leq \delta_s < \delta_{s+1} \leq \ldots \leq \delta_{p}$ of the space of algebraic restrictions of closed 2-forms to $f$. Let $a=\sum_{j=s}^{p}c_ja_j$, where $c_j\in \KK$ for $j=s,\ldots,p$ and $c_s\neq 0$. If there exists a liftable quasi-homogeneous vector field $X$ over $f$ such that ${\mathcal L}_Xa_s=ra_k$ for $k>s$ and $r\neq 0$ then $a$ is diffeomorphic to
$\sum_{j=s}^{k-1}c_ja_j+\sum_{j=k+1}^{p}b_ja_j$ for some $b_j\in \KK$, $j=k+1,\ldots,p$.
\end{theorem}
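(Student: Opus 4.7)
The plan is to realise the required equivalence as the time-$1$ flow of a suitable rescaling of $X$. Since ${\mathcal L}_X a_s=r a_k$ with $r\neq 0$ and $a_s,a_k$ are quasi-homogeneous of degrees $\delta_s$ and $\delta_k$, Proposition \ref{prop2.5} forces $X$ itself to be quasi-homogeneous of quasi-degree $\delta:=\delta_k-\delta_s$, which is strictly positive thanks to the hypothesis $\delta_s<\delta_{s+1}\le\delta_k$. Set $\alpha := c_k/(rc_s)$ (well-defined since $c_s,r\neq 0$) and $Y := -\alpha X$; this is again a liftable, quasi-homogeneous vector field of positive quasi-degree, hence vanishing at the origin, so its time-$1$ flow $\Phi := \exp(Y)$ is a well-defined germ of diffeomorphism fixing $0$. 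Liftability of $Y$ yields a germ of reparametrization $\phi$ of $(\KK,0)$ with $\Phi\circ f = f\circ\phi$, so $\Phi$ realises a diffeomorphism of algebraic restrictions in the sense of the Definition.

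The main computation is to evaluate $\Phi^* a$ through the Lie--Taylor expansion $\Phi^* a = \sum_{n\ge 0}\tfrac{1}{n!}{\mathcal L}_Y^n a$. By Proposition \ref{prop2.5}, ${\mathcal L}_Y$ raises quasi-degree by $\delta>0$, and by Theorem \ref{corteo2} the space of algebraic restrictions of closed 2-forms is finite-dimensional, concentrated in bounded quasi-degrees; thus ${\mathcal L}_Y$ is nilpotent on it and the series terminates. For the first-order term, the $c_s a_s$ contribution gives $-\alpha c_s r a_k=-c_k a_k$, which exactly cancels the offending $c_k a_k$ term in $a$. The remaining contributions $-\alpha c_j\, {\mathcal L}_X a_j$ for $j>s$ are quasi-homogeneous of degree $\delta_j+\delta\ge\delta_{s+1}+\delta>\delta_k$ and hence expand in basis elements $a_l$ with $l>k$. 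Similarly, each higher-order term $\tfrac{1}{n!}{\mathcal L}_Y^n a$ for $n\ge 2$ has quasi-degree at least $\delta_s+2\delta=2\delta_k-\delta_s>\delta_k$, contributing only to basis elements of index greater than $k$. Collecting,
\begin{align*}
\Phi^* a = \sum_{j=s}^{k-1} c_j a_j + \sum_{j=k+1}^{p} b_j a_j
\end{align*}
for suitable $b_j\in\KK$, which is the asserted normal form.

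The one step that really requires care is justifying the Lie--Taylor expansion at the level of algebraic restrictions, rather than only as a formal series. This rests on the fact (recalled in the paper just before Theorem \ref{genvect}) that the Lie derivative along a liftable vector field is well defined on algebraic restrictions, combined with the nilpotency of ${\mathcal L}_Y$ on the finite-dimensional space of algebraic restrictions of closed 2-forms coming from the bounded quasi-degree assertion of Theorem \ref{corteo2}; together these turn the a priori formal expansion into a finite sum of genuine algebraic restrictions. Everything else is book-keeping of quasi-degrees, and the strict inequality $\delta_s<\delta_{s+1}$ in the hypothesis is exactly what guarantees that all correction terms introduced by $\Phi$ land strictly above $a_k$ in quasi-degree.
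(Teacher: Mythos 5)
The paper does not actually prove Theorem~\ref{teo6.3}: it is imported verbatim from \cite{D1}, so there is no in-paper argument to compare against. Judged on its own, your proof is correct and complete. The key points are all in place: the quasi-degree of $X$ is forced to be $\delta=\delta_k-\delta_s>0$ by Proposition~\ref{prop2.5}; the time-$1$ flow of $Y=-\tfrac{c_k}{rc_s}X$ is a local symmetry of $f$ because $Y$ is liftable and vanishes at $0$; the Lie--Taylor expansion is legitimate on algebraic restrictions because ${\mathcal L}_Y$ is well defined there, preserves closedness (via $\mathcal L_Y\omega=d\iota_Y\omega$), and is nilpotent on the finite-dimensional space of Theorem~\ref{corteo2}; and the degree bookkeeping correctly shows that the only correction in quasi-degree $\le\delta_k$ is the term $-c_ka_k$ coming from $c_sa_s$, with the strict inequality $\delta_s<\delta_{s+1}$ doing exactly the work you say it does (since the basis is ordered by quasi-degree, ``quasi-degree $>\delta_k$'' does imply ``index $>k$''). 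This is also consistent in spirit with what the authors do elsewhere in the paper: their Lemmas~\ref{lemma4,5,6,7} and~\ref{a10cut} run the time-dependent (Moser homotopy) version of the same flow argument, solving ${\mathcal L}_{\eta_t}A_t=\dot A_t$ for a path of algebraic restrictions instead of exponentiating a single nilpotent operator; your autonomous time-$1$ flow buys a cleaner closed-form computation, while Moser's method generalizes more readily when no single quasi-homogeneous $X$ does the job.
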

%
%
%
%


\section{Discrete symplectic invariants}

In \cite{D1}, \cite{DJZ2} and \cite{DT2} some symplectic invariants have been defined. They are the symplectic multiplicity, the index of isotropy and the Lagrangian tangency order. This system of invariants is not enough to distinguish the orbits in the classifications in this paper.
We recall these invariants and introduce a new invariant for algebraic restrictions to quasi-homogeneous parametrized curve-germs.

The first invariant is the symplectic multiplicity \cite{DJZ2} introduced in \cite{IJ} as the symplectic defect of a curve $f$.

\begin{defn}
The \textbf{symplectic multiplicity} ${\mu}_{sympl}(f)$ of a curve $f$ is the codimension of the symplectic orbit of $f$ in the $\mathcal A$-orbit of $f$.
\end{defn}

The symplectic multiplicity can be described in terms of algebraic restrictions.

\begin{proposition}(\cite{DJZ2})\label{sympm}
The symplectic multiplicity of a quasi-homogeneous curve $f$ in a symplectic space is equal to the codimension of the orbit of the algebraic restriction $[\om]_f$ with respect to the group of local diffeomorphisms preserving $f$ in the space of algebraic restrictions of closed 2-forms to $f$.
\end{proposition}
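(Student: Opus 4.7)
My plan is to translate symplectic equivalence inside the $\mathcal{A}$-orbit of $f$ into diffeomorphism equivalence of algebraic restrictions, and then pass to tangent spaces to read off the codimension. Let $V$ denote the space of algebraic restrictions of closed $2$-forms to $f$ (finite-dimensional by Theorem~\ref{teo2}), and let $G_f$ denote the group of local diffeomorphisms of $(\KK^{2n},0)$ preserving $f$ up to reparametrization. For each $g\in\mathcal{A}\cdot f$, choose $\Phi_g,\phi_g$ with $g=\Phi_g\circ f\circ\phi_g^{-1}$ and assign $[\Phi_g^*\om]_f\in V$; two different choices of $(\Phi_g,\phi_g)$ differ by composition with an element of the stabilizer of $f$ in $\mathcal{A}$, i.e.\ by an element of $G_f$, so the assignment is well defined modulo the $G_f$-action on $V$ by pull-back. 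Corollary~\ref{corteoa} then guarantees that this descends to a bijection between symplectic orbits contained in $\mathcal{A}\cdot f$ and $G_f$-orbits of algebraic restrictions in $V$ that admit a symplectic representative, with the symplectic orbit of $f$ itself corresponding to $G_f\cdot[\om]_f$.

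Next I would linearize at $f$. The tangent space $T_f(\mathcal{A}\cdot f)$ is spanned by infinitesimal variations of the form $X\circ f-h\,df/dt$, with $X$ any germ of a vector field on $(\KK^{2n},0)$ and $h$ any function germ on $(\KK,0)$; the tangent space $T_f(\mathcal{A}_s\cdot f)$ of the symplectic orbit is the subspace obtained by imposing $\mathcal{L}_X\om=0$. The assignment $X\mapsto[\mathcal{L}_X\om]_f$ kills symplectic vector fields, depends only on the restriction $X|_{\im(f)}$ (since any $W$ vanishing on $\im(f)$ satisfies $\iota_W\om\in\Lambda^1_{\im(f)}$ and hence $\mathcal{L}_W\om=d(\iota_W\om)\in\mathscr{A}_0^2$), and on liftable vector fields reproduces exactly the tangent space $T_{[\om]_f}(G_f\cdot[\om]_f)$. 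It therefore factors through to a well-defined linear map
\[
\Lambda\colon T_f(\mathcal{A}\cdot f)\big/T_f(\mathcal{A}_s\cdot f)\longrightarrow V\big/T_{[\om]_f}\bigl(G_f\cdot[\om]_f\bigr).
\]

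What remains, and is the main obstacle, is to prove that $\Lambda$ is an isomorphism. Surjectivity is a direct consequence of the Poincar\'e lemma combined with non-degeneracy of $\om$: given a closed $2$-form $\theta$, choose $\alpha$ with $d\alpha=\theta$ and define $X$ by $\iota_X\om=\alpha$, so that $\mathcal{L}_X\om=\theta$ and $\Lambda$ hits the class of $[\theta]_f$. Injectivity is where the real work lies and draws on the full strength of Theorem~\ref{teo35}: if $[\mathcal{L}_X\om]_f=\mathcal{L}_Y[\om]_f$ for some liftable $Y$, then $[\mathcal{L}_{X-Y}\om]_f=0$ and the family $\om+\epsilon\mathcal{L}_{X-Y}\om$ has the same algebraic restriction to $f$ as $\om$ to first order in $\epsilon$; applying Theorem~\ref{teo35} to this family (infinitesimally, via a Moser-type flow as in its proof) furnishes a liftable vector field $Z$ with $\mathcal{L}_Z\om=-\mathcal{L}_{X-Y}\om$, so that $X-Y+Z$ is symplectic. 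Writing $X=(X-Y+Z)+(Y-Z)$ on $\im(f)$ expresses $X|_{\im(f)}$ as the sum of a symplectic and a liftable field, placing $X\circ f-h\,df/dt$ in $T_f(\mathcal{A}_s\cdot f)$. Once $\Lambda$ is known to be an isomorphism, equating dimensions of source and target gives $\mu_{\mathrm{sympl}}(f)=\mathrm{codim}\bigl(G_f\cdot[\om]_f,\,V\bigr)$, which is the claim.
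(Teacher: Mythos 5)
The paper does not prove this proposition; it is imported verbatim from \cite{DJZ2}, so there is no in-paper argument to compare against. Your reconstruction is essentially the argument used there: set up the orbit correspondence via Corollary~\ref{corteoa}, linearize both group actions, and identify the two normal spaces. The two tangent-space computations are right (in particular the observation that $X\mapsto[\mathcal{L}_X\om]_f$ depends only on $X|_{\im(f)}$ because $\iota_W\om\in\Lambda^1_{\im(f)}$ forces $\mathcal{L}_W\om\in\mathscr{A}_0^2$, and that liftable fields sweep out exactly $T_{[\om]_f}(G_f\cdot[\om]_f)$), and surjectivity of $\Lambda$ via $\iota_X\om=\alpha$, $d\alpha=\theta$ is correct. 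One point worth making explicit in the injectivity step: to get a vector field $Z$ with $\mathcal{L}_Z\om=-\mathcal{L}_{X-Y}\om$ \emph{exactly} (not just up to zero algebraic restriction) and with $Z\circ f=0$, you need to write the closed form $\mathcal{L}_{X-Y}\om$, which has zero algebraic restriction, as $d\gamma_0$ with $\gamma_0\in\Lambda^1_{\im(f)}$, and then set $\iota_Z\om=-\gamma_0$; producing such a primitive requires the relative Poincar\'e lemma for quasi-homogeneous sets (\cite{DJZ1}), which is precisely where the quasi-homogeneity hypothesis enters and why the proposition fails for general curves. Finally, the ``codimensions'' on both sides are to be read at the level of these tangent spaces (the groups involved are infinite-dimensional), which is the convention of \cite{DJZ2}; with that understanding your argument is complete.
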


The second one is the index of isotropy.

\begin{defn}
The \textbf{index of isotropy} $\iota (f)$ of $f$ is the maximal order of vanishing of the 2-forms $\om |_{TS}$ over all smooth submanifolds $S$ containing $\im(f)$.
\end{defn}

Again we can describe the index of isotropy in terms of algebraic restrictions.

\begin{proposition}(\cite{DJZ2})\label{sympi}
The index of isotropy of a quasi-homogeneous curve $f$ in a symplectic space $(\KK^{2n},\om)$ is equal to the maximal order of vanishing of closed 2-forms representing the algebraic restriction $[\om]_f$
\end{proposition}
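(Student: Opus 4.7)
The plan is to prove both inequalities in
\[
\iota(f) \;=\; \max\bigl\{\mathrm{ord}_{0}\theta : d\theta=0,\ [\theta]_{f}=[\omega]_{f}\bigr\}
\]
separately.

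For the $\leq$ direction, suppose $S$ is a smooth submanifold containing $\im(f)$ with $\mathrm{ord}_{0}(\omega|_{TS}) = N$. Writing $i_{S}:S\hookrightarrow \KK^{2n}$ for the inclusion, I pick a smooth retraction $\pi : (\KK^{2n},0) \to (S,0)$ and set $\theta := \pi^{*}(i_{S}^{*}\omega)$. Then $\theta$ is closed (pullback of the closed form $i_{S}^{*}\omega$), has $\mathrm{ord}_{0}\theta \geq N$ (because $\pi(0)=0$ and pulling back a form vanishing to order $N$ at the origin again vanishes to order at least $N$), and satisfies
\[
i_{S}^{*}\theta = (\pi\circ i_{S})^{*}(\omega|_{TS}) = \omega|_{TS} = i_{S}^{*}\omega,
\]
whence $[\theta]_{f}=[\omega]_{f}$ by Proposition \ref{prop3.6}(a). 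Thus $N$ lies in the set on the right, giving the inequality.

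For the $\geq$ direction, let $\theta$ be closed with $[\theta]_{f}=[\omega]_{f}$ and $\mathrm{ord}_{0}\theta = N$, and write $\omega = \theta + \eta$ with $\eta = \alpha + d\beta$, $\alpha \in \Lambda_{\im(f)}^{2}$, $\beta \in \Lambda_{\im(f)}^{1}$. Since $\theta|_{TS}$ automatically vanishes to order $\geq N$ at $0$ for every $S \supset \im(f)$, it is enough to exhibit a smooth $S$ on which $\eta|_{TS}$ vanishes to order $\geq N$ at $0$. In a quasi-homogeneous coordinate system for which $f(t)=(t^{\lambda_{1}},\dots,t^{\lambda_{p}},0,\dots,0)$ and the minimal containing submanifold is $S_{0}=\{y_{1}=\cdots=y_{2n-p}=0\}$, I would take $S=\{y_{j}=\phi_{j}(x)\}$ for graph functions $\phi_{j}$ in the ideal of $\im(f)$, and choose them so that the contributions of $\alpha(x,\phi(x))$, $d(\beta|_{TS})$ and the graph-derivative terms cancel in the Taylor expansion of $\eta|_{TS}$ up to order $N-1$.

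The main obstacle is establishing the existence of such $\phi_{j}$. This step exploits the freedom in the decomposition $\eta = \alpha + d\beta$, the quasi-homogeneous structure (so the vanishing condition can be analysed quasi-degree by quasi-degree), and the fact that $\mathrm{ord}_{0}\theta \geq 1$ combined with Proposition \ref{mergsymp} forces $\dim S_{0} \leq n$, leaving at least $n$ normal directions in which to place the graph data. If necessary, Theorem \ref{teo35} can first be used to replace $\omega$ by a diffeomorphic symplectic form that admits a more convenient quasi-homogeneous representative of $\eta$, without changing the index of isotropy.
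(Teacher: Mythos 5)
Your ``$\le$'' half is correct and complete: pulling $\om|_{TS}$ back by a retraction onto $S$ and invoking Proposition \ref{prop3.6}(a) is exactly the right argument for that inequality. The problem is the ``$\ge$'' half, which is where the substance of the proposition lies and which you have not proved: you explicitly leave open the existence of the graph functions $\phi_j$, and the cancellation scheme you outline is not just unfinished but doubtful as a strategy. The forms $\alpha$ and $\beta$ vanish only along the one-dimensional set $\im(f)$, so restricting $\alpha+d\beta$ to a graph $S=\{y_j=\phi_j(x)\}$ over the minimal submanifold gives no a priori control of its order of vanishing at $0$ in the directions of $S$ transverse to the curve, and nothing in your sketch shows that the order-by-order system for the $\phi_j$ (which also involves the unknown decomposition $\eta=\alpha+d\beta$) is solvable. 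Note also that the paper itself gives no proof of this proposition --- it is quoted from \cite{DJZ2} --- so the comparison is with the argument there.

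The proof that works makes Theorem \ref{teo35} the engine of the ``$\ge$'' direction rather than the optional convenience you relegate it to, and it never constructs $S$ by hand. Assume $N=\mathrm{ord}_0\,\theta\ge 1$ (the case $N=0$ is trivial) and choose coordinates in which a minimal-dimensional submanifold containing $\im(f)$ is $S_0=\{x_{r+1}=\cdots=x_{2n}=0\}$. Since $\theta(0)=0$ and $[\theta]_f=[\om]_f$ with $\om$ symplectic, Proposition \ref{mergsymp} forces $r\le n$ (you did notice this). Hence there is a constant-coefficient symplectic form $\sigma=\sum_{i=1}^{r}dx_i\wedge dx_{r+i}+\sum_{j=1}^{n-r}dx_{2r+2j-1}\wedge dx_{2r+2j}$, each term of which contains some $dx_l$ with $l>r$; as $x_l$ vanishes on $\im(f)$ for $l>r$, each term is, up to sign, $d(x_l\,dx_m)$ with $x_l\,dx_m$ vanishing on $\im(f)$, so $[\sigma]_f=0$. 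Then $\om':=\theta+\sigma$ is symplectic, $[\om']_f=[\om]_f$, and $\om'|_{TS_0}=\theta|_{TS_0}$ vanishes to order at least $N$. Theorem \ref{teo35} now gives a diffeomorphism $\Phi$ with $\Phi\circ f=f$ and $\Phi^{*}\om'=\om$, and the submanifold $S:=\Phi^{-1}(S_0)$ contains $\im(f)$ with $\om|_{TS}=(\Phi|_S)^{*}(\om'|_{TS_0})$ vanishing to order at least $N$, so $\iota(f)\ge N$. This completion of $\theta$ to a symplectic form with zero-restriction correction, followed by transport of $S_0$ via the relative Darboux theorem, is the step missing from your proposal.
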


The Lagrangian tangency order was introduced in \cite{D1} following the ideas in \cite{A1}. If $H_1=\cdots =H_n=0$ define a smooth submanifold $L$ in the symplectic space $M$ then the tangency order of a curve $f:\KK \to M$ to $L$ is the minimum of the orders of vanishing at $0$ of the functions $H_1\circ f,\ldots, H_n\circ f$. We denote the tangency order of $f$ to $L$ by $t(f,L)$.

\begin{defn}
The \textbf{Lagrangian tangency order} $Lt(f)$ of a curve $f$ is the maximum of $t(f,L)$ over all smooth Lagrangian submanifolds $L$ of the symplectic space.
\end{defn}

The Lagrangian tangency order of a quasi-homogeneous curve in a symplectic space can also be expressed in terms of algebraic restrictions.

The order of vanishing of the germ of a 1-form $\alpha$ on a curve $f$ at 0 is the minimum of the orders of vanishing of the functions $\alpha(X)\circ f$ at 0 over all germs of smooth vector fields $X$. If $\alpha=\sum_{i=1}^{m} g_idx_i$ in local coordinates $(x_1,\ldots,x_m)$ then the order of vanishing of $\alpha$ on $f$ is the minimum of the orders of vanishing of functions $g_i\circ f$ for $i=1,\dots,m$.

\begin{proposition}(\cite{D1})\label{lto}
Let $f$ be the germ of a quasi-homogeneous curve such that the algebraic restriction of a symplectic form to it can be represented by a closed $2$-form vanishing at $0$. Then Lagrangian tangency order of the germ of a quasi-homogeneous curve $f$ is the maximum of the orders of vanishing on $f$ over all $1$-forms $\alpha$ such that $[\om]_f=[d\alpha]_f$.
\end{proposition}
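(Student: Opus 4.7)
The plan is to establish the equality by two inequalities, exhibiting a witness 1-form for one side and a Lagrangian submanifold for the other. For $Lt(f)\leq\max_{\alpha}\operatorname{ord}_f(\alpha)$, given any Lagrangian $L\ni 0$, the Weinstein--Darboux neighbourhood theorem supplies Darboux coordinates $(p_1,\ldots,p_n,q_1,\ldots,q_n)$ at $0$ with $\om=\sum_i dp_i\wedge dq_i$ and $L=\{p_1=\cdots=p_n=0\}$. Taking $\alpha:=\sum_i p_i\,dq_i$ gives $d\alpha=\om$, hence $[d\alpha]_f=[\om]_f$; since $\alpha(\partial_{q_j})=p_j$ and $\alpha(\partial_{p_j})=0$, the coordinate description of the order of vanishing yields $\operatorname{ord}_f(\alpha)=\min_j\operatorname{ord}(p_j\circ f)=t(f,L)$, so that $t(f,L)$ is attained in the set over which the maximum is taken.

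For the reverse inequality, fix $\alpha$ with $[d\alpha]_f=[\om]_f$ and $k:=\operatorname{ord}_f(\alpha)\geq 1$; such an $\alpha$ with $k\geq 1$ exists by hypothesis, as Poincar\'e's lemma applied to a closed representative of $[\om]_f$ vanishing at $0$ produces a primitive 1-form vanishing at $0$. I first reduce to $d\alpha=\om$ exactly. Writing $d\alpha-\om=\eta+d\gamma$ with $\eta,\gamma$ vanishing pointwise on $\im(f)$, set $\tilde\alpha:=\alpha-\gamma$; since $\gamma(X)\circ f\equiv 0$ for every vector field $X$, one has $\operatorname{ord}_f(\tilde\alpha)=k$, and $d\tilde\alpha=\om+\eta$ is closed and equals $\om$ at $0$, hence is symplectic near $0$ with the same algebraic restriction to $f$ as $\om$. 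Theorem \ref{teo35} then provides a diffeomorphism $\Phi$ with $\Phi\circ f=f$ and $\Phi^*\om=d\tilde\alpha$, so $\alpha':=(\Phi^{-1})^*\tilde\alpha$ satisfies $d\alpha'=\om$ exactly, and $\Phi^{-1}\circ f=f$ gives $\operatorname{ord}_f(\alpha')=\operatorname{ord}_f(\tilde\alpha)=k$.

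To extract a Lagrangian from $\alpha'$, observe that $d\alpha'=\om$ has maximal rank $2n$ and $\alpha'\wedge(d\alpha')^n$ is a $(2n+1)$-form on a $2n$-manifold, hence identically zero. Thus $\alpha'$ has constant class $2n$ near $0$, and Pfaff's (Darboux's) normal form theorem for 1-forms yields coordinates $(p_1,\ldots,p_n,q_1,\ldots,q_n)$ at $0$ with $\alpha'=\sum_i p_i\,dq_i$; these are automatically Darboux coordinates for $\om=d\alpha'$. The condition $\operatorname{ord}_f(\alpha')\geq 1$ forces $\alpha'(0)=0$, so $p_i(0)=0$ for every $i$, and therefore $L:=\{p_1=\cdots=p_n=0\}$ is a Lagrangian through $0$ with $t(f,L)=\min_j\operatorname{ord}(p_j\circ f)=\operatorname{ord}_f(\alpha')=k$, proving $Lt(f)\geq k$.

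The main obstacle is the reduction step above: passing from the algebraic-restriction identity $[d\alpha]_f=[\om]_f$ to a genuine pointwise identity $d\alpha'=\om$ while preserving $\operatorname{ord}_f$ is exactly where the Darboux--Givental--type Theorem \ref{teo35} for parameterized quasi-homogeneous curves enters essentially, applied to the two symplectic forms $\om$ and $\om+\eta$ near $0\in\KK^{2n}$. Once this reduction is available, Pfaff's theorem serves as the clean classical bridge between primitives of the symplectic form and Lagrangian submanifolds, producing an $L$ whose tangency order exactly matches $\operatorname{ord}_f(\alpha)$.
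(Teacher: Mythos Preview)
The paper does not prove this proposition; it is quoted as a result from \cite{D1}, so there is no argument in the present paper to compare your attempt against. Your first inequality $Lt(f)\le\max_\alpha\operatorname{ord}_f(\alpha)$ and your reduction step (using Theorem~\ref{teo35} to replace $\alpha$ by a primitive $\alpha'$ with $d\alpha'=\omega$ exactly, while preserving $\operatorname{ord}_f$) are both correct and carefully done.

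The gap is in the final step. The Pfaff--Darboux normal form for $1$-forms does \emph{not} yield the model $\sum_i p_i\,dq_i$ at a point where the form vanishes; constant class $2n$ is not sufficient there. On $(\KK^{2},0)$ take the radial Liouville form $\alpha'=\tfrac12(p\,dq-q\,dp)$: one has $d\alpha'=dp\wedge dq$ symplectic everywhere and $\alpha'(0)=0$, yet $\alpha'$ cannot be written as $P\,dQ$ in any smooth coordinates centred at $0$, because its zero locus is the single point $\{0\}$ whereas the zero locus of $P\,dQ$ is the smooth curve $\{P=0\}$. Equivalently, the Liouville vector field $Z$ determined by $\iota_Z\omega=\alpha'$ has linearisation at $0$ with eigenvalues $\tfrac12,\tfrac12$, while for $\sum_i p_i\,dq_i$ the eigenvalues are $0$ and $1$; this spectrum is a diffeomorphism invariant, so the two forms are inequivalent near $0$. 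Zeros of Liouville $1$-forms are genuine singularities admitting several inequivalent local models, and your appeal to Pfaff precisely at such a zero is the step that fails. Since the extraction of a Lagrangian $L$ with $t(f,L)=\operatorname{ord}_f(\alpha')$ was exactly what the normal form was meant to supply, the reverse inequality remains unproved. Note also that the quasi-homogeneity of $f$, which is part of the hypothesis, is never used in your argument for this direction; it is very likely the missing ingredient, and the proof in \cite{D1} presumably exploits the quasi-homogeneous grading to control which primitives occur or to build the Lagrangian directly.
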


Next we define an invariant to distinguish the different orbits of algebraic restrictions to quasi-homogeneous parameterized curve-germs of $k$-forms under the action of local symmetries. Let $f:(\KK,0)\to (\KK^{m},0)$ given by $f(t)=(t^{\lambda_1},\ldots,t^{\lambda_s},0,\ldots,0)$, where $\lambda_1<\cdots < \lambda_s$ are positive integers linearly independent over non-negative integers.

\begin{defn}
Let $a_1,a_2$ be algebraic restrictions to $f$ of $k$-forms on $(\KK^{m},0)$. Let $r_1,r_2$ be the smallest non-negative integers such that $a_{1}^{(r_1)}\neq 0$ and $a_{2}^{(r_2)}\neq 0$. We say that $a_1$ and $a_2$ have \textbf{proportional minimum quasi-degree part} if $r_1=r_2$ and
\[
a_2^{(r_1)}=ca_1^{(r_1)},
\]
for some $c\in \KK-\{0\}$.
\end{defn}

\begin{lemma}\label{prepmqdb}
Let $f:(\KK,0)\to (\KK^{s},0)$ be a curve-germ given by $f(t)=(t^{\lambda_1},\ldots,t^{\lambda_s})$, where $\lambda_1< \cdots <\lambda_s$ are positive integers linearly independent over non-negative integers. If $\Phi:(\KK^s,0)\to (\KK^s,0)$ is a local symmetry of $f$ then $\Phi$ is of the form
\begin{equation}\label{localsymmetrylp}
\begin{array}{l}
\Phi(x)=(c^{\lambda_1}x_1 +c_{1,2}x_2+\cdots + c_{1,s}x_s+\xi_1(x),\\
c^{\lambda_2}x_2 +c_{2,3}x_3+\cdots + c_{2,s}x_s+\xi_2(x),
\cdots,c^{\lambda_s}x_s+\xi_s(x)),
\end{array}
\end{equation}
where $c_{i,j}\in \KK$ and $\xi_i\in {\mathcal M}_s^{2}$.
\end{lemma}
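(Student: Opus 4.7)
The plan is to compare Taylor coefficients on both sides of the defining relation of a local symmetry. Because $\Phi$ is a local symmetry of $f$, there exists a diffeomorphism $\phi:(\KK,0)\to(\KK,0)$ with $\Phi\circ f=f\circ\phi$; I will write $\phi(t)=ct+O(t^2)$ with $c\neq 0$ and $\Phi(x)=Ax+\xi(x)$, where $A=(A_{i,j})$ is the linear part and $\xi=(\xi_1,\ldots,\xi_s)$ has components $\xi_i\in {\mathcal M}_s^2$. The $i$-th component of the symmetry relation then reads
\begin{equation*}
\sum_{j=1}^{s}A_{i,j}\,t^{\lambda_j}+\xi_i(t^{\lambda_1},\ldots,t^{\lambda_s})=\phi(t)^{\lambda_i}.
\end{equation*}

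The essential arithmetic observation I need is that linear independence of $\lambda_1,\ldots,\lambda_s$ over non-negative integers forbids any representation $\lambda_j=\sum_{l}k_l\lambda_l$ with $k_l\in\ZZ_{\geq 0}$ and $\sum_l k_l\geq 2$. The case $k_j=0$ is immediate from the hypothesis; if $k_j\geq 1$, the relation rearranges to $(k_j-1)\lambda_j+\sum_{l\neq j}k_l\lambda_l=0$, forcing every $k_l=0$ by positivity of the weights, hence contradicting $\sum_l k_l\geq 2$. Since $\xi_i(t^{\lambda_1},\ldots,t^{\lambda_s})$ is supported on exponents of the form $\sum k_l\lambda_l$ with $\sum k_l\geq 2$, it contributes nothing to the coefficient of $t^{\lambda_j}$ on the left-hand side, which therefore equals $A_{i,j}$.

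On the right I expand $\phi(t)^{\lambda_i}=c^{\lambda_i}t^{\lambda_i}+O(t^{\lambda_i+1})$, so the coefficient of $t^m$ vanishes for $m<\lambda_i$ and equals $c^{\lambda_i}$ at $m=\lambda_i$. Matching powers $t^{\lambda_j}$ for $j\leq i$ yields $A_{i,j}=0$ whenever $j<i$ and $A_{i,i}=c^{\lambda_i}$. The entries $A_{i,j}$ with $j>i$ are unconstrained by these low-order equations, and I relabel them $c_{i,j}$; the higher-order part of $\Phi_i$ is then packaged into $\xi_i\in {\mathcal M}_s^2$. This is exactly the normal form (\ref{localsymmetrylp}).

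I do not expect a genuine obstacle: the argument is bookkeeping inside a power-series expansion once the combinatorial lemma on non-representability of $\lambda_j$ is in hand. The one mildly subtle point is that the freedom in the off-diagonal coefficients $c_{i,j}$ for $j>i$ is compensated, when the symmetry relation is read as a whole, by the higher Taylor coefficients of $\phi$ together with the values of $\xi_i$ on quadratic monomials; at the level of $\Phi$ alone, which is what the lemma records, those entries are unrestricted.
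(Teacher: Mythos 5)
Your proof is correct and follows essentially the same route as the paper: both expand the relation $\Phi\circ f=f\circ\phi$ with $\phi(t)=ct+O(t^2)$ and use the linear independence of the weights over non-negative integers to see that the nonlinear part of $\Phi$ cannot contribute to the coefficients of $t^{\lambda_j}$, forcing the stated triangular form of the linear part. You merely make explicit the coefficient-matching (and the small arithmetic fact that no $\lambda_j$ equals $\sum_l k_l\lambda_l$ with $\sum_l k_l\geq 2$) that the paper's proof leaves implicit in its final sentence.
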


\begin{proof}
If $\Phi:(\KK^s,0)\to (\KK^s,0)$ is a local symmetry of $f$ then there exists a germ of diffeomorphism $\phi:(\KK,0)\to (\KK,0)$ such that $\Phi\circ f=f\circ \phi$. Then
\[
\Phi(t^{\lambda_1},\ldots, t^{\lambda_s})=({\phi(t)}^{\lambda_1},\ldots,{\phi(t)}^{\lambda_s}).
\]
We can write $\phi(t)=ct+t^2u(t)$, where $u$ is smooth and $c\neq 0$. Then
\[
\Phi(t^{\lambda_1},\ldots,t^{\lambda_s})=(c^{\lambda_1}t^{\lambda_1}(1+\frac{tu(t)}{c})^{\lambda_1},\ldots,c^{\lambda_s}t^{\lambda_s}(1+\frac{tu(t)}{c})^{\lambda_s}).
\]
Since $\lambda_1< \cdots <\lambda_s$ are linearly independent over non-negative integers then $\Phi$ must be as in (\ref{localsymmetrylp}).
\end{proof}

\begin{theorem}\label{pmqdp}
Let $a_1,a_2$ be algebraic restrictions to $f$ of $k$-forms in $(\KK^m,0)$. If $a_1$ and $a_2$ are diffeomorphic then they have proportional minimum quasi-degree part.
\end{theorem}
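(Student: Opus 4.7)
I would first apply Proposition \ref{prop3.6} to reduce to the case $m=s$: since $\im(f)$ lies in the smooth submanifold $\KK^s\times\{0\}\subset\KK^m$, both algebraic restrictions and the question of their diffeomorphism equivalence transfer faithfully to that submanifold. Assume therefore $f(t)=(t^{\lambda_1},\dots,t^{\lambda_s})$. By the definition of diffeomorphic algebraic restrictions there exist a local symmetry $\Phi:(\KK^s,0)\to(\KK^s,0)$ of $f$ with $\Phi\circ f=f\circ\phi$ and $[\Phi^{*}\omega_1]_f=[\omega_2]_f$, where $\omega_j$ represents $a_j$. By Lemma \ref{prepmqdb}, $\Phi$ has the explicit form (\ref{localsymmetrylp}) with constant $c=\phi'(0)\neq 0$.

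The core of the proof is the following claim: for every quasi-homogeneous $k$-form $\omega$ of quasi-degree $r$,
\[
[\Phi^{*}\omega]_f \;=\; c^{r}\,[\omega]_f \;+\; \bigl(\text{algebraic restrictions of quasi-degree }>r\bigr).
\]
To establish it, let $\Phi_i^{(k)}$ denote the quasi-degree-$k$ part of $\Phi_i$, and set $\tilde\Phi_i=\Phi_i-\sum_{k<\lambda_i}\Phi_i^{(k)}$. The identity $\Phi_i(f(t))=\phi(t)^{\lambda_i}=c^{\lambda_i}t^{\lambda_i}+O(t^{\lambda_i+1})$, combined with the observation that any monomial $x^{\alpha}$ of quasi-degree $k$ evaluates on $f(t)$ to $t^{k}$, forces $\Phi_i^{(k)}(f(t))=0$ for every $k<\lambda_i$: the single coefficient of $t^k$ produced by that quasi-homogeneous component must vanish. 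Hence each removed summand lies in $\Lambda^{0}_{\im(f)}$, its differential lies in $\mathscr A^{1}_{0}(\im(f),\KK^s)$, and $\tilde\Phi\circ f=\Phi\circ f$. Using Proposition \ref{propalgres} for the wedge-closure of $\mathscr A^{\bullet}_{0}$, the expansion $\Phi^{*}(f_I\,dx_I)=(f_I\circ\Phi)\,d\Phi_{i_1}\wedge\cdots\wedge d\Phi_{i_k}$ gives $[\Phi^{*}\omega]_f=[\tilde\Phi^{*}\omega]_f$. Since $\tilde\Phi_i=c^{\lambda_i}x_i+(\text{quasi-degree}>\lambda_i)$, a direct computation using the quasi-homogeneity of the coefficients of $\omega$ yields $\tilde\Phi^{*}\omega=c^{r}\omega+(\text{quasi-degree}>r)$, proving the claim.

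To conclude, let $r_1$ be the minimal quasi-degree with $a_1^{(r_1)}\neq 0$, and choose a representative of $a_1$ whose quasi-degree expansion starts at $r_1$ (possible because lower quasi-degree parts of $a_1$ vanish, so the corresponding pieces of any representative belong to $\mathscr A^{k}_{0}$ by Proposition \ref{prop6.5} and can be subtracted): $\omega_1=\omega_1^{(r_1)}+\omega_1^{(r_1+1)}+\cdots$. Applying the claim summand by summand to $\Phi^{*}\omega_1$ gives $[\Phi^{*}\omega_1]_f=c^{r_1}a_1^{(r_1)}+(\text{quasi-degree}>r_1)$, which equals $a_2$. Reading off quasi-degrees, $a_2^{(r')}=0$ for $r'<r_1$ while $a_2^{(r_1)}=c^{r_1}a_1^{(r_1)}\neq 0$. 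Hence $r_2=r_1$ and $a_2^{(r_1)}=c^{r_1}a_1^{(r_1)}$, which is the required proportional minimum quasi-degree part condition.

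The principal obstacle is controlling the monomials in $\xi_i$ of quasi-degree $<\lambda_i$: although $\xi_i\in\mathcal M_s^{2}$, for general semigroups (those with $2\lambda_1\leq\lambda_s$) such low quasi-degree terms can occur and would a priori disturb the leading-quasi-degree behaviour of $\Phi^{*}$. The cancellation argument in the second paragraph — showing that all these terms collectively lie in $\Lambda^{0}_{\im(f)}$ and hence are absorbed by $\mathscr A^{\bullet}_{0}$ — is what makes the proof go through uniformly.
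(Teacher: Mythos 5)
Your proof is correct and follows essentially the same route as the paper's: reduce to $f(t)=(t^{\lambda_1},\dots,t^{\lambda_s})$ via Proposition \ref{prop3.6}, invoke Lemma \ref{prepmqdb} for the form of $\Phi$, and show that $\Phi^{*}$ acts on the lowest quasi-homogeneous part as multiplication by $c^{r_1}$ modulo higher quasi-degree terms and zero algebraic restrictions. Your second paragraph merely spells out, more carefully than the paper does, why the possible low quasi-degree terms of $\xi_i$ cause no harm (they vanish on $\im(f)$ and are absorbed into $\mathscr A^{\bullet}_{0}$), which is a justification the paper's assertion $\Phi^{*}\theta_1^{(r_1)}=c^{r_1}\theta_1^{(r_1)}+\nu$ leaves implicit.
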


\begin{proof}
We can suppose $f(t)=(t^{\lambda_1},\ldots,t^{\lambda_s})$ due to Proposition \ref{prop3.6}. Since $a_1$ and $a_2$ are diffeomorphic then there exist a local symmetry $\Phi:(\KK^s,0)\to (\KK^s,0)$ and $\phi:(\KK,0)\to (\KK,0)$ such that $\Phi\circ f=f\circ \phi$ and $\Phi^*a_1=a_2$. Due to Lemma \ref{prepmqdb} $\Phi$ must be as in (\ref{localsymmetrylp}).
%

Let $r_1,r_2$ be the smallest non-negative integers such that $a_{1}^{(r_1)}\neq 0$ and $a_{2}^{(r_2)}\neq 0$. Consider $\theta_1$ a $k$-form representing $a_1$. The $k$-form  $\theta_1^{(r_1)}$ can be assumed as
\[
\theta_1^{(r_1)}=\sum u_Ix_1^{\gamma_1 (I)}\ldots x_s^{\gamma_s (I)}dx_{i_1}\wedge \ldots \wedge dx_{i_k},
\]
where $I=i_1,\ldots,i_s$, $u_I\in \KK$ and $\gamma_1(I),\ldots,\gamma_s(I)$ are non-negative integers such that $x_1^{\gamma_1 (I)}\ldots x_s^{\gamma_s (I)}$ is a monomial with quasi-degree $r_1-\lambda_{i_1}-\cdots -\lambda_{i_k}$. Let $\eta=\theta_1-\theta_{1}^{(r_1)}$. We have
\[
a_2=\Phi^{*}a_1=\Phi^{*}[\theta_1]_f=[\Phi^{*}(\theta_1^{(r_1)}+\eta)]_f=[\Phi^{*}\theta_1^{(r_1)}]_f+[\Phi^{*}\eta]_f.
\]
One has $[\Phi^{*}\eta]_f^{(j)}=0$, for all $j<r_1+1$. Moreover
\[
\Phi^{*}\theta_1^{(r_1)}=c^{r_1}\theta_1^{(r_1)}+\nu,
\]
where $\nu$ is a $k$-form such that $\nu^{(j)}=0$, for all $j<r_1+1$. We have
\[
a_2=\Phi^{*}a_1=\Phi^{*}[\theta_1]_f=c^{r_1}[\theta_{1}^{(r_1)}]_f+[\mu]_f=c^{r_1}a_{1}^{r_1}+[\mu]_f,
\]
where $\mu=\Phi^*\eta + \nu$. Then $r_1=r_2$ and $a_2^{(r_1)}=c^{r_1}a_1^{(r_1)}$. Therefore $a_1$ and $a_2$ have proportional minimum quasi-degree part.
\end{proof}

\section{Symplectic singularities of curves with semigroup $(4,5,6,7)$}
The symplectic classification of curves with semigroup $(4,5,6,7)$ is given by the following Theorem.

\begin{theorem}\label{cla4,5,6,7}
\begin{description}
  \item[(i)] Let $(\RR^{2n}, \om_0 = \sum_{i=1}^{n} dp_i\wedge dq_i)$ be the symplectic space with the canonical coordinates $(p_1,q_1,\ldots,p_n,q_n)$. Then the germ of a curve $f:(\RR ,0)\to (\RR^{2n},0)$ with semigroup $(4,5,6,7)$ is symplectically equivalent to one and only one of the curves in the second column of Table \ref{r4} for $n=2$. When $n=3$ $f$ is symplectically equivalent to one and only one of the curves in the second column and rows 1-14 of Table \ref{r>4}. Finally when $n\geq 4$ $f$ is symplectically equivalent to one and only one of the curves in the second column of Table \ref{r>4}.
\item[(ii)] The parameters $c,c_1,c_2$ and $c_3$ are moduli.
\end{description}
\end{theorem}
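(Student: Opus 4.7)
The overall strategy is to invoke Corollary~\ref{corteoa}, which reduces the symplectic classification of curves $\mathcal A$-equivalent to $f(t)=(t^4,t^5,t^6,t^7,0,\ldots,0)$ to the classification, modulo the group of local symmetries of $f$, of algebraic restrictions $[\omega]_f$ that admit a symplectic representative. By Proposition~\ref{prop3.6} we first work in the ambient space $(\KK^4,0)$ containing $\im(f)$, classifying algebraic restrictions of closed 2-forms there, and only afterwards pull back to $(\KK^{2n},0)$, using Proposition~\ref{mergsymp} with $r=4$ to determine which algebraic restrictions are symplectically realisable for each $n$: for $n\ge 4$ every class is realisable, for $n=3$ the rank of a representative at $0$ restricted to $T_0\KK^4$ must be $\ge 2$, and for $n=2$ it must be $4$. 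This is what forces the three different tables in the statement.

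The first concrete step is to compute a quasi-homogeneous basis of the finite-dimensional space of algebraic restrictions of closed 2-forms to $f$ on $(\KK^4,0)$, guaranteed by Theorems~\ref{teo2} and~\ref{corteo2}. Using Lemma~\ref{lemma6.2}, each monomial $k$-form is determined modulo algebraic restriction by its quasi-degree (with weights $(4,5,6,7)$); one writes down all quasi-homogeneous 2-forms up to the cut-off quasi-degree $K(f)$, reduces modulo the relations coming from $\mathscr A_0^2(\im(f),\KK^4)$ (implemented by the Singular algorithm mentioned in the introduction), and then extracts the closed-form subspace via Proposition~\ref{basisclo}. This yields an explicit finite list $a_1,\ldots,a_p$ of quasi-homogeneous generators with increasing quasi-degrees $\delta_1<\delta_2<\cdots$.

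Next I would compute the liftable vector fields. By Theorem~\ref{genvect}, the tangent space to the orbit of any quasi-homogeneous algebraic restriction $a_r$ of degree $r$ is spanned by $\mathcal L_{X_s}a_r$ for $X_s\circ f=t^{s+1}df/dt$ with $s\in\ZZ_{\ge 0}\langle 4,5,6,7\rangle$ and $s<K(f)-r$; I would compute each $X_s$ explicitly in coordinates on $(\KK^4,0)$ and tabulate the Lie derivatives $\mathcal L_{X_s}a_j$. With this table in hand, a general algebraic restriction $a=\sum c_j a_j$ is reduced to a normal form by repeatedly applying Theorem~\ref{teo6.3}: each time the lowest surviving coefficient $c_s\neq 0$ admits a liftable $X$ with $\mathcal L_X a_s=r a_k$, $r\neq 0$, one can kill $a_k$. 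The normal forms emerging from this elimination cascade give the curves listed in the tables, and for each one the symplectic realisability condition (rank of the representative at $0$) selects the rows that remain for $n=2,3$.

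The main obstacle, and where the interest of the paper lies, is proving non-equivalence of the normal forms that the group action cannot merge, and in particular showing that $c,c_1,c_2,c_3$ are genuine moduli. Here I would compute for each candidate pair $(a,a')$ the triple of classical invariants $\mu_{sympl}$, $\iota$, $Lt$ from Propositions~\ref{sympm},~\ref{sympi},~\ref{lto}; whenever these agree, which according to the introduction does happen for the semigroup $(4,5,6,7)$, I would invoke the new invariant of Theorem~\ref{pmqdp}: two diffeomorphic algebraic restrictions must have proportional minimum quasi-degree parts, so if after reduction the degree-$r_0$ parts of $a$ and $a'$ are not scalar multiples of each other, they lie in different orbits. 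For the moduli parameters specifically, one inspects the residual liftable action by the scaling in Lemma~\ref{prepmqdb}: the leading quasi-homogeneous part is only rescaled by $c^{r_0}$, which for suitably normalised forms leaves $c,c_1,c_2,c_3$ invariant (or at most rescaled to a fixed representative of a one-parameter family), proving modality.
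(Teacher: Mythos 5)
Your overall strategy coincides with the paper's: reduce via Corollary~\ref{corteoa} and Proposition~\ref{prop3.6} to algebraic restrictions on $(\KK^4,0)$, compute the quasi-homogeneous basis (Proposition~\ref{basis4,5,6,7}, via the Singular algorithm), tabulate $\mathcal L_{X_s}a_j$ for the liftable fields of Theorem~\ref{genvect}, normalise with Theorem~\ref{teo6.3}, sort the resulting classes by the rank condition of Proposition~\ref{mergsymp} (which indeed gives the thresholds $4$, $2$, $\le 0$ for $n=2,3,\ge 4$), and separate orbits with coinciding $\mu_{sympl},\iota,Lt$ by Theorem~\ref{pmqdp}. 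However, there are two genuine gaps.

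First, the ``elimination cascade'' by Theorem~\ref{teo6.3} alone does not produce all the normal forms in Tables~\ref{symm4} and~\ref{symm6}. That theorem only applies when some liftable quasi-homogeneous $X$ satisfies $\mathcal L_X a_s = r a_k$ with the right-hand side a nonzero multiple of a \emph{single} basis element; for several leading terms this fails (e.g.\ $\mathcal L_{X_1}a_{10}=4a_{11}^{+}+6a_{11}^{-}$), so one cannot kill $a_{13}^{+}$ from $\pm a_{10}+c_1a_{11}^{-}+c_2a_{12}+c_3a_{13}^{+}$ (when $c_1\neq 0$), nor $a_{13}^{+}$ from the $a_{11}^{+}-\frac32 a_{11}^{-}+c_1a_{12}+c_2a_{13}^{+}$ family, by that theorem. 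The paper handles exactly these cases in Lemma~\ref{lemma4,5,6,7} by Moser's homotopy method, solving a linear system for time-dependent coefficients $b_i(t)$ of a combination of the $X_i$ so that $\mathcal L_{\eta_t}A_t=c_2a_{13}^{+}$ along the path $A_t$. Without this (or an equivalent non-quasi-homogeneous argument) your list of normal forms would be strictly larger than the correct one, and the ``one and only one'' claim would fail.

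Second, your argument that $c,c_1,c_2,c_3$ are moduli is insufficient. The proportional-minimum-quasi-degree invariant and the scaling subgroup of Lemma~\ref{prepmqdb} control only the lowest quasi-degree part; the parameters in question multiply \emph{higher} quasi-degree basis elements (e.g.\ $c_1a_{11}^{-}+c_2a_{12}$ on top of the leading $\pm a_{10}$), and the symmetry group also contains the shears and higher-order terms generating $X_1,\ldots,X_6$, which a priori could absorb them. The paper's argument is a tangent-space computation: for $[\om_2]_g$ one lists the span of $\mathcal L_{X_i}[\om_2]_g$ and checks that $a_{11}^{-}$ and $a_{12}$ do not lie in it, so the orbit has positive codimension in the parameter directions. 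You have the data to run this check (your Lie-derivative table), but you must actually perform it rather than appeal to the diagonal scaling alone.
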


\FloatBarrier
\begin{table}[H]
\caption{Normal forms in $\RR^{4}$}
\label{r4}
\[
\begin{array}{c l c c c }
\hline
& \text{Normal forms of}\, f & \mu_{sympl}(f) & \iota (f) & Lt(f)\\
\hline
1 & t\to(t^4,t^5+c_1t^7,t^6,c_2t^7), \,\ c_2\neq 0 & 2 & 0 & 5 \\

2 & t\to(t^4,\pm t^6+c_1t^7,t^5,c_2t^7), \,\ c_1,c_2\neq 0 & 3 & 0 & 6 \\
3 & t\to (t^4,\pm t^6,t^7,c_1t^5 +c_2t^6), \,\ c_1\neq 0 & 4 & 0 & 6 \\

4 & t\to (t^4,c_1t^7,t^6,-t^5+c_2t^7), \,\ c_1\neq 0,-\frac{3}{2},-\frac{1}{3},-1,\frac{12}{7} & 4 & 0 & 6 \\

5 & t\to (t^4,-\frac{3}{2}t^7,t^5,t^6+ct^7), \,\ c\neq 0 & 4 & 0 & 6 \\
6 & t\to (t^4,-\frac{3}{2}t^7,t^6,ct^7-t^5) & 5 & 0 & 6 \\

7 & t\to(t^4,-t^7+c_1t^{10},t^6,-t^5+c_2t^7) & 5 & 0 & 6 \\

8 & t\to(t^4,-\frac{1}{3}t^7+c_1t^{11},t^6,-t^5+c_2t^7) & 5 & 0 & 6 \\

9 & t\to (t^4,\frac{12}{7}t^7+ct^9,t^5,t^6) & 4 & 0 & 6 \\
\hline
\end{array}
\]
\end{table}

\FloatBarrier
\begin{table}[H]
\caption{Normal forms in $\RR^{2n}, n>2$}
\label{r>4}
\[
\begin{array}{c l c c c}
\hline
 & \text{Normal forms of}\, f & \mu_{sympl}(f) & \iota (f) & Lt(f)\\
\hline
1 & t\to(t^4,t^5 + c_1t^7,t^6,c_2t^7,t^7,0,\ldots,0) & 2 & 0 & 5 \\
2 & t\to(t^4,\pm t^6+c_1t^7,t^5,c_2t^7,t^7,0,\ldots,0), \,\ c_1\neq 0 & 3 & 0 & 6 \\
3 & t\to(t^4,\pm t^6,t^5,c_1t^7,t^6,c_2t^7,0,\ldots,0) & 4 & 0 & 6 \\

4 & t\to(t^4,c_1t^7,t^5,t^6,t^7,c_2t^6,0,\ldots,0),\, c_1\neq -\frac{3}{2},-\frac{1}{3},-1,\frac{12}{7} & 4 & 0 & 6 \\

5 & t\to(t^4,-\frac{3}{2}t^7,t^5,t^6+ct^7,0,\ldots,0), \,\ c\neq 0 & 4 & 0 & 6 \\
6 & t\to(t^4,-\frac{3}{2}t^7,t^6,ct^7-t^5,0,\ldots,0) & 5 & 0 & 6 \\

7 & t\to(t^4,-t^7+c_1t^{10},t^6,-t^5+c_2t^7,0,\ldots,0) & 5 & 0 & 6 \\

8 & t\to(t^4,-\frac{1}{3}t^7+c_1t^{11},t^6,-t^5+c_2t^7,0,\ldots,0) & 5 & 0 &  6\\

9 & t\to (t^4,\frac{12}{7}t^7+ct^9,t^5,t^6,0,\ldots,0) & 4 & 0 & 6 \\

10 & t\to(t^4,t^7,t^5,0,t^6,ct^7,0,\ldots,0) & 4 & 0 & 7 \\
11 & t\to(t^4,0,t^5,\pm t^7,t^6,ct^7,0,\ldots,0) & 5 & 0 & 7 \\
12 & t\to(t^4,ct^9,t^5,0,t^6,t^7,0,\ldots,0), c\neq -1,-2 & 6 & 0 & 7 \\
13 & t\to(t^4,2t^9+ct^{10},t^5,0,t^6,t^7,0,\ldots,0) & 7 & 0 & 7 \\
14 & t\to(t^4,-t^9+ct^{11},t^5,0,t^6,t^7,0,\ldots,0) & 7 & 0 & 7 \\
15 & t\to(t^4,t^9,t^5,0,t^6,0,t^7,0,\ldots,0) & 6 & 1 & 9 \\
16 & t\to(t^4,\pm t^{10},t^5,0,t^6,0,t^7,0,\ldots,0) & 7 & 1 & 10 \\
17 & t\to(t^4,t^{11},t^5,0,t^6,0,t^7,0,\ldots,0) & 8 & 1 & 11 \\
18 & t\to(t^4,0,t^5,0,t^6,0,t^7,0,\ldots,0) & 9 & \infty & \infty \\
\hline \vspace{0.6cm}
\end{array}
\]
\end{table}


The proof of Theorem \ref{cla4,5,6,7} is done in the following steps. Due to Proposition \ref{a-eq} $f$ is ${\mathcal A}$-equivalent to $g(t)=(t^4,t^5,t^6,t^7,0,\ldots,0)$. Due to Corollary \ref{corteoa}, the classification of parameterized curve-germs with semigroup $(4,5,6,7)$ under the symplectic action is given by the  classification of algebraic restrictions to $g$ of symplectic forms under the action of local symmetries of $g$. Due to Proposition \ref{mergsymp}, an algebraic restriction to $g$ of a closed 2-form has a symplectic representative in $\RR^{2n_0}$, for some $n_0\geq 2$. Proposition \ref{basis4,5,6,7} provides a basis for the vector space of algebraic restrictions of closed 2-forms to $g$.
In  Proposition \ref{claalg4,5,6,7} we find a finite set of orbits of algebraic restrictions to $g$ of symplectic forms. Finally, we apply Corollary \ref{corteoa} and Proposition \ref{claalg4,5,6,7}.
%
%


\begin{proposition}\label{basis4,5,6,7}
A basis of the vector space of algebraic restrictions of closed 2-forms to $g$
is given by:
\[
\begin{array}{l l l}
a_9=[dx_1\wedge dx_2]_g & a_{10}=[dx_1\wedge dx_3]_g & a_{11}^{+}=[dx_2\wedge dx_3]_g\\ \\
a_{11}^{-}=[dx_1\wedge dx_4]_g & a_{12}=[dx_2\wedge dx_4]_g & a_{13}^{+}=[dx_3\wedge dx_4]_g\\ \\
a_{13}^{-}=[x_1dx_1\wedge dx_2]_g  & a_{14}=[x_1dx_1\wedge dx_3]_g & a_{15}^{-}=[x_1dx_1\wedge dx_4]_g\\ \\
\end{array}
\]
\end{proposition}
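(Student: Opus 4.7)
The plan is to combine two reductions with a quasi-degree-by-quasi-degree analysis of algebraic restrictions of 2-forms, then use Proposition \ref{basisclo} to extract the closed classes. Since $\im(g)$ lies in the 4-dimensional submanifold $\{x_5=\cdots=x_{2n}=0\}$, Proposition \ref{prop3.6}(a) lets me assume $g(t)=(t^4,t^5,t^6,t^7)$ in $(\KK^4,0)$. Theorem \ref{corteo2} reduces the problem to analyzing quasi-homogeneous closed 2-forms of bounded quasi-degree, and Proposition \ref{prop6.5} splits the analysis over quasi-degrees $r$. For each $r$, Lemma \ref{lemma6.2} says that the algebraic restriction of $m\, dx_i\wedge dx_j$ depends only on $r-\lambda_i-\lambda_j$, so the candidate generators are parameterized by pairs $(i,j)$ with $r-\lambda_i-\lambda_j\in\langle 4,5,6,7\rangle = \{0,4,5,6,7,\dots\}$.

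Next I would verify that the nine proposed elements are all closed: $dx_i\wedge dx_j$ trivially, and $x_1\, dx_1\wedge dx_j = \tfrac{1}{2}\, d(x_1^2\, dx_j)$ as exact. Linear independence follows from Theorem \ref{pmqdp}: these nine classes have pairwise distinct quasi-degrees except for the pairs $(a_{11}^+, a_{11}^-)$ and $(a_{13}^+, a_{13}^-)$, and the quasi-degree-11 and quasi-degree-13 parts of $\mathscr{A}_0^2 = I\cdot\Lambda^2 + dI\wedge\Lambda^1$ are both zero (the minimum quasi-degree of the ideal $I$ of $\im(g)$ is 10, and there are no nonzero 1-forms of quasi-degree less than 4).

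The bulk of the work is the spanning statement. For each $r\in\{9,10,\dots,15\}$ I would enumerate candidate classes and compute the quasi-degree-$r$ part of $\mathscr{A}_0^2$ using the ideal generators $x_1x_3-x_2^2$ (qd 10), $x_1x_4-x_2x_3$ (qd 11), $x_2x_4-x_3^2$ and $x_1^3-x_3^2$ (qd 12), $x_1^2x_2-x_3x_4$ (qd 13), together with higher syzygies as needed. For example, in quasi-degree 15 the four candidates $[x_3\,dx_1\wedge dx_2]_g$, $[x_2\,dx_1\wedge dx_3]_g$, $[x_1\,dx_1\wedge dx_4]_g$, $[x_1\,dx_2\wedge dx_3]_g$ are subject to the two relations coming from $d(x_1x_3-x_2^2)\wedge dx_2$ and $d(x_1x_4-x_2x_3)\wedge dx_1$, yielding a 2-dimensional quotient whose closed 1-dimensional subspace is spanned by $a_{15}^- = [x_1\,dx_1\wedge dx_4]_g$, the unique combination whose exterior derivative vanishes as an algebraic restriction of a 3-form.

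The main obstacle is to prove that every closed algebraic restriction of quasi-degree $r\ge 16$ vanishes. Theorem \ref{teo2} gives an a priori finiteness bound, and in each such $r$ the same enumeration together with the full list of ideal generators (with additional generators appearing at each quasi-degree 12, 13, 14, $\dots$) forces every surviving class to be non-closed: its exterior derivative is a non-vanishing algebraic restriction of the form $[dx_i\wedge dx_j\wedge dx_k]_g$. This finite but combinatorially heavy calculation is exactly what the Singular script mentioned in the introduction automates.
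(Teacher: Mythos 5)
Your route is genuinely different from the paper's: the paper's ``proof'' of Proposition \ref{basis4,5,6,7} consists entirely of the Singular computation (reduce exact 2-forms $d(m\,dx_i)$ modulo the module $I_g\cdot\Lambda^2+dI_g\wedge\Lambda^1$ of zero algebraic restrictions), whereas you propose the hand computation in the style of \cite{D1}: reduce to $(\KK^4,0)$ by Proposition \ref{prop3.6}, split by quasi-degree via Proposition \ref{prop6.5}, enumerate monomial classes with Lemma \ref{lemma6.2}, and extract the closed classes with Proposition \ref{basisclo}. Your sample computation in quasi-degree $15$ is correct (the two relations you name do cut the four candidates down to a $2$-dimensional space, and only $a_{15}^-$ has vanishing differential as a $3$-form restriction). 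However, there is a genuine gap at the decisive step: the assertion that every closed algebraic restriction of quasi-degree $r\ge 16$ vanishes is exactly what has to be proved, and your argument for it is deferred rather than given. Theorem \ref{teo2} (or Theorem \ref{corteo2}) yields finite dimension, hence the existence of some bound $K(g)$, but no explicit value; so ``in each such $r$ the same enumeration forces every surviving class to be non-closed'' is not a finite procedure as described. Ending with ``this is exactly what the Singular script automates'' hands the step back to the paper's computation instead of supplying a proof. To close this you need an effective bound, e.g.\ a direct argument that every algebraic restriction of a $2$-form (closed or not) of quasi-degree above an explicit threshold is zero, using the ideal generators $x_1x_3-x_2^2$, $x_1x_4-x_2x_3$, $x_2x_4-x_3^2$, $x_1^3-x_3^2$, $x_1^2x_2-x_3x_4$, $x_1^2x_3-x_4^2$ (note you also omitted the quasi-degree $14$ generator), or an argument that vanishing of the new classes over a full window of consecutive quasi-degrees propagates upward.

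A second, smaller, flaw: Theorem \ref{pmqdp} is not a linear-independence criterion --- it is an invariant of the diffeomorphism action on algebraic restrictions and says nothing about linear combinations. The correct tool is Proposition \ref{prop6.5}, which reduces independence to a check in each quasi-degree; your verification that the quasi-degree $11$ and $13$ parts of $\mathscr{A}_0^2(\im(g),\KK^4)$ vanish is the right substance for those two degrees, but you must also verify that the singleton classes in quasi-degrees $9,10,12,14,15$ are nonzero. This is not automatic from ``distinct quasi-degrees'': already in quasi-degree $14$ the relation $d\bigl((x_1x_3-x_2^2)\,dx_1\bigr)$ identifies $[x_1dx_1\wedge dx_3]_g$ with $2[x_2dx_1\wedge dx_2]_g$, and in quasi-degree $15$ there are two relations, so nonvanishing requires the same per-degree computation you perform for spanning. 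With these points repaired (in particular an explicit termination bound), your outline would constitute a valid alternative to the paper's computer-assisted proof.
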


\begin{proof}
This basis is obtained by the following algorithm on Singular software.

\begin{verbatim}
>LIB "all.lib";

>ring r=0,(x1,x2,x3,x4,t),dp;

>ideal J0=x1-t^4,x2-t^5,x3-t^6,x4-t^7;

>ideal J=eliminate(J0,t); // The ideal I(g)
consisting of polynomials vanishing on Im(g)

>ring q=0,(x1,x2,x3,x4,dx1,dx2,dx3,dx4),dp;

>setring q;

>ideal dm=dx1,dx2,dx3,dx4;

>ideal dm3=dm^3;

>def S=superCommutative(5,8,dm3); // The set of
differential forms identifying the 3-forms as 0.
Here we see S as a ring with the operations of
sum and exterior product

>setring S;

>proc extder (def p) // Defining the exterior derivative
on S

{
  def @t = typeof(p);
  if( (@t == "poly") or (@t == "vector") )
  {
    return (dx1*diff(p,x1)+dx2*diff(p,x2)+dx3*diff(p,x3)+
		dx4*diff(p,x4));
 }
  def @result = p;

  int @i = size(@result);

  if( (@t == "ideal") or (@t == "module") )
  {
    @i = ncols(@result);
  }

  for( ; @i > 0; @i-- )
  {
    @result [@i] = extder( @result [@i] );
  }
  return (@result);
};

>ideal Ig=std(imap(r,J)); // ideal I(g) in
S

>ideal dIg=extder(Ig); // ideal spanned by the
exterior derivative of the generators of I(g)


>ideal m=x1,x2,x3,x4;

>ideal dm=extder(m);

>ideal Z=std(Ig*dm^2+dIg*dm); // The set of
zero algebraic restrictions to g

>ideal c2=extder(m*dm); // the set of homogeneous
closed 2-forms of degree 2

>ideal c3=extder(m^2*dm); // the set of homogeneous
closed 2-forms of degree 3

>ideal c4=extder(m^3*dm); // the set of homogeneous
closed 2-forms of degree 4

>NF(c2,Z)+0; // elements of the basis (homogeneous)
of degree 2

>NF(c3,Z)+0; // elements of the basis (homogeneous)
of degree 3

>NF(c4,Z)+0; // verification that the vector space
of algebraic restrictions to g of closed 2-forms is
generated by NF(c2,Z) and NF(c3,Z).
\end{verbatim}
\end{proof}

\begin{proposition}\label{claalg4,5,6,7}
Let $a$ be an algebraic restriction to $g$ of a symplectic form in $(\RR^{2n},0)$, where $n\geq 2$. Then $a$ is diffeomorphic to one of the algebraic restrictions to $g$ of Table \ref{symm4} when $n=2$. For $n=3$ $a$ is diffeomorphic to one of the algebraic restrictions to $g$ in the rows 1-14 of Table \ref{symm6}. Finally, when $n\geq 4$ a is diffeomorphic to one of the algebraic restrictions to $g$ of Table \ref{symm6}.

\begin{table}[H]
\caption{Algebraic restrictions on $\RR^4$}
\label{symm4}
\[
\begin{array}{|l l|}
\hline
{[\om_1]_g}= & a_9+c_1a_{11}^{-} + c_{2}a_{13}^{+}, \,\ c_2 \neq 0 \\
\hline
{[\om_2]_g}= & \pm a_{10}+c_1a_{11}^{-}+c_2a_{12},\,\ c_1,c_2\neq 0\\

{[\om_3]_g}= & \pm a_{10}+c_1a_{12}+c_2a_{13}^{+},\,\ c_1\neq 0\\
\hline
{[\om_4]_g}= & a_{11}^{+}+c_1a_{11}^{-} + c_2a_{13}^{+},\,\ c_1\neq 0, -\frac{3}{2},-\frac{1}{3},-1,\frac{12}{7}\\
{[\om_5]_g}= & a_{11}^{+} - \frac{3}{2}a_{11}^{-} + ca_{12}, \,\ c\neq 0\\

{[\om_6]_g}= & a_{11}^{+} - \frac{3}{2}a_{11}^{-} + ca_{13}^{+}\\
{[\om_7]_g}= & a_{11}^{+} -a_{11}^{-} + c_1a_{13}^{+} + c_2a_{14}\\
{[\om_8]_g}= & a_{11}^{+}-\frac{1}{3}a_{11}^{-}+c_1a_{13}^{+}+c_2a_{15}^{-}\\

{[\om_9]_g}= & a_{11}^{+}+\frac{12}{7}a_{11}^{-}+ca_{13}^{-} \\
\hline
\end{array}
\]
\end{table}

\FloatBarrier
\begin{table}[H]
\caption{Algebraic restrictions on $\RR^{\geq 6}$}
\label{symm6}
\[
\begin{array}{|l l|}
\hline
{[\om_1]_g}= & a_9+c_1a_{11}^{-} + \,c_{2}a_{13}^{+}\\
\hline
{[\om_2]_g}= & \pm a_{10}+c_1a_{11}^{-}+c_2a_{12}, \,\ c_1\neq 0\\

{[\om_3]_g}= & \pm a_{10}+c_1a_{12}+c_2a_{13}^{+}\\
\hline
{[\om_4]_g}= & a_{11}^{+}+c_1a_{11}^{-} + c_2a_{13}^{+},\,\ c_1\neq -\frac{3}{2},-\frac{1}{3},-1,\frac{12}{7}\\
{[\om_5]_g}= & a_{11}^{+} - \frac{3}{2}a_{11}^{-} + ca_{12}, \,\ c\neq 0 \\

{[\om_6]_g}= & a_{11}^{+} - \frac{3}{2}a_{11}^{-} + ca_{13}^{+}\\

{[\om_7]_g}= & a_{11}^{+} -a_{11}^{-} + c_1a_{13}^{+} + c_2a_{14}\\
{[\om_8]_g}= & a_{11}^{+} - \frac{1}{3}a_{11}^{-} +c_{1}a_{13}^{+} +c_2a_{15}^{-}\\
{[\om_9]_g}= & a_{11}^{+}+\frac{12}{7}a_{11}^{-}+ca_{13}^{-}\\
\hline
{[\om_{10}]_g}= & a_{11}^{-}+ ca_{13}^{+} \\
\hline
{[\om_{11}]_g}= & \pm a_{12}+c a_{13}^{+}\\
\hline
{[\om_{12}]_g}= & a_{13}^{+}+ca_{13}^{-},c\neq -1,2\\
{[\om_{13}]_g}= & a_{13}^{+}+2a_{13}^{-}+ca_{14}\\
{[\om_{14}]_g}= & a_{13}^{+}-a_{13}^{-}+ca_{15}^{-}\\
{[\om_{15}]_g}= & a_{13}^{-}\\
\hline
{[\om_{16}]_g}= & \pm a_{14}\\
\hline
{[\om_{17}]_g}= & a_{15}^{-}\\
\hline
{[\om_{18}]_g}= & 0\\
\hline
\end{array}
\]
\end{table}

\end{proposition}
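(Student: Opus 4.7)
My plan is to combine the filtering of Proposition \ref{mergsymp} with the algebraic-restriction machinery of Theorems \ref{genvect} and \ref{teo6.3}. Writing any algebraic restriction as $a=\sum \alpha_\bullet a_\bullet$ in the basis of Proposition \ref{basis4,5,6,7}, I first determine which $a$ admit a symplectic representative in $(\RR^{2n},0)$. The minimal smooth submanifold containing $\im(g)$ is $(S,0)=\{x_5=\cdots=x_{2n}=0\}$, of dimension $r=4$, and only the basis elements $a_9,a_{10},a_{11}^{\pm},a_{12},a_{13}^{+}$ (those represented by constant-coefficient $2$-forms) contribute to $\theta|_{T_0 S}$. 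The rank condition $\rank(\theta|_{T_0 S})\geq 8-2n$ is then vacuous for $n\geq 4$; reduces to the leading coefficient vector being nonzero for $n=3$ (ruling out rows 15--18 of Table \ref{symm6}); and becomes the Pfaffian condition $\alpha_9\alpha_{13}^{+}-\alpha_{10}\alpha_{12}+\alpha_{11}^{+}\alpha_{11}^{-}\neq 0$ for $n=2$ (ruling out rows 10--18).

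By Theorem \ref{pmqdp}, the projective class of the minimum quasi-degree part $a^{(r)}$ is a diffeomorphism invariant, so I stratify the remaining algebraic restrictions by the pair (minimal quasi-degree $r$, projective class of $a^{(r)}$). This produces exactly the row partition of Tables \ref{symm4}--\ref{symm6}, with leading parts running through $a_9$, $\pm a_{10}$, the one-parameter family $a_{11}^{+}+\alpha a_{11}^{-}$, $a_{11}^{-}$ alone, $\pm a_{12}$, the family $a_{13}^{+}+\alpha a_{13}^{-}$, $a_{13}^{-}$ alone, $\pm a_{14}$, $a_{15}^{-}$, and $0$.

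Within each stratum I will normalise the higher-quasi-degree terms using liftable vector fields. Since every integer $\geq 4$ lies in the semigroup $(4,5,6,7)$, Theorem \ref{genvect} supplies polynomial vector fields $X_s$ with $X_s\circ g=t^{s+1}\,dg/dt$ for every $s\in\{0,4,5,6,\ldots\}$; I will exhibit explicit representatives and tabulate the Lie derivatives $\mathcal L_{X_s}a_j$ on the nine basis elements for the low $s$ that matter. Together with the scaling $\phi(t)=ct$ from Lemma \ref{prepmqdb} (which multiplies a quasi-degree-$r$ algebraic restriction by $c^r$, normalising the leading coefficient to $1$ over $\CC$ or to $\pm 1$ over $\RR$ when $r$ is even), Theorem \ref{teo6.3} then lets me eliminate higher-quasi-degree coefficients one at a time. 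The exceptional rows of the tables appear at those ratios $\alpha$ of the leading part for which the relevant $\mathcal L_{X_s}a^{(r)}$ drops rank: in the $a_{11}$-leading stratum I expect resonances at $\alpha=-\tfrac{3}{2},-\tfrac{1}{3},-1,\tfrac{12}{7}$, and in the $a_{13}$-leading stratum at $\alpha=2,-1$. The unremovable coefficients that survive are exactly the moduli $c,c_1,c_2,c_3$ of the statement.

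Distinctness of the resulting normal forms, both across different rows and for different moduli within a row, follows from Theorem \ref{pmqdp} together with the symplectic invariants $\mu_{sympl}$, $\iota$ and $Lt$ of Propositions \ref{sympm}, \ref{sympi}, \ref{lto}, as recorded in the right-hand columns of Tables \ref{r4}--\ref{r>4}. The main obstacle I anticipate is the normalisation step: nine basis elements are concentrated in the narrow quasi-degree window $9$--$15$, so Lie-derivative images collide frequently and the exact resonance values must be detected by explicit linear algebra at each quasi-degree. Extending the Singular script used in Proposition \ref{basis4,5,6,7} to automate these computations will probably be essential to ensure that no subcase is overlooked.
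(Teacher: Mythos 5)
Your overall plan coincides with the paper's: filter by Proposition \ref{mergsymp} (your rank computation on $T_0S$ is right and is exactly what separates Table \ref{symm4}, rows 1--14 of Table \ref{symm6}, and the full Table \ref{symm6}), stratify by the projectivised minimum quasi-degree part, normalise the leading coefficient by the scalings of Lemma \ref{prepmqdb}, and remove higher terms with Theorem \ref{genvect} and Theorem \ref{teo6.3}; your resonance values $-\tfrac32,-1,-\tfrac13,\tfrac{12}{7}$ and $2,-1$ are the correct ones. However, there is a step your procedure cannot reach. Theorem \ref{teo6.3} only uses the Lie derivatives $\mathcal L_X a_s$ of the \emph{leading} basis element, and in two strata this is insufficient. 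In the $a_{10}$-stratum one has $\mathcal L_{X_2}a_{10}=0$ and $\mathcal L_{X_3}a_{10}=-4a_{13}^{+}+6a_{13}^{-}$, which spans only one direction in quasi-degree $13$; so after the admissible basis changes Theorem \ref{teo6.3} terminates at the three-parameter family $\pm a_{10}+c_1a_{11}^{-}+c_2a_{12}+c_3a_{13}^{+}$. Similarly, at the resonance $\alpha=-\tfrac32$ it terminates at $a_{11}^{+}-\tfrac32 a_{11}^{-}+c_1a_{12}+c_2a_{13}^{+}$. The tables claim more: when $c_1\neq 0$ the residual quasi-degree-$13$ coefficient can also be removed, which is what splits these strata into rows 2 versus 3 and rows 5 versus 6 and reduces the number of moduli by one.

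That extra reduction uses the Lie derivatives of the \emph{subleading} terms (e.g.\ $\mathcal L_{X_2}a_{11}^{-}=4a_{13}^{+}+7a_{13}^{-}$), which Theorem \ref{teo6.3} does not see. The paper supplies it by a separate Moser-homotopy argument (Lemma \ref{lemma4,5,6,7}): interpolate $A_t$ between the two candidate normal forms, posit $\eta_t=\sum_i b_i(t)X_i$, and solve the linear system coming from $\mathcal L_{\eta_t}A_t=c\,a_{13}^{+}$; the system is solvable precisely because $c_1\neq 0$, and liftability of $\eta_t$ guarantees the resulting flow preserves $\im(g)$. Without this ingredient your argument, as written, produces a coarser classification with a spurious modulus in each of these two strata and therefore does not establish the stated tables. (A minor additional point: since images such as $4a_{11}^{+}+6a_{11}^{-}$ are combinations rather than single basis elements, the "one coefficient at a time" eliminations via Theorem \ref{teo6.3} must be preceded by the corresponding changes of quasi-homogeneous basis, as the paper does explicitly in the $a_{11}$-stratum; this is routine but should be stated.)
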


\begin{proof}
By Theorem \ref{genvect} we have to consider  vector fields $X_s$ such that $X_s\circ g=t^{s+1}dg/dt$ for $s=0,\ldots,6$. They are
\[
\begin{array}{l}
X_0=4x_1\frac{\partial}{\partial x_1}+5x_2\frac{\partial}{\partial x_2}+6x_3\frac{\partial}{\partial x_3}+7x_4\frac{\partial}{\partial x_4}\\[0.2cm]
X_1=4x_2\frac{\partial}{\partial x_1}+5x_3\frac{\partial}{\partial x_2}+6x_4\frac{\partial}{\partial x_3}+7x_1^{2}\frac{\partial}{\partial x_4}\\[0.2cm]
X_2=4x_3\frac{\partial}{\partial x_1}+5x_4\frac{\partial}{\partial x_2}+6x_1^2\frac{\partial}{\partial x_3}+7x_1x_2\frac{\partial}{\partial x_4}\\[0.2cm]
X_3=4x_4\frac{\partial}{\partial x_1}+5x_1^2\frac{\partial}{\partial x_2}+6x_1x_2\frac{\partial}{\partial x_3}+7m_{10}\frac{\partial}{\partial x_4}\\[0.2cm]
X_4=4x_1^2\frac{\partial}{\partial x_1}+5{x_1}x_2\frac{\partial}{\partial x_2}+6m_{10}\frac{\partial}{\partial x_3}+7m_{11}\frac{\partial}{\partial x_4}\\[0.2cm]
X_5=4x_1x_2\frac{\partial}{\partial x_1}+5m_{10}\frac{\partial}{\partial x_2}+6m_{11}\frac{\partial}{\partial x_3}+7m_{12}\frac{\partial}{\partial x_4}\\[0.2cm]
X_6=4m_{10}\frac{\partial}{\partial x_1}+5m_{11}\frac{\partial}{\partial x_2}+6m_{12}\frac{\partial}{\partial x_3}+7m_{13}\frac{\partial}{\partial x_4},
\end{array}
\]
where $m_i$ is  monomial with quasi-degree $i$.

The next table provides the infinitesimal action of these vector fields when $m_{10}=x_1x_3,m_{11}=x_1x_4,m_{12}=x_1^3$ and $m_{13}=x_1^2x_2$ on the elements of the basis.
\begin{table}[H]
\caption{Infinitesimal actions on algebraic restrictions of closed 2-forms to $g$}
\label{liederivate}
\[
\begin{array}{|c|c c c c|}
\hline
{\mathcal L}_{X_i}a_j & a_9 & a_{10} & a_{11}^{+} & a_{11}^{-}\\
\hline
X_0=E & 9a_9 & 10a_{10} & 11a_{11}^{+} & 11a_{11}^{-}\\
X_1 & 5a_{10} & 4a_{11}^{+} + 6a_{11}^{-} & 6a_{12} & 4a_{12}\\
X_2 & 5a_{11}^{-}-4a_{11}^{+} & 0 & -5a_{13}^{+}-12a_{13}^{-} & 4a_{13}^{+}+7a_{13}^{-}\\
X_3 & -4a_{12} & -4a_{13}^{+}+6a_{13}^{-} & 7a_{14} & 7a_{14} \\
X_4 & 13a_{13}^{-} & 14a_{14} & 5a_{15}^{-} & 15a_{15}^{-}\\
X_5 & 7a_{14} & 10a_{15}^{-} & 0 & 0 \\
X_6 & 5a_{15}^{-} & 0 & 0 & 0 \\
\hline
\end{array}
\]
\end{table}

\[
\begin{array}{|c|c c c c c|}
\hline
{\mathcal L}_{X_i}a_j & a_{12} & a_{13}^{+} & a_{13}^{-} & a_{14} & a_{15}^{-}\\
\hline
X_0 & 12a_{12} & 13a_{13}^{+} & 13a_{13}^{-} & 14a_{14} & 15a_{15}^{-}\\
X_1 & 5a_{13}^{+}-14a_{13}^{-} & -14a_{14} & 7a_{14} & 10a_{15}^{-} & 0 \\
X_2 & -\frac{7}{2}a_{14} & 5a_{15}^{-} & 5a_{15}^{-} & 0 & 0 \\
X_3 & 10a_{15}^{-} & 0 & 0 & 0 & 0 \\
X_4 & 0 & 0 & 0 & 0 & 0 \\
X_5 & 0 & 0 & 0 & 0 & 0 \\
X_6 & 0 & 0 & 0 & 0 & 0 \\
\hline
\end{array} \vspace{0.5cm}
\]

Due to Proposition \ref{mergsymp} we can write:
\[
a=t_1a_9+t_2a_{10}+t_3a_{11}^{+}+t_4a_{11}^{-}+t_5a_{12}+t_6a_{13}^{+}+t_7a_{13}^{-}+t_8a_{14}+t_9a_{15}^{-}
\]
where $t_i\in \RR$, $i=1,\ldots,9$. Suppose $t_1\neq 0$. Consider $\Psi_{1}:(\RR^4,0)\to (\RR^4,0)$ the local symmetry of $g$ given by
\[
\Psi_1(x_1,x_2,x_3,x_4)=(t_1^{-\frac{4}{9}}x_1,t_1^{-\frac{5}{9}}x_2,t_1^{-\frac{6}{9}}x_3,t_1^{-\frac{7}{9}}x_4).
\]
We have $\Psi_{1}^{*}a_9=\frac{1}{t_1} a_9$. Thus we can consider $t_1=1$. By Theorem \ref{teo6.3} $a$ is diffeomorphic to $a_9+c_1a_{11}^{-}+c_2a_{13}^{+}$, $c_1,c_2\in \RR$.

If $c_2\neq 0$, $a_9+c_1a_{11}^{-}+c_2a_{13}^{+}=[dx_1\wedge dx_2+c_1dx_1\wedge dx_4+c_2dx_3\wedge dx_4]_g$ and thus $a$ is diffeomorphic to $[\om_1]_g$ of Table \ref{symm4}. If $c_2=0$, $a_9+c_1a_{11}^{-}=[dx_1\wedge dx_2+c_1dx_1\wedge dx_4]_g$ and therefore is diffeomorphic to $[\om_1]_g$  of Table \ref{symm6}.

Suppose $t_1= 0$ and $t_2\neq 0$. Consider the local symmetry of $g$ $\Psi_2:(\RR^4,0)\to (\RR^4,0)$ given by
\[
\Psi_2(x_1,x_2,x_3,x_4)=(|t_2|^{-\frac{4}{10}}x_1,|t_2|^{-\frac{5}{10}}x_2,|t_2|^{-\frac{6}{10}}x_3,|t_2|^{-\frac{7}{10}}x_4).
\]
Then we can consider $t_2=\pm 1$. By Theorem \ref{teo6.3} $a$ is diffeomorphic to $\pm a_{10}+c_1a_{11}^{-}+c_2a_{12}+c_3a_{13}^{+}$, where $c_1,c_2,c_3\in \RR$. If $c_1\neq 0$ we prove in Lemma \ref{lemma4,5,6,7} that $a$ is diffeomorphic to $\pm a_{10}+c_1a_{11}^{-}+c_2a_{12}$. Thus $a$ is diffeomorphic to either $[\om_2]_g$ or $[\om_3]_g$ of Table \ref{symm4} if $a$ has a symplectic representative on $(\RR^4,0)$, or of Table \ref{symm6} if $a$ has a symplectic representative on $(\RR^{2n},0)$, $n\geq 3$.

Suppose $t_1=\cdots=t_{i-1}=0$ and $t_i\neq 0$, where $i\in \{ 4,5,7,8,9 \}$. Let $u$ be the smallest positive integer such that $a^{(u)}\neq 0$. Consider the local symmetries of $g$ $\Psi_i:(\RR^4,0)\to (\RR^4,0)$ defined by
\[
\begin{array}{l}
\Psi_i(x_1,x_2,x_3,x_4)=(t_i^{-\frac{4}{u}}x_1,t_i^{-\frac{5}{u}}x_2,t_i^{-\frac{6}{u}}x_3,t_i^{-\frac{7}{u}}x_4), \,\ \text{if} \,\ i\in \{ 4,7,9 \} \,\ \text{and}  \\[0.2cm]
\Psi_i(x_1,x_2,x_3,x_4)=(|t_i|^{-\frac{4}{u}}x_1,|t_i|^{-\frac{5}{u}}x_2,|t_i|^{-\frac{6}{u}}x_3,|t_i|^{-\frac{7}{u}}x_4), \,\ \text{if} \,\ i\in \{ 5,8 \}.
\end{array}
\]
Then we can consider $t_i=1$ if $i\in \{4,7,9 \}$ or $t_i =\pm 1$ if $i\in \{ 5,8\}$. Similarly, if $a$ has a symplectic representative on $(\RR^{6},0)$ then $a$ is diffeomorphic to either $[\om_{10}]_g$ or $[\om_{11}]_{g}$ of Table \ref{symm6} or if $a$ has a symplectic representative on $(\RR^{2n},0)$ for $n\geq 3$, then $a$ is diffeomorphic to $[\om_l]_g$ of Table \ref{symm6} for $l\in \{10,11,15,16,17 \}$.

Suppose $t_1=t_2=0$ and $t_3\neq 0$. As above we can assume $t_3=1$.
We apply Theorem \ref{teo6.3} for the following basis
\[
a_{9},a_{10},a_{11}^{-},a_{11}^{+}+t_4a_{11}^{-},a_{12},a_{13}^{+},a_{13}^{-},a_{14},a_{15}^{-}.
\]
Then $a=a_{11}^{+}+t_{4}a_{11}^{-}+t_5a_{12}+t_6a_{13}^{+}+t_{7}a_{13}^{-}+t_8a_{14}+t_9a_{15}^{-}$ is diffeomorphic to one of the following algebraic restrictions.
\begin{description}
	\item[(i)] $a_{11}^{+}+c_1a_{11}^{-}+c_2a_{13}^{+}, \,\ c_1\neq -\frac{3}{2},-1,-\frac{1}{3},\frac{12}{7}$;
	
	\item[(ii)] $a_{11}^{+}-\frac{3}{2}a_{11}^{-}+ca_{12}, \,\ c\neq 0$;
	
	\item[(iii)] $a_{11}^{+}-\frac{3}{2}a_{11}^{-}+ca_{13}^{+}$;
	
	\item[(iv)] $a_{11}^{+}-a_{11}^{-}+c_1a_{13}^{+}+c_2a_{14}$;
	
	\item[(v)] $a_{11}^{+}-\frac{1}{3}a_{11}^{-}+c_1a_{13}^{+}+c_2a_{15}^{-}$;
	
	\item[(vi)] $a_{11}^{+}+\frac{12}{7}a_{11}^{-}+ca_{13}^{-}$.
\end{description}

We shall see the cases (ii) and (iii) more carefuly. Suppose $a$ is of the form $a=a_{11}^{+}-\frac{3}{2}a_{11}^{-}+t_5a_{12}+t_6a_{13}^{+}+t_7a_{13}^{-}+t_8a_{14}+t_9a_{15}^{-}$. According to Theorem \ref{teo6.3}, $a$ is diffeomorphic to $a_{11}^{+}-\frac{3}{2}a_{11}^{-}+c_1a_{12}+c_2a_{13}^{+}$, $c_1,c_2\in \RR$. If $c_1\neq 0$, we prove in Lemma \ref{lemma4,5,6,7} that $a$ is diffeomorphic to $a_{11}^{+}-\frac{3}{2}a_{11}^{-}+ca_{12}$.

Then $a$ is diffeomorphic to $[\om_i]_g$, $i=4,\ldots,9$, of Table \ref{symm4}, if $a$ has a symplectic representative on $(\RR^4,0)$ or of Table \ref{symm6} if $a$ has a symplectic representative on $(\RR^{2n},0)$, $n\geq 3$.

When $t_i=0$, $i<6$ and $t_6\neq 0$ as above $a$ is diffeomorphic to $[\om_i]_g$ for $i=12,13,14$ of Table \ref{symm6}. Finally, when $t_i=0$, for all $i$, then $a=[dx_1\wedge dx_{n+1}+dx_2\wedge dx_{n+2}+\cdots +dx_n\wedge dx_{2n}]_g=[\om_{18}]_g$ of Table \ref{symm6}.
\end{proof}

\begin{lemma}\label{lemma4,5,6,7}
\begin{description}
	\item[(i)] The algebraic restriction $a=a_{11}^{+}-\frac{3}{2}a_{11}^{-}+c_1a_{12}+c_2a_{13}^{+}$, $c_1\neq 0$ is diffeomorphic to $[\om_5]_g$ of  Tables \ref{symm4} and \ref{symm6}.
 	
	\item[(ii)] The algebraic restriction $\tilde{a}=\pm a_{10}+c_1a_{11}^{-}+c_2a_{12}+c_3a_{13}^{+}$, $c_1\neq 0$ is diffeomorphic to $[\om_2]_g$ of  Tables \ref{symm4} and \ref{symm6}.
\end{description}
\end{lemma}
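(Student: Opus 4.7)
The plan is to use a Moser-style homotopy argument: I connect the given algebraic restriction to the proposed normal form by an affine path, then integrate a time-dependent family of liftable quasi-homogeneous vector fields to produce the required local symmetry of $g$. The building blocks are the liftable fields $X_0,\dots,X_5$ from the proof of Proposition \ref{claalg4,5,6,7} and the Lie-derivative formulas in Table \ref{liederivate}; the equality $K(g)=15$ turns everything into a finite linear problem.

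For part (i), consider the path
\[
a_t = \bigl(a_{11}^{+}-\tfrac{3}{2}a_{11}^{-}\bigr) + c_1 a_{12} + (1-t)\,c_2 a_{13}^{+}, \qquad t\in[0,1],
\]
from $a_0=a$ to $a_1=[\om_5]_g$. By Moser's trick it is enough to exhibit a family $X_t$ of liftable quasi-homogeneous vector fields with $\mathcal{L}_{X_t}a_t = c_2 a_{13}^{+}$: the flow of $X_t$ is then a local symmetry of $g$ carrying $a$ to $[\om_5]_g$. I would look for $X_t=\alpha X_2 + \beta X_1 + \gamma(t)X_3 + \delta(t)X_4$ and split the equation by quasi-degree. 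At quasi-degree $13$, only $X_2$ acting on $a_{11}^{+}-\tfrac{3}{2}a_{11}^{-}$ and $X_1$ acting on $c_1 a_{12}$ contribute, and Table \ref{liederivate} gives the system
\[
-11\alpha + 5 c_1 \beta = c_2, \qquad -\tfrac{45}{2}\alpha - 14 c_1 \beta = 0,
\]
whose determinant $\tfrac{533}{2}c_1$ is nonzero by hypothesis, so $\alpha,\beta$ are uniquely determined constants. At quasi-degrees $14$ and $15$, the conditions reduce to single linear equations for $\gamma(t)$ and $\delta(t)$, each solvable because $\mathcal{L}_{X_3}(a_{11}^{+}-\tfrac{3}{2}a_{11}^{-}) = -\tfrac{7}{2}a_{14}$ and $\mathcal{L}_{X_4}(a_{11}^{+}-\tfrac{3}{2}a_{11}^{-}) = -\tfrac{35}{2}a_{15}^{-}$ have nonzero coefficients; quasi-degrees above $15$ are automatic.

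Part (ii) proceeds identically. With $\tilde{a}_t = \pm a_{10} + c_1 a_{11}^{-} + c_2 a_{12} + (1-t)\,c_3 a_{13}^{+}$, I would restrict $X_t$ to lie in $\operatorname{span}\{X_2,X_3,X_4,X_5\}$: using $X_0$ or $X_1$ would force unwanted contributions in quasi-degrees $10$ or $11$ that the right-hand side $c_3 a_{13}^{+}$ cannot absorb. The quasi-degree $13$ conditions become
\[
\mp 4\alpha + 4 c_1 \beta = c_3, \qquad \pm 6\alpha + 7 c_1 \beta = 0,
\]
of determinant $\mp 52 c_1 \neq 0$ under $c_1\neq 0$, and quasi-degrees $14$ and $15$ then fix the coefficients of $X_4$ and $X_5$ in turn. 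The main obstacle, in both parts, is exactly this non-degeneracy at quasi-degree $13$: the determinant is proportional to $c_1$, and collapses precisely when $c_1=0$, which is why the hypothesis $c_1\neq 0$ appears. Once it is established, the rest is a routine finite cascade of solvable linear equations, and integrating the resulting time-dependent liftable vector field over $[0,1]$ yields the required local symmetries of $g$ transforming $a$ into $[\om_5]_g$ and $\tilde{a}$ into $[\om_2]_g$.
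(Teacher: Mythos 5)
Your proposal is correct and follows essentially the same route as the paper: a Moser homotopy along the affine path, with a time-dependent liftable field in $\operatorname{span}\{X_1,\dots,X_4\}$ (resp.\ $\{X_2,\dots,X_5\}$), reduced degree by degree to a linear system whose quasi-degree $13$ block is nondegenerate precisely because its determinant is proportional to $c_1$. The only difference is presentational (degree-by-degree cascade versus the paper's single system), and you additionally write out case (ii), which the paper dismisses as ``very similar''; your computed systems and determinants $\tfrac{533}{2}c_1$ and $\mp 52c_1$ agree with Table \ref{liederivate}.
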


\begin{proof}
The proofs of the itens (i) and (ii) are very similar. We show (i).
We use Moser's homotopy method. Let $A_t= a_{11}^{+}-\frac{3}{2}a_{11}^{-}+c_1a_{12}+(1-t)c_2a_{13}^{+}$, where $t\in [0,1]$. Suppose the existence of a family of local symmetries of $g$ $\Phi_t:(\RR^4,0)\to (\RR^4,0)$, $t\in [0,1]$ such that

\begin{equation}\label{lemaposcla1}
{\Phi_t}^{*}A_t=a, \,\ \Phi_0=Id.
\end{equation}

Consider $\eta_t$ the vector field such that $\frac{d}{dt}\Phi_t=\eta_t\circ \Phi_t$. Differentiating (\ref{lemaposcla1}) we obtain
\[
{\mathcal L}_{\eta_t}A_t=c_2a_{13}^{+}.
\]
Suppose there exist smooth functions $b_i:[0,1]\to \RR$, $i=1,\ldots,4$, such that
\[
\eta_t=\displaystyle \sum_{i=1}^{4}b_i(t)X_i,
\]
where the $X_i's$ are vector fields of Table \ref{liederivate}, $i=1,\ldots,4$. We have
\[
\begin{array}{l l}
c_2a_{13}^{+}= & [5c_1b_1(t)-11b_2(t)]a_{13}^{+}+[-14c_1b_1(t)-\frac{45}{2}b_2(t)]a_{13}^{-}\vspace{0.2cm} \\
& +[-14(1-t)c_2b_1(t)-\frac{7}{2}c_1b_2(t)-\frac{7}{2}b_3(t)]a_{14}\vspace{0.2cm} \\
&[5(1-t)c_2b_2(t)+10c_1b_3(t)- \frac{35}{2}b_4(t)]a_{15}^{-}.
\end{array}
\]
The following system
\[
\left\{
\begin{array}{l}
5c_1b_1(t)-11b_2(t)=c_2 \vspace{0.2cm} \\
-14c_1b_1(t)-\frac{45}{2}b_2(t)=0 \vspace{0.2cm} \\
-14(1-t)c_2b_1(t)-\frac{7}{2}c_1b_2(t)-\frac{7}{2}b_3(t)=0 \vspace{0.2cm} \\
5(1-t)c_2b_2(t)+10c_1b_3(t)- \frac{35}{2}b_4(t)=0
\end{array}
\right.
\vspace{0.2cm}
\]
clearly has solution. As $\eta_t$ is liftable over $g$ the family of diffeomorphisms $\Phi_t$ associated to $\eta_t$ preserves $\im(g)$ and $\Phi_{t}^{*}A_t=a$, for all $t\in [0,1]$.
%
\end{proof}


\noindent \textbf{Proof of Theorem \ref{cla4,5,6,7}:} We use Tables \ref{symm4} and \ref{symm6} to obtain Tables \ref{r4} and \ref{r>4}.
 Let $\Phi:(\RR^{2n},0)\to (\RR^{2n},0)$ and $\phi:(\RR,0)\to (\RR,0)$ be germs of diffeomorphisms such that $\Phi\circ g=f\circ \phi$, where $g(t)=(t^4,t^5,t^6,t^7,0,\ldots,0)$. Consider $\sigma=\Phi^{*}\om$. Then
\[
\Phi^{*}[\om]_f=[\Phi^{*}\om]_{\Phi^{-1}\circ f}=[\sigma]_{g\circ \phi^{-1}}=[\sigma]_g.
\]

Due to Proposition \ref{claalg4,5,6,7},  $[\sigma]_g$ is diffeomorphic to one of the algebraic restrictions $[\om_i]_g$ of Tables \ref{symm4} and \ref{symm6}. Let $\rho_i:(\RR^{2n},0)\to (\RR^{2n},0)$ be the local symmetry of $g$ such that
\[
\rho_i^{*}[\sigma]_g=[\om_i]_g.
\]
By the classical Darboux Theorem there exists a germ of diffeomorphism $\Psi_i:(\RR^{2n},0)\to (\RR^{2n},0)$ such that $\Psi_{i}^{*}\om_i=\om$. Then
\[
(\Phi\circ \rho_i \circ \Psi_i)^{*}[\om]_f= \Psi_{i}^{*}\circ \rho_i^{*}\circ \Phi^{*}[\om]_f=[\om]_{\Psi_i^{-1}\circ g},
\]
that is, $[\om]_f$ and $[\om_i]_{\Psi_{i}^{-1}\circ g}$ are diffeomorphic. By Corollary \ref{corteoa}, $f$ and $\Psi_{i}^{-1}\circ g$ are symplectomorphic.

%
For example, the algebraic restriction $[\om_1]_g$ of Table \ref{symm4} is represented by
\[
\om_1=dx_1\wedge dx_2 + c_1dx_1\wedge dx_4+c_2dx_3\wedge dx_4= dx_1\wedge (dx_2+c_1dx_4)+dx_3\wedge (c_2dx_4),
\]
where $c_1,c_2\in \RR$ with $c_2\neq 0$.

Consider $F:(\RR^4,0)\to (\RR^4,0)$ defined by $F(x)=(x_1,x_2+c_1x_4,x_3,c_2x_4)$. Let $\Psi=F^{-1}$. Note that $\Psi^{*}\om_1=\om$. Therefore $f$ is symplectomorphic to
\[
 (\Psi^{-1}\circ g)(t)=(F\circ g)(t)=(t^4,t^5+c_1t^7,t^6,c_2t^7).
\]

The symplectic multiplicity and the index of isotropy follow straightforward from Theorem \ref{sympm} and Theorem \ref{sympi} respectively.
The Lagrangian tangency order can be calculated using Proposition \ref{lto} for the curves 15,16,17 and 18 from Table \ref{r>4}, the others curves we calculate by definition.

We observe that although some curves in Tables \ref{r4} and \ref{r>4}  do have the same symplectic invariants they are not symplectomorphic  by Proposition \ref{pmqdp} since the corresponding algebraic restrictions do not have proportional minimum quasi-degree part.

Now we prove that the parameters $c,c_1,c_2$ are moduli in the normal forms. The proofs are very similar in all cases. As an example we consider the orbit of $[\om_2]_g=\pm a_{10}+c_1a_{11}^{-}+c_2a_{12}$, $c_1\neq 0$ of Table \ref{symm4}. The tangent space to the orbit of $[\om_2]_g$  is spanned by the algebraic restrictions $\pm 10a_{10}+11c_1a_{11}^{-}+12c_2a_{12}, \pm (4a_{11}^{+}+6a_{11}^{-})+4c_1a_{12},a_{13}^{+},a_{13}^{-},a_{14},a_{15}^{-}$. One can easily see that  $a_{11}^{-}$ and $a_{12}$ are not in this tangent space. Therefore $c_1$ and $c_2$ are moduli.
\begin{flushright}
$\Box$
\end{flushright}


\section{Symplectic singularities of curves with semigroup $(4,5,6)$}

In this section we classify parameterized curve-germs with semigroup $(4,5,6)$. We follow the steps described in Section 4.

\begin{theorem}\label{class4,5,6}
\begin{description}
	\item[(i)] Let $(\RR^{2n},\om_0=\sum dp_i\wedge dq_i)$ be the symplectic space with canonical coordinates $(p_1,q_1,\ldots,p_n,q_n)$. Then the germ of a curve $f:(\RR,0)\to (\RR^{2n},0)$ with semigroup $(4,5,6)$ is symplectically equivalent to one and only one of the curves in the second column and rows 1-3 of Table \ref{4,5,6} for $n=2$. When $n>2$ $f$ is symplectically equivalent to one and only one of the curves in the second column of Table \ref{4,5,6}.
	
	\item[(ii)] The parameters are $c,c_1,c_2$ moduli.
\end{description}
\end{theorem}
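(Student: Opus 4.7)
The strategy is to mimic the treatment of the $(4,5,6,7)$ case in Section 4. By Proposition \ref{a-eq}, any curve $f$ with semigroup $(4,5,6)$ is $\mathcal{A}$-equivalent to the quasi-homogeneous model $g(t)=(t^4,t^5,t^6,0,\ldots,0)$. By Corollary \ref{corteoa}, the symplectic classification of such curves is equivalent to the classification of algebraic restrictions $[\omega]_g$ of symplectic forms under the action of local symmetries of $g$, and by Proposition \ref{mergsymp} every algebraic restriction to $g$ of a closed $2$-form has a symplectic representative in $(\RR^{2n_0},0)$ for an appropriate $n_0 \geq 2$.

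First I would compute a basis of the (finite-dimensional, by Theorem \ref{teo2}) space of algebraic restrictions of closed $2$-forms to $g$, using the same Singular script as in the proof of Proposition \ref{basis4,5,6,7} but with the ideal $J_0=\langle x_1-t^4,x_2-t^5,x_3-t^6\rangle$. This yields a quasi-homogeneous basis whose elements are of the form $[dx_i\wedge dx_j]_g$ and $[x_1\, dx_i\wedge dx_j]_g$ for low quasi-degrees; Proposition \ref{basisclo} together with Lemma \ref{lemma6.2} then narrows down the closed basis elements.

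Next I would write down the generating liftable vector fields $X_s$ with $X_s\circ g=t^{s+1}\,dg/dt$ for $s=0,\ldots,K(g)-r_{\min}$, where $K(g)$ is the bound of Theorem \ref{corteo2}; since the semigroup has three generators $4,5,6$, each $X_s$ has a simpler shape than in the $(4,5,6,7)$ case, with $X_0$ being the Euler field of weights $(4,5,6)$. Tabulating $\mathcal{L}_{X_s}a_j$ on the basis produces an infinitesimal-action table analogous to Table \ref{liederivate}. Applying Theorem \ref{teo6.3} case-by-case, starting from the basis element of lowest quasi-degree with nonzero coefficient, I would reduce a general algebraic restriction
\[
a=\sum t_j a_j
\]
to a small list of normal forms parameterized by a few coefficients $c,c_1,c_2$; the dilational symmetries $\Psi_i(x_1,x_2,x_3)=(|t_i|^{-4/u}x_1,|t_i|^{-5/u}x_1,|t_i|^{-6/u}x_3)$ of $g$ (with $u$ the smallest quasi-degree appearing in $a$) are used to normalize the leading coefficient to $\pm 1$, exactly as in Proposition \ref{claalg4,5,6,7}.

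The main obstacles are (a) eliminating the auxiliary coefficients left over in exceptional orbits where Theorem \ref{teo6.3} does not immediately apply (analogous to the $-\tfrac32,-\tfrac13,-1,\tfrac{12}{7}$ exceptional values that appeared for $(4,5,6,7)$), and (b) handling cases where one wants to trade a coefficient of $a_{13}^{+}$ for $a_{12}$ or vice versa. For these I would use Moser's homotopy method as in Lemma \ref{lemma4,5,6,7}: setting $A_t$ interpolating between the two candidate normal forms and seeking $\eta_t=\sum b_i(t)X_i$ liftable over $g$ with $\mathcal{L}_{\eta_t}A_t$ equal to the derivative term, reducing the problem to the solvability of a linear system in the $b_i(t)$ with coefficients depending on the moduli.

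Once the list of normal forms of algebraic restrictions is fixed, each $[\omega_i]_g$ is realized by an explicit closed $2$-form that, by an affine change of coordinates (pulling $\omega_i$ back to Darboux form), produces the corresponding normal form $f_i$ of the curve, following the procedure at the end of the proof of Theorem \ref{cla4,5,6,7}. The symplectic multiplicity and the index of isotropy follow from Propositions \ref{sympm} and \ref{sympi}; the Lagrangian tangency order is computed either directly from the definition or via Proposition \ref{lto} for the rows whose symplectic representative vanishes at $0$. Distinctness of orbits with coincident numerical invariants is settled by Theorem \ref{pmqdp}: the representatives $[\omega_i]_g$ have different minimum quasi-degree parts (or non-proportional ones), so they cannot be diffeomorphic. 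Finally, to prove that $c,c_1,c_2$ are moduli, for each normal form I would compute the tangent space to its orbit as the span of $\mathcal{L}_{X_s}[\omega_i]_g$ and verify that the basis elements multiplying the moduli parameters do not lie in this tangent space, exactly as in the last paragraph of the proof of Theorem \ref{cla4,5,6,7}.
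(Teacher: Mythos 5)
Your proposal follows essentially the same route as the paper, which itself proves Theorem \ref{class4,5,6} by repeating the $(4,5,6,7)$ template: a Singular-computed basis (Proposition \ref{base4,5,6}), the table of infinitesimal actions of the liftable fields $X_s$, reduction via Theorem \ref{teo6.3} after normalizing the leading coefficient by a dilation, a Moser-homotopy lemma (Lemma \ref{a10cut}) for the one exceptional reduction, and then the same realization, invariant-computation, and moduli arguments. The only slip is your predicted shape of the basis: the actual basis contains elements such as $[x_2\,dx_1\wedge dx_3+x_1\,dx_2\wedge dx_3]_g$, $[x_3\,dx_2\wedge dx_3]_g$ and $[x_2^2\,dx_1\wedge dx_2]_g$ of quasi-degree up to $19$, not only $[dx_i\wedge dx_j]_g$ and $[x_1\,dx_i\wedge dx_j]_g$, but since you compute the basis algorithmically this does not affect the argument.
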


\begin{table}[H]
\caption{Normal forms in $\RR^{2n}$}
\label{4,5,6}
\[
\begin{array}{c l c c c }
\hline
& \text{Normal forms of }\, f & \mu_{sympl}(f) & \iota (f) & Lt(f)\\
\hline
1 & t\to(t^4,t^5+c_1t^6,t^6,c_2t^5,0,\ldots,0) & 2 & 0 & 5 \\

2 & t\to(t^4,\pm t^6+c_1t^9,t^5,c_2t^6,0\ldots,0), \,\ c_1\neq 0 & 3 & 0 & 6 \\

3 & t\to (t^4,\pm t^6+c_1t^9,t^5,c_2t^{12},0,\ldots,0) & 4 & 0 & 6 \\

4 & t\to (t^4,c_1t^9+c_2t^{10},t^5,t^6,0,\ldots,0) & 4 & 0 & 6 \\

5 & t\to (t^4,t^9+c_1t^{10},t^5,0,t^6,c_2t^9,0,\ldots,0) & 5 & 1 & 9 \\

6 & t\to (t^4,\pm t^{10},t^5,c_1t^{12},t^6,c_2t^9,0,\ldots,0) & 6 & 1 & 10 \\

7 & t\to(t^4,0,t^5,0,t^6,-t^9+ct^{11},0,\ldots,0) & 6 & 1 & 11 \\

8 & t\to(t^4,0,t^5,ct^{14},t^6,-t^{11},0,\ldots,0) & 7 & 1 & 13 \\

9 & t\to (t^4,0,t^5,-t^{14},t^6,0,\ldots,0) & 7 & 2 & 15 \\

10 & t\to (t^4,0,t^5,0,t^6,0,\ldots,0) & 8 & \infty & \infty \\
\hline
\end{array}
\]
\end{table}

Let $g:(\RR,0)\to (\RR^{2n},0)$ given by $g(t)=(t^4,t^5,t^6,0,\ldots,0)$.
The proof of the next Proposition is similar to the proof of Proposition \ref{basis4,5,6,7}.
%

\begin{proposition}\label{base4,5,6}
A basis of the vector space of algebraic restrictions of closed 2-forms to $g$
is given by:
\[
\begin{array}{l l}
a_9=[dx_1\wedge dx_2]_g & a_{10}=[dx_1\wedge dx_3]_g\\[0.2cm]
a_{11}=[dx_2\wedge dx_3]_g & a_{13}=[x_1dx_1\wedge dx_2]\\[0.2cm]
a_{14}=[x_1dx_1\wedge dx_3]_g & a_{15}=[x_2dx_1\wedge dx_3+x_1dx_2\wedge dx_3]_g\\[0.2cm]
a_{17}=[x_3dx_2\wedge dx_3]_g & a_{19}=[x_2^{2}dx_1\wedge dx_2]_g
\end{array}
\]
\end{proposition}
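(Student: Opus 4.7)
The plan is to mirror the Singular-based computation used in the proof of Proposition \ref{basis4,5,6,7}. First, by Proposition \ref{prop3.6}(a), it suffices to work in the minimal smooth submanifold $(\RR^3,0)\supset \im(g)$ with coordinates $(x_1,x_2,x_3)$ and weights $(4,5,6)$, since extending by zero from $\RR^3$ to $\RR^{2n}$ does not alter the algebraic-restriction quotient. By Theorem \ref{corteo2} the space of algebraic restrictions of closed 2-forms is then finite-dimensional and generated by quasi-homogeneous representatives of bounded quasi-degree, so the computation can be carried out degree by degree.

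Next I would adapt the Singular script of Proposition \ref{basis4,5,6,7} by dropping the variable $x_4$ and its differential $dx_4$, and replacing the input ideal by $\langle x_1-t^4,x_2-t^5,x_3-t^6\rangle$. This computes $I(g)\subset \RR[x_1,x_2,x_3]$ by $t$-elimination, assembles
\[
\mathscr{A}_0^2(\im(g),\RR^3)=I(g)\cdot \Omega^2+d\bigl(I(g)\cdot \Omega^1\bigr),
\]
and for each quasi-degree $\delta$ returns a normal-form list of the quasi-homogeneous 2-forms of degree $\delta$ modulo $\mathscr{A}_0^2$. Iterating through $\delta=9,10,\ldots$ until the quotient stabilises to zero should produce exactly the eight classes $a_9,a_{10},a_{11},a_{13},a_{14},a_{15},a_{17},a_{19}$ as a basis of the algebraic restrictions of arbitrary 2-forms.

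To then apply Proposition \ref{basisclo} and conclude that the same list spans the closed-form quotient, I would verify directly that every listed representative is already closed as a form on $\RR^3$: the forms $x_1\,dx_1\wedge dx_j$, $x_3\,dx_2\wedge dx_3$ and $x_2^2\,dx_1\wedge dx_2$ are closed by anti-commutativity, and
\[
d(x_2\,dx_1\wedge dx_3+x_1\,dx_2\wedge dx_3)=(-1+1)\,dx_1\wedge dx_2\wedge dx_3=0.
\]
Hence condition (1) of Proposition \ref{basisclo} is satisfied by the entire list, nothing is discarded, and the eight classes form a basis of the algebraic restrictions of closed 2-forms.

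The delicate step is the degree-$15$ piece. Three monomial 2-forms $x_3\,dx_1\wedge dx_2$, $x_2\,dx_1\wedge dx_3$ and $x_1\,dx_2\wedge dx_3$ all have quasi-degree $15$, and the syzygy $d((x_2^2-x_1x_3)dx_2)=-x_3\,dx_1\wedge dx_2+x_1\,dx_2\wedge dx_3\in \mathscr{A}_0^2(\im(g),\RR^3)$ cuts the quotient down to two dimensions, of which only a one-dimensional subspace is represented by closed 2-forms, namely the span of the representative of $a_{15}$. Ensuring that these syzygy-driven reductions are handled correctly at every relevant degree is the main bookkeeping obstacle, and it is precisely what the Singular algorithm automates; completeness of the basis then amounts to checking that the output stabilises at the expected upper bound $K(g)=19$.
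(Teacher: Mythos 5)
Your overall strategy is the one the paper itself uses: its proof of Proposition \ref{base4,5,6} is literally ``run the Singular algorithm of Proposition \ref{basis4,5,6,7} with $x_4$ deleted and input ideal $\langle x_1-t^4,x_2-t^5,x_3-t^6\rangle$'', and your reduction to $(\RR^3,0)$ via Proposition \ref{prop3.6} and the degree-by-degree computation modulo $I(g)\cdot\Lambda^2+dI(g)\wedge\Lambda^1$ are exactly in that spirit. Note, however, that the paper's script never computes the quotient by \emph{all} 2-forms: it normalizes the \emph{exact} quasi-homogeneous 2-forms \texttt{extder(m*dm)}, \texttt{extder(m\^{}2*dm)}, \dots\ against the zero-restriction module, so its output is already the space of algebraic restrictions of closed 2-forms and the issue below never arises there.

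The genuine flaw in your write-up is the intermediate claim that $a_9,a_{10},a_{11},a_{13},a_{14},a_{15},a_{17},a_{19}$ form a basis of the algebraic restrictions of \emph{arbitrary} 2-forms, together with the ensuing use of Proposition \ref{basisclo} (``nothing is discarded''). This is inconsistent with your own last paragraph: in quasi-degree $15$ the three monomial forms $x_3dx_1\wedge dx_2$, $x_2dx_1\wedge dx_3$, $x_1dx_2\wedge dx_3$ are subject to the single relation coming from $d\bigl((x_2^2-x_1x_3)dx_2\bigr)$, so the full quotient is $2$-dimensional there and the space of restrictions of all 2-forms is $9$-dimensional, not $8$. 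Proposition \ref{basisclo} must therefore be applied to a nine-element basis containing an extra degree-$15$ generator, say $b=[x_1dx_2\wedge dx_3]_g$, and you must check condition (2) for the discarded element, namely that $db=[dx_1\wedge dx_2\wedge dx_3]_g\neq 0$; this does hold, because every non-zero element of ${\mathscr A}_0^{3}(\im(g),\RR^3)$ has quasi-degree at least $19$ (the ideal $I(g)$ starts in degree $10$ and $dI(g)\wedge\Lambda^2$ starts in degree $10+9=19$), while $dx_1\wedge dx_2\wedge dx_3$ has quasi-degree $15$. With this correction -- or by following the paper's algorithm verbatim, which manipulates only exact 2-forms and hence never sees the extra degree-$15$ class -- your argument closes; the stopping criterion should likewise be phrased as in the paper, i.e.\ one verifies that the normal forms of the closed 2-forms of the next coefficient degree already reduce to zero modulo the zero-restriction module, which is what justifies $K(g)=19$ and the completeness of the list.
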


\begin{proposition}\label{classalg456}
Let $a$ be an algebraic restriction to $g$ of a symplectic form in $(\RR^{2n},0)$, where $n\geq 2$. Then $a$ is diffeomorphic to one of the algebraic restrictions to $g$ in the rows 1-3 of Table \ref{symm4,5,6} for $n=2$. If $n\geq 3$ then $a$ is diffeomorphic to one of the algebraic restrictions to $g$ of Table \ref{symm4,5,6}.
\end{proposition}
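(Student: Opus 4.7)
The plan is to mirror the strategy used in the proof of Proposition~\ref{claalg4,5,6,7}, adapted to the semigroup $(4,5,6)$. First I would write a general algebraic restriction of a closed $2$-form in the basis from Proposition~\ref{base4,5,6} as
\[
a=t_1 a_9+t_2 a_{10}+t_3 a_{11}+t_4 a_{13}+t_5 a_{14}+t_6 a_{15}+t_7 a_{17}+t_8 a_{19},
\]
with $t_i\in\RR$, and use Proposition~\ref{mergsymp} to determine which combinations admit a symplectic representative on $(\RR^{2n},0)$ for $n=2$ versus $n\geq 3$. This is what separates the three-row subtable from the full statement.

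Next I would invoke Theorem~\ref{genvect}: since the semigroup is generated by $4,5,6$, the values of $s$ that produce liftable vector fields $X_s$ with $X_s\circ g=t^{s+1}dg/dt$ are exactly the elements of the semigroup, namely $s\in\{0,4,5,6,8,9,10,11,\ldots\}$, subject to $s<K(g)-r$. For each relevant $s$ I would write down $X_s$ explicitly (an analogue of the vector-field list preceding Table~\ref{liederivate}) and then tabulate the Lie derivatives ${\mathcal L}_{X_s} a_j$ on the eight basis elements. This Lie-derivative table is the computational backbone of the argument.

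With the table in hand, the case analysis proceeds by identifying the smallest quasi-degree $r$ for which $a^{(r)}\neq 0$ and splitting on which of $t_1,\ldots,t_8$ is the first non-zero coefficient. In each case I would first normalize the leading coefficient by a diagonal local symmetry $(x_1,x_2,x_3)\mapsto(\lambda^4 x_1,\lambda^5 x_2,\lambda^6 x_3)$, choosing a sign in the even-degree cases, and then apply Theorem~\ref{teo6.3} iteratively to eliminate every higher quasi-degree term whose basis element appears in the image of some ${\mathcal L}_{X_s}$ on the leading term. What survives is precisely the list of normal forms in Table~\ref{symm4,5,6}, and any coefficient that cannot be removed is confirmed to be a modulus by comparing with the tangent space to the orbit, exactly as in the final paragraph of the proof of Theorem~\ref{cla4,5,6,7}.

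The main obstacle will be the special-parameter cases in which the reduction by Theorem~\ref{teo6.3} breaks down because the relevant coefficient in the Lie-derivative table degenerates to zero for particular values of the free parameters, analogous to the values $c_1\in\{-\tfrac{3}{2},-\tfrac{1}{3},-1,\tfrac{12}{7}\}$ that appeared in the $(4,5,6,7)$ classification. To handle them I would use Moser's homotopy method as in Lemma~\ref{lemma4,5,6,7}: connect the algebraic restriction to the target normal form by a path $A_t$, seek a time-dependent liftable vector field $\eta_t=\sum b_i(t) X_i$ satisfying ${\mathcal L}_{\eta_t} A_t=-\tfrac{d}{dt}A_t$, and solve the resulting linear system for the coefficients $b_i(t)$. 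Finally, Proposition~\ref{pmqdp} on proportional minimum quasi-degree parts will be needed to separate any two orbits in Table~\ref{symm4,5,6} that happen to share all three classical invariants $\mu_{sympl}$, $\iota$ and $Lt$.
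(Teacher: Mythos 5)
Your plan reproduces the paper's proof essentially verbatim: the same basis from Proposition~\ref{base4,5,6}, Proposition~\ref{mergsymp} to separate the $n=2$ subcase, the table of Lie derivatives of liftable quasi-homogeneous fields, diagonal rescaling to normalize the leading coefficient, iterated application of Theorem~\ref{teo6.3}, and a Moser-homotopy argument (the paper's Lemma~\ref{a10cut}) for the one exceptional reduction $\pm a_{10}+c_1a_{11}+c_2a_{13}+c_3a_{17}$, $c_1\neq 0$. The only minor slip is the claim that liftable fields $X_s$ occur exactly for $s$ in the semigroup: $s=7$ also lifts (the paper includes $X_7$), but its action on the basis vanishes, so the classification is unaffected.
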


\begin{table}[H]
\caption{Algebraic restrictions on $\RR^{2n}$}
\label{symm4,5,6}
\[
\begin{array}{|l l|}
\hline
{[\om_1]_g}= & a_9+c_1a_{10}+c_2a_{11} \\
\hline
{[\om_2]_g}= & \pm a_{10}+c_1a_{11}+c_2a_{13}, \,\ c_1\neq 0\\

{[\om_3]_g}= & \pm a_{10}+c_1a_{13}+c_2a_{17} \\
\hline
{[\om_4]_g}= & a_{11}+c_1a_{13}+c_2a_{14}\\
\hline
{[\om_5]_g}= & a_{13}+c_1a_{14}+c_2a_{15}\\
\hline
{[\om_6]_g}= & \pm a_{14}+c_1a_{15}+c_2a_{17} \\
\hline
{[\om_7]_g}= & a_{15}+ca_{17} \\
\hline
{[\om_8]_g}= & a_{17}+ca_{19}\\
\hline
{[\om_9]_g}= & a_{19}\\
\hline
[\om_{10}]_g =& 0\\
\hline
\end{array}
\]
\end{table}

\begin{proof}
Due to Theorem \ref{genvect} and Proposition \ref{base4,5,6} it is sufficient to consider liftable vector fields over $g$ of quasi-degree until 10.
\[
\begin{array}{l l}
X_0=E=4x_1\frac{\partial}{\partial x_1}+5x_2\frac{\partial}{\partial x_2}+6x_3\frac{\partial}{\partial x_3} &
X_4=x_1E \\[0.2cm]
X_5=x_2E &
X_6=x_3E \\[0.2cm]
X_7=4x_2x_3\frac{\partial}{\partial x_1}+5x_3^2\frac{\partial}{\partial x_2}+6x_1^2x_2\frac{\partial}{\partial x_3} &
X_8=x_1^2E \\[0.2cm]
X_9=x_1x_2E &
X_{10}=x_1x_3E \\[0.2cm]
\end{array}
\]

\FloatBarrier
\begin{table}[H]
\caption{Infinitesimal action on algebraic restrictions of closed 2-forms to $g$}
\label{liederivate4,5,6}
\[
\begin{array}{|c|c c c c c c c c|}
\hline
{\mathcal L}_{X_i}a_j & a_9 & a_{10} & a_{11} & a_{13} & a_{14} & a_{15} & a_{17} & a_{19}\\
\hline
X_0=E & 9a_9 & 10a_{10} & 11a_{11} & 13a_{13} & 14a_{14} & 15a_{15} & 17a_{17} & 19a_{19}\\
X_4 & 13a_{13} & 14a_{14} & 5a_{15} & -\frac{34}{3}a_{17} & 0 & 57a_{19} & 0 & 0\\
X_5 & 7a_{14} & 10a_{15} & 0 & 0 & 38a_{19} & 0 & 0 & 0 \\
X_6 & 5a_{15} & 0 & 17a_{17} & 19a_{19} & 0 & 0 & 0 & 0 \\
X_7 & 0 & 0 & 0 & 0 & 0 & 0 & 0 & 0\\
X_8 & -\frac{34}{3}a_{17} & 0 & 19a_{19} & 0 & 0 & 0 & 0 & 0\\
X_9 & 0 & 38a_{19} & 0 & 0 & 0 & 0 & 0 & 0 \\
X_{10} & 19a_{19} & 0 & 0 & 0 & 0 &0 & 0 & 0 \\
\hline
\end{array}
\]
\end{table}

Due to Propositions \ref{mergsymp} and \ref{base4,5,6} $a$ can be  written as:
\[
a=t_9a_9+t_{10}a_{10}+t_{11}a_{11}+t_{13}a_{13}+t_{14}a_{14}+t_{15}a_{15}+t_{17}a_{17}+t_{19}a_{19},
\]
where $t_i\in \RR$, $i\in J=\{9,10,11,13,14,15,17,19\}$.

Suppose $t_i\neq 0$, for some $i\in J$ and $t_u=0$, for all $u < i$. Considering the local symmetry $\Psi_i:(\RR^{3},0)\to (\RR^{3},0)$ defined by $\Psi_i(x)=(s_i^{-\frac{4}{i}}x_1,s_i^{-\frac{5}{i}}x_2,s_i^{-\frac{6}{i}}x_3)$, where $s_i=t_i$ if $i$ is odd and $s_i=|t_i|$ if $i$ is even,
we can suppose $t_i=1$ if $i$ is odd and $t_i=\pm 1$ if $i$ is even. By Theorem \ref{teo6.3} we conclude that $a$ is diffeomorphic to one of the following algebraic restrictions: \vspace{0.2cm}

\begin{multicols}{2}
\begin{enumerate}
\item $a_9+c_1a_{10}+c_2a_{11}$
\item $\pm a_{10}+c_1a_{11}+c_2a_{13}+c_3a_{17}$
\item $a_{11}+c_1a_{13}+c_2a_{14}$
\item $a_{13}+c_1a_{14}+c_2a_{15}$
\item $\pm a_{14}+c_1a_{15}+c_2a_{17}$
\item $a_{15}+ca_{17}$
\item $a_{17}+ca_{19}$
\item $a_{19}$
\end{enumerate}
\end{multicols}

If $t_i=0$, for all $i$, then $a=[\om_{10}]_g$. We prove in Lemma \ref{a10cut} that $\pm a_{10}+c_1a_{11}+c_2a_{13}+c_3a_{17}$ is diffeomorphic to $\pm a_{10}+c_1a_{11}+c_2a_{13}$ if $c_1\neq 0$.
\end{proof}

\begin{lemma}\label{a10cut}
Let $a=\pm a_{10}+c_1a_{11}+c_2a_{13}+c_3a_{17}$. If $c_1\neq 0$ then $a$ is diffeomorphic to $\pm a_{10}+{c}_1a_{11}+c_2a_{13}$.
\end{lemma}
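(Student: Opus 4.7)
The plan is to adapt the Moser homotopy method used in Lemma \ref{lemma4,5,6,7}. I would define the one-parameter family
\[
A_t = \pm a_{10} + c_1 a_{11} + c_2 a_{13} + (1-t)c_3 a_{17}, \qquad t\in[0,1],
\]
and seek a family of local symmetries $\Phi_t:(\RR^{2n},0)\to(\RR^{2n},0)$ of $g$ with $\Phi_t^* A_t = a$ and $\Phi_0 = \mathrm{Id}$. Writing $\eta_t$ for the (time-dependent) vector field generating $\Phi_t$ and differentiating the invariance condition reduces the problem to solving the infinitesimal equation
\[
{\mathcal L}_{\eta_t} A_t = c_3 a_{17}
\]
by a quasi-homogeneous liftable vector field $\eta_t$.

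The next step is to produce $\eta_t$ from the liftable vector fields in Table \ref{liederivate4,5,6}. Since the target $a_{17}$ has quasi-degree $17$ and $A_t$ has lowest quasi-degree $10$ from $a_{10}$, I look for operators of quasi-degree $7$ or nearby. The natural ansatz is $\eta_t = b_6(t)X_6 + b_8(t)X_8$: from the table, $X_6$ contributes $17 c_1 a_{17} + 19 c_2 a_{19}$ to ${\mathcal L}_{X_6}A_t$, while $X_8$ contributes only $19 c_1 a_{19}$. Thus the equation reduces to the linear system
\[
17 c_1 b_6(t) = c_3, \qquad 19 c_2 b_6(t) + 19 c_1 b_8(t) = 0,
\]
which, because $c_1 \neq 0$, has the (time-independent) solution $b_6 = c_3/(17c_1)$, $b_8 = -c_2 c_3/(17 c_1^2)$.

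Finally, since $X_6$ and $X_8$ are liftable over $g$ (as recorded in the proof of Proposition \ref{classalg456}), so is $\eta_t = b_6 X_6 + b_8 X_8$; integrating its flow produces a family $\Phi_t$ of local symmetries of $g$ for which $\Phi_1^* A_1 = A_0 = a$. Evaluating at $t=1$ yields $A_1 = \pm a_{10} + c_1 a_{11} + c_2 a_{13}$, establishing the claimed diffeomorphism. The only sensitive point of the argument is to ensure that no extraneous $a_{19}$-term remains after applying $\eta_t$; this is exactly what the second equation in the system guarantees, and it is solvable precisely under the hypothesis $c_1 \neq 0$. So I do not expect a genuine obstacle, and the proof structure mirrors that of Lemma \ref{lemma4,5,6,7} almost verbatim.
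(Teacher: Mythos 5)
Your proposal is correct and follows essentially the same route as the paper: the paper also applies Moser's homotopy method to $A_t=\pm a_{10}+c_1a_{11}+c_2a_{13}+(1-t)c_3a_{17}$ and solves ${\mathcal L}_\eta A_t=c_3a_{17}$ with the very same liftable field $\eta=\frac{c_3}{17c_1}X_6-\frac{c_2c_3}{17c_1^2}X_8$, which is exactly the solution of your linear system. The only difference is presentational: the paper writes down $\eta$ directly and verifies it, whereas you derive it from the $a_{17}$- and $a_{19}$-coefficient equations.
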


\begin{proof}
We use the Moser's homotopy method. Let $A_t=\pm a_{10}+c_1a_{11}+c_2a_{13}+(1-t)c_3a_{17}$, where $t\in [0,1]$. Suppose there exists a family of local symmetries of $g$ $\Phi_t:(\RR^3,0)\to (\RR^3,0)$, $t\in [0,1]$, such that
\begin{equation}\label{456Phi}
\Phi_t^{*}A_t=a, \,\ \Phi_0=Id.
\end{equation}

Let $\eta$ be the vector field satisfying $\frac{d}{dt}\Phi_t=\eta \circ \Phi_t$. Differentiating (\ref{456Phi}) we obtain
\begin{equation}\label{eta456}
{\mathcal L}_{\eta}A_t=c_3a_{17}.
\end{equation}

Consider the vector field $\eta=\frac{c_3}{17c_1}X_6- \frac{c_2c_3}{17c_1^2}X_8$, where $X_6$ and $X_8$ are vector fields described in  Table \ref{liederivate4,5,6}.
Note that $\eta$ satisfies (\ref{eta456}). Then the family of diffeomorphisms $\Phi_t$  preserves $\im(g)$.
\end{proof}

\noindent \textbf{Proof of Theorem \ref{class4,5,6}}

The proof of Theorem \ref{class4,5,6} is similar to the proof of Theorem \ref{cla4,5,6,7}.
%

\begin{flushright}
$\Box$
\end{flushright}


\section{Classification of curves with semigroup $(4,5,7)$}

In this section we classify parameterized curve-germs with semigroup $(4,5,7)$. Again, we follow the steps as in Section 4.

\begin{theorem}\label{class4,5,7}
\begin{description}
	\item[(i)] Let $(\RR^{2n},\om_{0}=\sum dp_i\wedge dq_{i})$ be a symplectic space with canonical coordinates $(p_1,q_1,\ldots,p_n,q_n)$. Then the germ of a curve $f:(\RR,0)\to (\RR^{2n},0)$ with semigroup $(4,5,7)$ is symplectically equivalent to one and only one of the curves in the second column and rows 1-3 of Table \ref{4,5,7} if $n=2$. When $n>2$ $f$ is symplectically equivalent to one and only one of the curves in the second column of Table \ref{4,5,7}.
	
	\item[(ii)] The parameters $c,c_1,c_2$ are moduli.
\end{description}
\end{theorem}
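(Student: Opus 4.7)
The plan is to follow exactly the strategy used for Theorems \ref{cla4,5,6,7} and \ref{class4,5,6}. First, by Proposition \ref{a-eq}, any curve-germ $f$ with semigroup $(4,5,7)$ is $\mathcal A$-equivalent to $g(t)=(t^4,t^5,t^7,0,\ldots,0)$, and by Corollary \ref{corteoa} the symplectic classification of such $f$ is equivalent to the classification of algebraic restrictions of symplectic forms to $g$ modulo the action of local symmetries of $g$. Proposition \ref{mergsymp} guarantees that every algebraic restriction to $g$ of a closed 2-form is realized by a symplectic form on $(\RR^{2n},0)$ for $n$ large enough, with the correct value of $n$ determined by the rank condition.

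Next I would compute a quasi-homogeneous basis of the space of algebraic restrictions of closed 2-forms to $g$, in analogy with Propositions \ref{basis4,5,6,7} and \ref{base4,5,6}. This is done by running the same \texttt{Singular} algorithm as in the proof of Proposition \ref{basis4,5,6,7}, adapted to the ideal $J_0=\langle x_1-t^4,x_2-t^5,x_3-t^7\rangle$ and the weights $(4,5,7)$; the output produces finitely many quasi-homogeneous generators of the form $[x^\alpha dx_i\wedge dx_j]_g$ in bounded quasi-degrees (Theorem \ref{corteo2} guarantees finiteness). I would then list the liftable vector fields $X_s$ such that $X_s\circ g=t^{s+1}dg/dt$ for $s$ ranging over the non-negative $\ZZ_{\geq 0}$-combinations of $4,5,7$ up to $K(g)$, and tabulate the infinitesimal actions $\mathcal L_{X_s}a_r$ on the basis; this table is the key technical ingredient, playing the role of Tables \ref{liederivate} and \ref{liederivate4,5,6}.

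With this data in hand, the classification proceeds in the standard way. Writing an arbitrary algebraic restriction as $a=\sum t_i a_i$, I first locate the smallest index $i_0$ with $t_{i_0}\neq 0$; the quasi-homogeneous scaling $\Psi(x)=(s^{-4/i_0}x_1,s^{-5/i_0}x_2,s^{-7/i_0}x_3)$ (with $s=t_{i_0}$ or $|t_{i_0}|$ depending on parity) is a local symmetry of $g$ and lets me normalize $t_{i_0}$ to $1$ or $\pm 1$. Then Theorem \ref{teo6.3}, applied repeatedly with the Lie derivatives read off from the action table, kills or rescales the higher-quasi-degree components and produces a finite list of candidate normal forms analogous to those in Tables \ref{symm4}, \ref{symm6}, and \ref{symm4,5,6}. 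Some tangent directions will be missed by the liftable vector fields alone; for those branches I would run Moser's homotopy method along a one-parameter path of algebraic restrictions, exactly as in Lemmas \ref{lemma4,5,6,7} and \ref{a10cut}, solving a small linear system in the coefficients of a liftable vector field to remove the unwanted term. Finally, to recover Table \ref{4,5,7} from the table of algebraic restriction normal forms, for each $[\om_i]_g$ I would choose an explicit closed 2-form representative $\om_i$, write it in Darboux form $\om_i=F^*\om_0$ via an elementary linear/quasi-homogeneous change of coordinates $F$, and set the normal-form curve to be $F\circ g$, as illustrated in the final steps of the proof of Theorem \ref{cla4,5,6,7}.

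The residual work is then bookkeeping. The invariants $\mu_{sympl}$, $\iota$, $Lt$ follow directly from Propositions \ref{sympm}, \ref{sympi}, and \ref{lto} applied to each normal form. To show that $c,c_1,c_2$ are moduli I would, for each continuous family, compute the tangent space to its diffeomorphism orbit as the span of the $\mathcal L_{X_s}$ applied to the candidate normal form and verify that the basis elements multiplying the moduli parameters lie outside this span, as in the closing paragraph of the proof of Theorem \ref{cla4,5,6,7}. Any coincidences of the discrete invariants $(\mu_{sympl},\iota,Lt)$ between distinct rows are resolved by Theorem \ref{pmqdp}: the minimum-quasi-degree parts of the corresponding algebraic restrictions are not proportional. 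The main obstacle I anticipate is the combinatorial case analysis for the middle rows of Table \ref{4,5,7} (curves 4--7), where several basis elements coexist in the same quasi-degree window and where the direct application of Theorem \ref{teo6.3} leaves a residual term; handling each of these by an ad hoc Moser argument, and verifying that the exceptional values of the moduli (where extra liftable vector fields appear) are correctly excluded, is the most delicate part of the argument.
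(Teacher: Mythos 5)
Your proposal follows essentially the same route as the paper: reduce to $g(t)=(t^4,t^5,t^7,0,\ldots,0)$, compute the quasi-homogeneous basis of algebraic restrictions (Proposition \ref{base4,5,7}) and the table of infinitesimal actions of liftable vector fields, normalize with scalings and Theorem \ref{teo6.3}, then read off the normal-form curves, invariants, and moduli, distinguishing coincident-invariant orbits via Theorem \ref{pmqdp}. The only minor difference is that in this case the paper needs no Moser-type lemma at all (Theorem \ref{teo6.3} already yields Table \ref{symm4,5,7}), so the extra homotopy step you anticipate turns out to be unnecessary but harmless.
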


\begin{table}[H]
\caption{Normal forms in $\RR^{2n}$}
\label{4,5,7}
\[
\begin{array}{c l c c c }
\hline
& \text{Normal forms of}\, f & \mu_{sympl}(f) & \iota (f) & Lt(f)\\
\hline
1 & t\to(t^4,t^5+ct^7,t^7,0,\ldots,0) & 1 & 0 & 5 \\

2 & t\to(t^4,t^7+c_1t^{9},t^5,c_2t^7,0\ldots,0) & 3 & 0 & 7 \\

3 & t\to (t^4,c_1t^9,t^5,\pm t^7+c_2t^9,0,\ldots,0) & 4 & 0 & 7 \\

4 & t\to (t^4,t^9+c_1t^{11},t^5,c_2t^9,t^6,0,\ldots,0) & 5 & 1 & 9 \\

5 & t\to (t^4,c_1t^{11},t^5,\pm t^9,t^7,c_2t^9,0,\ldots,0) & 6 & 1 & 10 \\

6 & t\to (t^4,t^{11},t^5,c_1t^{12},t^7,c_2t^9,0,\ldots,0) & 7 & 1 & 11 \\

7 & t\to(t^4,0,t^5,c_1t^{12},t^7,\pm t^9+c_2t^{11},0,\ldots,0) & 8 & 1 & 11 \\

8 & t\to(t^4,ct^{14},t^5,t^{12},t^{7},0,\ldots,0) & 8 & 1 & 13 \\

9 & t\to(t^4,0,t^5,0,t^7,\pm t^{11},0,\ldots,0) & 8 & 1 & 14 \\

10 & t\to (t^4,0,t^5,0,t^7,0,\ldots,0) & 9 & \infty & \infty \\
\hline
\end{array}
\]
\end{table}

Let $g:(\RR,0)\to (\RR^{2n},0)$ given by $g(t)=(t^4,t^5,t^7,0,\ldots,0)$.
%

\begin{proposition}\label{base4,5,7}
A basis of the vector space of algebraic restrictions of closed 2-forms to $g$
is given by:: \vspace{0.3cm}
\[
\begin{array}{l l}
a_9=[dx_1\wedge dx_2]_g & a_{11}=[dx_1\wedge dx_3]_g\vspace{0.2cm} \\
a_{12}=[dx_2\wedge dx_3]_g & a_{13}=[x_1dx_1\wedge dx_2]\vspace{0.2cm} \\
a_{14}=[x_2dx_1\wedge dx_2]_g & a_{15}=[x_1dx_1\wedge dx_3]_g\vspace{0.2cm} \\
a_{16}=[x_2dx_1\wedge dx_3+x_1dx_2\wedge dx_3]_g & a_{17}=[x_2dx_2\wedge dx_3]_g\vspace{0.2cm} \\
a_{18}=[x_3dx_1\wedge dx_3]_g &
\vspace{0.2cm}
\end{array}
\]
\end{proposition}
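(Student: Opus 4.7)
The approach will mirror the proof of Proposition \ref{basis4,5,6,7}: reduce the ambient dimension, identify the ideal of the curve and the module of zero algebraic restrictions, and read off the basis from a Singular computation quasi-degree by quasi-degree.

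First, by Proposition \ref{prop3.6}(a) it suffices to work in the minimal smooth ambient $(\RR^3,0)$ containing $\im(g)$, with $g(t)=(t^4,t^5,t^7)$. By Theorem \ref{corteo2} together with Lemma \ref{lemma6.2}, every algebraic restriction of a closed $2$-form is a $\KK$-linear combination of algebraic restrictions of quasi-homogeneous monomial $2$-forms $p(x)\,dx_i\wedge dx_j$ of bounded quasi-degree, and within each quasi-degree $r$ and each fixed pair $(i,j)$ any two such monomials yield the same algebraic restriction. The set of candidate generators is therefore finite and indexed by triples $(i,j,r)$ with $r-\lambda_i-\lambda_j\in\langle 4,5,7\rangle$.

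Next I would determine $I(g)\subset \KK[x_1,x_2,x_3]$ so as to describe $\mathscr{A}^2_0(\im(g),\RR^3)=I(g)\cdot\Omega^2+dI(g)\wedge\Omega^1$. Inspection of the numerical semigroup $\langle 4,5,7\rangle$ shows that multiplicities first exceed one at quasi-degrees $12,14,15$, producing the three minimal quasi-homogeneous generators
\[
x_1^3-x_2x_3,\qquad x_3^2-x_1x_2^2,\qquad x_1^2x_3-x_2^3.
\]
This is exactly the input needed by the Singular routine of Proposition \ref{basis4,5,6,7}: replacing the parametrization ideal \texttt{J0} by \texttt{x1-t\^{}4,\,x2-t\^{}5,\,x3-t\^{}7} (and dropping \texttt{x4}), the rest of the algorithm applies unchanged and outputs a basis of the space of algebraic restrictions of closed $2$-forms to $g$ in each quasi-degree up to the effective bound.

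Finally, I would apply Proposition \ref{basisclo} to extract the closed part: of the nine candidates $a_9, a_{11}, a_{12}, a_{13}, a_{14}, a_{15}, a_{16}, a_{17}, a_{18}$, each is visibly closed (the only case requiring a computation is $da_{16}=-dx_1\wedge dx_2\wedge dx_3+dx_1\wedge dx_2\wedge dx_3=0$), so they already form a closed system and Proposition \ref{basisclo} confirms they are a basis of the closed subspace. The main obstacle is proving that no further generators appear in quasi-degree $>18$: this amounts to bounding $K(g)$. I would handle it by checking, via the same algorithm extended one or two quasi-degrees past $18$, that every quasi-homogeneous closed $2$-form of higher quasi-degree lies in $\mathscr{A}^2_0$ (the relations $x_1^3\equiv x_2x_3$, $x_3^2\equiv x_1x_2^2$, $x_1^2 x_3\equiv x_2^3$ modulo $I(g)$ force monomial $2$-forms of sufficiently high quasi-degree to reduce to zero algebraic restrictions), which certifies completeness of the list.
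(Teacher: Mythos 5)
Your proposal is correct and follows essentially the same route as the paper: the paper obtains this basis by rerunning the Singular algorithm of Proposition \ref{basis4,5,6,7} with the parametrization ideal of $g(t)=(t^4,t^5,t^7)$ (equivalently the ideal generated by $x_1^3-x_2x_3$, $x_3^2-x_1x_2^2$, $x_1^2x_3-x_2^3$, which you correctly identify), reducing quasi-homogeneous exact $2$-forms modulo the zero algebraic restrictions degree by degree and checking that higher quasi-degrees contribute nothing. Your closedness verifications (including $da_{16}=0$) and the reduction to $(\RR^3,0)$ via Proposition \ref{prop3.6} match the paper's implicit argument, so no further comment is needed.
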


\begin{proposition}\label{classalg457}
Let $a$ be an algebraic restriction to $g$ of a symplectic form in $(\RR^{2n},0)$, where $n\geq 2$. Then $a$ is diffeomorphic to one of the algebraic restrictions to $g$ in the rows 1-3 of the Tables \ref{symm4,5,7}, for $n=2$. If $n>2$ then $a$ is diffeomorphic to one of the algebraic restrictions to $g$ of the Table \ref{symm4,5,7}.
\end{proposition}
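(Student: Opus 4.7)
The plan is to follow the template established by Propositions \ref{claalg4,5,6,7} and \ref{classalg456}, adapted to the semigroup $(4,5,7)$. First, I would apply Theorem \ref{genvect} with $\lambda_1=4$, $\lambda_2=5$, $\lambda_3=7$: the relevant values of $s$ are the non-negative integer combinations of $4,5,7$ that are less than $K(g)-r$ for each quasi-degree $r$ occurring in the basis. Since the highest basis quasi-degree is $18$, I would list liftable vector fields $X_s$ satisfying $X_s\circ g=t^{s+1}dg/dt$ of quasi-degrees up to roughly $9$, beginning with the Euler field $X_0=4x_1\partial_{x_1}+5x_2\partial_{x_2}+7x_3\partial_{x_3}$ and continuing with $X_4, X_5, X_7, X_8, X_9$, etc., writing each $X_s$ explicitly via the relation $X_s\circ g = t^{s+1}g'(t)$.

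Next I would assemble an infinitesimal action table, analogous to Tables \ref{liederivate} and \ref{liederivate4,5,6}, recording $\mathcal L_{X_s}a_j$ for $a_j$ ranging over the nine basis elements of Proposition \ref{base4,5,7}. Each entry is a linear combination of basis elements of the appropriate quasi-degree, computed using Lemma \ref{lemma6.2} to reduce products of $x$-monomials modulo $\mathscr A_0^2(\text{Im}(g),\RR^3)$. By Proposition \ref{mergsymp} every algebraic restriction of a symplectic form can be written as
\[
a=t_9a_9+t_{11}a_{11}+t_{12}a_{12}+t_{13}a_{13}+t_{14}a_{14}+t_{15}a_{15}+t_{16}a_{16}+t_{17}a_{17}+t_{18}a_{18}.
\]
I would then split into cases based on the minimal index $i\in J=\{9,11,12,13,14,15,16,17,18\}$ with $t_i\neq 0$. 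For each such $i$, a weighted scaling local symmetry $\Psi_i(x)=(s_i^{-4/i}x_1,s_i^{-5/i}x_2,s_i^{-7/i}x_3)$, where $s_i=t_i$ if $i$ is odd and $s_i=|t_i|$ if $i$ is even, normalizes the leading coefficient to $1$ or $\pm 1$. Iterated application of Theorem \ref{teo6.3}, using the action table, then eliminates each higher-quasi-degree term that lies in the orbit of the leading term, yielding the candidate normal forms $[\om_1]_g,\ldots,[\om_{10}]_g$ of Table \ref{symm4,5,7}.

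The main obstacle will be the cases where Theorem \ref{teo6.3} fails to eliminate a term directly but one still expects it to be removable, analogous to the situation of Lemma \ref{a10cut} and Lemma \ref{lemma4,5,6,7}. For these I would invoke Moser's homotopy method: set up a family $A_t$ linearly interpolating between the two candidate normal forms, seek a time-dependent liftable vector field $\eta_t=\sum b_j(t)X_{s_j}$ solving $\mathcal L_{\eta_t}A_t=A_0-A_1$, and check that the resulting linear system in the $b_j(t)$ admits a smooth solution on $[0,1]$; liftability of $\eta_t$ then guarantees that the integrating diffeomorphisms preserve $\text{Im}(g)$. Finally, to confirm irredundancy and the claim in part (ii) of Theorem \ref{class4,5,7} that $c,c_1,c_2$ are moduli, I would compute, for each normal form, the tangent space to its orbit by spanning $\mathcal L_{X_s}[\om_k]_g$ over liftable $X_s$, and verify that the coefficients of the parameters correspond to basis elements outside this tangent space; Theorem \ref{pmqdp} together with the symplectic invariants $\mu_{\text{sympl}}$, $\iota$, $Lt$ will be used to separate orbits whose leading quasi-degrees coincide.
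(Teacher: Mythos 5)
Your plan follows the paper's proof of this proposition almost verbatim: list the quasi-homogeneous liftable vector fields, tabulate the infinitesimal actions ${\mathcal L}_{X_s}a_j$ on the basis of Proposition \ref{base4,5,7}, write $a=\sum t_ia_i$, split into cases according to the minimal $i$ with $t_i\neq 0$, normalize the leading coefficient by the weighted scaling $\Psi_i$, and reduce with Theorem \ref{teo6.3}. There is, however, one concrete gap: your selection rule for the admissible quasi-degrees $s$ (``non-negative integer combinations of $4,5,7$'') is too narrow and discards exactly the fields the reduction cannot do without. The correct criterion is that $t^{s+1}dg/dt$ extends to a vector field on the ambient space, i.e.\ that $s+\lambda_i$ lies in the semigroup for every $i$; for $(4,5,7)$ this admits $s=3$ and $s=6$, which are not themselves $\ZZ_{\geq 0}$-combinations of $4,5,7$. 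The corresponding fields, $X_3=4x_3\frac{\partial}{\partial x_1}+5x_1^2\frac{\partial}{\partial x_2}+7x_2^2\frac{\partial}{\partial x_3}$ (since $X_3\circ g=t^4\,dg/dt$) and $X_6=4x_2^2\frac{\partial}{\partial x_1}+5x_1x_3\frac{\partial}{\partial x_2}+7x_1^2x_2\frac{\partial}{\partial x_3}$, appear in the paper's Table \ref{liederivate4,5,7} and are indispensable: for instance, when the leading term is $a_9$ the component $a_{12}$ can only be removed via ${\mathcal L}_{X_3}a_9=-4a_{12}$ and $a_{15}$ only via ${\mathcal L}_{X_6}a_9=5a_{15}$, and when the leading term is $a_{11}$ you need ${\mathcal L}_{X_3}a_{11}=14a_{14}$ and ${\mathcal L}_{X_6}a_{11}=\frac{17}{3}a_{17}$. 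With only $s\in\{0,4,5,7,8,9\}$ these terms survive and you never reach the normal forms of Table \ref{symm4,5,7} (no Moser argument can rescue this, since the homotopy method also needs a liftable field producing the offending quasi-degree).

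Two smaller remarks. First, once $X_3$ and $X_6$ are included, Theorem \ref{teo6.3} alone completes every case for $(4,5,7)$; unlike Lemma \ref{lemma4,5,6,7} and Lemma \ref{a10cut} for the other semigroups, no Moser-type lemma is actually needed here, so that part of your plan is an unnecessary (though harmless) contingency. Second, the verification that $c,c_1,c_2$ are moduli and the separation of orbits via $\mu_{sympl}$, $\iota$, $Lt$ and Proposition \ref{pmqdp} belong to the proof of Theorem \ref{class4,5,7}, not to this proposition, whose statement only asserts that $a$ is diffeomorphic to some entry of Table \ref{symm4,5,7}.
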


\begin{table}[H]
\caption{Algebraic restrictions in $\RR^{2n}$}
\label{symm4,5,7}
\[
\begin{array}{|l l|}
\hline
{[\om_1]_g}= & a_9+ca_{11} \\
\hline
{[\om_2]_g}= & a_{11}+c_1a_{12}+c_2a_{13} \\
\hline
{[\om_3]_g}= & \pm a_{12}+c_1a_{13}+c_2a_{14} \\
\hline
{[\om_4]_g}= & a_{13}+c_1a_{14}+c_2a_{15} \\
\hline
{[\om_5]_g}= & \pm a_{14}+c_1a_{15}+c_2a_{16} \\
\hline
{[\om_6]_g}= & a_{15}+c_1a_{16}+c_2a_{17} \\
\hline
{[\om_7]_g}= & \pm a_{16}+c_1a_{17}+c_2a_{18} \\
\hline
{[\om_8]_g}= & a_{17}+ca_{18} \\
\hline
{[\om_9]_g}= & \pm a_{18}\\
\hline
[\om_{10}]_g= & 0\\
\hline
\end{array}
\]
\end{table}

\begin{proof}
Due to Theorem \ref{genvect} and Proposition \ref{base4,5,7}, it is sufficient to consider liftable vector fields over $g$ of quasi-degree until 9.
\[
\begin{array}{l l}
X_0=E=4x_1\frac{\partial}{\partial x_1}+5x_2\frac{\partial}{\partial x_2}+7x_3\frac{\partial}{\partial x_3} & \vspace{0.2cm} \\
X_3=4x_3\frac{\partial}{\partial x_1}+5x_1^2\frac{\partial}{\partial x_2}+7x_2^2\frac{\partial}{\partial x_3} & \vspace{0.2cm} \\
X_4=x_1E &
X_5=x_2E \vspace{0.2cm} \\
X_6=4x_2^2\frac{\partial}{\partial x_1}+5x_1x_3\frac{\partial}{\partial x_2}+7x_1^2x_2\frac{\partial}{\partial x_3} & \vspace{0.2cm} \\
X_7=x_3E &
X_8=x_1^2E \vspace{0.2cm}  \\
X_9=x_1x_2E & \vspace{0.2cm}  \\
\end{array}
\]

\FloatBarrier
\begin{table}[H]
\caption{Infinitesimal action on algebraic restrictions of closed 2-forms to $g$}
\label{liederivate4,5,7}
\[
\begin{array}{|c|c c c c c c c c c|}
\hline
{\mathcal L}_{X_i}a_j & a_9 & a_{11} & a_{12} & a_{13} & a_{14} & a_{15} & a_{16} & a_{17} & a_{18}\\
\hline
X_0=E & 9a_9 & 11a_{11} & 12a_{12} & 13a_{13} & 14a_{14} & 15a_{15} & 16a_{16} & 17a_{17} & 18a_{18}\\
X_3 & -4a_{12} & 14a_{14} & 10a_{15} & -4a_{16} & -\frac{17}{3}a_{17} & 18a_{18} & 0 & 0 & 0\\
X_4 & 13a_{13} & 15a_{15} & 12a_{16} & -\frac{17}{3}a_{17} & 18a_{18} & 0 & 0 & 0 & 0\\
X_5 & 14a_{14} & 4a_{16} & 17a_{17} & 18a_{18} & 0 & 0 & 0 & 0 & 0 \\
X_6 & 5a_{15} & \frac{17}{3}a_{17} & -9a_{18} & 0 & 0 & 0 & 0 & 0 & 0 \\
X_7 & -4a_{16} & 18a_{18} & 0 & 0 & 0 & 0 & 0 & 0 & 0\\
X_8 & -\frac{17}{3}a_{17} & 0 & 0 & 0 & 0 & 0 & 0 & 0 & 0\\
X_9 & 18a_{18} & 0 & 0 & 0 & 0 & 0 & 0 & 0 & 0\\
\hline
\end{array}
\]
\end{table}

Let $a$ be an algebraic restriction to $g$ of a symplectic form on $\RR^{2n}$, $n\geq 2$. Due to Propositions \ref{mergsymp} and \ref{base4,5,7}, $a$ can be written as
\[
a=t_9a_9+t_{11}a_{11}+t_{12}a_{12}+t_{13}a_{13}+t_{14}a_{14}+t_{15}a_{15}+t_{16}a_{16}+t_{17}a_{17}+t_{18}a_{18}.
\]
$t_i\in \RR$, for all $i\in \{ 9,11,12,13,14,15,16,17,18\}$.

Suppose $t_i\neq 0$, for some $i$. In this case, we consider the local symmetry $\Psi_i:(\RR^{3},0)\to (\RR^{3},0)$ defined by $\Psi_i(x)=(s_i^{-\frac{4}{i}}x_1,s_i^{-\frac{5}{i}}x_2,s_i^{-\frac{7}{i}}x_3)$, where $s_i=t_i$ if $i$ is odd, and $s_i=|t_i|$ if $i$ is even. Thus we can suppose $t_i=1$ if $i$ is odd or $t_i=\pm 1$ if $i$ is even. Applying the Theorem \ref{teo6.3} we conclude that $a$ is diffeomorphic to one of the algebraic restrictions in the rows 1-9 of the Table \ref{symm4,5,7}. If $t_i=0$, for all $i$, then $a=[\om_{10}]_g$.
\end{proof} \vspace{0.3cm}

\noindent \textbf{Proof the Theorem \ref{class4,5,7}}

The proof of this Theorem is similar to the proof of Theorem \ref{cla4,5,6,7}.

\begin{flushright}
$\Box$
\end{flushright}


\end{document}